\newfont{\cyrr}{wncyr10}
\newcommand{\N}{{\mathbb N}}
\newcommand{\Z}{{\mathbb Z}}
\newcommand{\Q}{{\mathbb Q}}
\newcommand{\R}{{\mathbb R}}
\newcommand{\C}{{\mathbb C}}
\renewcommand{\P}{{\mathbb P}}
\renewcommand{\mod}{{\, \rm mod \, }}
\newtheorem{thm}{Theorem}
\newtheorem{lem}[thm]{Lemma}
\newtheorem{cor}[thm]{Corollary}
\newtheorem{prop}[thm]{Proposition}
\newtheorem{rmk}[thm]{Remark} 
\newtheorem{conj}[thm]{Conjecture}
\newtheorem{exmpl}[thm]{Example}
\def\ndiv{{\kern3pt | \kern-6pt - \kern1pt}}
\begin{document}

\title[Mahler]{ On two problems of Hardy and  Mahler}

\author{Patrice Philippon and Purusottam Rath}

\subjclass[2010]{    11J87,  11R06 }

\keywords{ Distribution of powers mod $1$, Subspace theorem }

\begin{abstract} 
It is a classical  result of Mahler that for any  rational number $\alpha > 1$ which is not an integer and any real $ 0 < c <1$, the set of positive integers  $n$ such that $\Vert\alpha^n\Vert < c^{n}$ is necessarily finite. Here for any real $x$, $\Vert x\Vert$ denotes
the distance from its nearest integer.  The problem of classifying
all real algebraic numbers greater than one exhibiting the above phenomenon was 
suggested by Mahler. This was solved by a beautiful work of
Corvaja and Zannier.  On the other hand, for non-zero real numbers  $\lambda$
and  $\alpha$  with  $\alpha >1$, Hardy  about a century ago asked \\
$\phantom{mamma}$ ``In what circumstances can it be true that $\Vert \lambda \alpha^n\Vert  \to 0$ as $n\to \infty$? " \\
This question is still open in general.  In this note, we study its analogue in
the context of the problem of Mahler. We first compare and contrast with what
is known vis-a-vis the original question of Hardy. We then suggest a number of questions that arise as natural consequences of our investigation. Of these questions, we answer one and offer some insight into others.
\end{abstract}

\maketitle   

\section{introduction}
Throughout the paper, $\N$ and $\N^{\times}$  denote the set of non-negative integers
and positive integers respectively. Further for any real $x$,  $\Vert x\Vert$ denotes
its distance from its nearest integer. 
In other words,
$$
\Vert x\Vert  := {\rm min} \{ |x - m| : m \in \Z\}.
$$
The growth of the sequence $\Vert (3/2)^n\Vert $
is intricately linked to the famous Waring's problem.
For  a positive integer $k$, let 
$g(k)$ denote the minimum number $s$ such that
every positive integer is expressible as a sum of $s$
$k$-th powers of elements in $\N$.  It is known that  for $ k \geq 6$,
$$
g(k) = 2^k + [(3/2)^k] -2
$$
if $\Vert (3/2)^k\Vert   \geq (3/4)^k$.

This was the motivation for
Mahler \cite{mahler} in 1957 to prove that  for any  $\alpha \in \Q\setminus \Z$, $\alpha>1$,
and any real $ 0 < c <1$, the set of $n \in \N$ such that
$\Vert \alpha^n\Vert  < c^{n}$ is finite.  Mahler uses  Ridout's theorem
which is a $p$-adic extension of the famous theorem of Roth
that algebraic irrationals have irrationality measure two.

Mahler ends his  paper  suggesting  that
it would be of some interest to know which algebraic numbers have the same property 
as the non integral rationals, that is to  characterise  real algebraic numbers $\alpha > 1$ such
that  for any real $ 0 < c <1$, the set of $n \in \N$ such that
$\Vert \alpha^n\Vert  < c^{n}$ is finite.

This was answered completely by Corvaja and Zannier \cite{cz} who proved the following:
\begin{thm}\label{czthm} {\bf  ( Corvaja and Zannier ) }
Let $\alpha > 1$ be a real algebraic number. Then for  some  $ 0 < c < 1$,
$\Vert \alpha^n\Vert  < c^{n}$ for infinitely many $n \in \N$
 if and only if  there exists 
a positive integer $s$ such that $\alpha^s$ is a PV number.
In particular, $\alpha$ is an algebraic integer.
\end{thm}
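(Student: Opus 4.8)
\emph{Strategy.} The equivalence has an easy direction and a hard one. The ``if'' direction is elementary. Suppose $\alpha^s$ is a PV number for some $s\ge 1$; put $\beta=\alpha^s$ and let $\beta=\beta_1,\dots,\beta_d$ be its conjugates, so that $\theta:=\max_{2\le i\le d}|\beta_i|<1$. For every $m\in\N$ the trace $\mathrm{Tr}(\beta^m)=\sum_{i}\beta_i^m$ is a rational integer, whence $\Vert\alpha^{sm}\Vert=\Vert\beta^m-\mathrm{Tr}(\beta^m)\Vert\le\big|\sum_{2\le i\le d}\beta_i^m\big|\le(d-1)\theta^m$. Thus, taking $n=sm$, we get $\Vert\alpha^n\Vert\le(d-1)\,(\theta^{1/s})^{\,n}$, which is $<c^n$ for all large $n$ of the form $sm$ as soon as $\theta^{1/s}<c<1$; so $\Vert\alpha^n\Vert<c^n$ for infinitely many $n$.

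For the converse, assume $\Vert\alpha^n\Vert<c^n$ for all $n$ in an infinite set $\mathcal N$, and write $\alpha^n=p_n+\delta_n$ with $p_n\in\Z$ and $|\delta_n|<c^n$; note $|p_n|=\alpha^n(1+o(1))$. Work in $K=\Q(\alpha)$, with embeddings $\sigma_1=\mathrm{id},\dots,\sigma_d$, conjugates $\alpha_i=\sigma_i(\alpha)$ (so $\alpha_1=\alpha$ and the attached place $\infty_1$ is real), and let $S$ be the finite set of places of $K$ consisting of all archimedean places together with all non-archimedean places $v$ with $|\alpha|_v\ne1$; then $\alpha$ is an $S$-unit and $p_n\in\mathcal O_S$. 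I would apply the Subspace Theorem of Schmidt--Schlickewei to the points $\mathbf x_n:=(\alpha^n,p_n)\in\mathcal O_S^2$ --- or, since two coordinates turn out not to suffice, to a higher-dimensional point built from $\alpha^n$, $p_n$ and auxiliary quantities such as $\alpha^{jn}$ and powers $p_n^{\,j}$. At the place $\infty_1$ one uses, among the prescribed linearly independent forms, the form $X_0-X_1$ (and its analogues), whose value at $\mathbf x_n$ is the exponentially small quantity $\delta_n$; at the other places of $S$ one takes coordinate forms. Using the product formula, the $S$-unit property of $\alpha$, and $|p_n|=\alpha^n(1+o(1))$, the gain of $c^n$ provided at $\infty_1$ is what makes the product $\prod_{v\in S}\prod_{i}|L_{i,v}(\mathbf x_n)|_v$, suitably normalized by the local heights, drop below $H(\mathbf x_n)^{-\varepsilon}$ for a fixed $\varepsilon>0$ and all large $n\in\mathcal N$.

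The Subspace Theorem then confines the $\mathbf x_n$, $n\in\mathcal N$, to finitely many proper subspaces of $K^N$; one such subspace contains $\mathbf x_n$ for infinitely many $n$, producing a fixed nontrivial linear relation among $\alpha^n$, $p_n$ (and the auxiliary quantities) valid along an infinite set of exponents. Since $(\alpha^n)_n$ obeys the order-$d$ linear recurrence attached to the minimal polynomial of $\alpha$, and since $(p_n)_n$ obeys the same recurrence up to a bounded integer error (because $\Vert\alpha^m\Vert\le\tfrac12$ for all $m$), this extra relation forces $\alpha^n$ to satisfy a recurrence supported on a proper subset of the conjugates of $\alpha$. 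Comparing, at the various archimedean places, the growth of $\alpha^n$ with that of $\sigma_i(\delta_n)=\alpha_i^n-p_n$, one deduces that $\alpha$ must strictly dominate all of its conjugates and that the sub-dominant conjugates are ``killed'' by the relation, i.e.\ some power $\alpha^s$ has all conjugates other than itself of absolute value $<1$, and it is an algebraic integer --- a PV number. In particular $\alpha$ itself is an algebraic integer, being a root of $X^s-\alpha^s$.

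The main obstacle is twofold. First, one must choose the dimension $N$ and the exact system of linear forms so that the double-product estimate genuinely beats the height: the naive two-variable choice is insufficient, essentially because $p_n$ could a priori be composed mostly of the primes in $S$, so the auxiliary variables (mixing several exponents $jn$) are needed to create enough room. Second, one must carry out the case analysis on the moduli and arguments of the conjugates $\alpha_i$; in particular one has to exclude conjugates lying on the unit circle (the ``Salem-like'' case), which cannot be disposed of by equidistribution and must instead be ruled out through the Subspace-Theorem relation itself, possibly after replacing $\alpha$ by a suitable power so that its dominant conjugates become real.
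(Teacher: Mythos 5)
The paper does not prove Theorem~\ref{czthm}: it is quoted verbatim as the main result of Corvaja and Zannier's Acta Mathematica article \cite{cz}. So there is no internal proof to compare against; what the paper \emph{does} contain is a closely related application of the same circle of ideas, namely Theorem~\ref{cz2} (a special case of the Main Theorem of \cite{cz}, stating that infinitely many solutions of $0<\Vert\lambda u\Vert<H(u)^{-\epsilon}$ with $u$ in a finitely generated group force $\lambda u$ to be pseudo-PV for all but finitely many such $u$), and the derived statements Lemma~\ref{3} and Theorem~\ref{2}.

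Your ``if'' direction is complete and correct: the trace of $\beta^m$ is a rational integer for an algebraic integer $\beta$, and the bound $(d-1)\theta^m$ yields geometric decay along the progression $n=sm$. For the converse you have correctly identified the engine (the Schmidt--Schlickewei Subspace Theorem with $\alpha$ an $S$-unit, forms of type $X_0-X_1$ picking up the exponentially small gap $\delta_n$, and auxiliary coordinates built from higher powers $\alpha^{jn}$ because two coordinates do not give enough room), and you correctly flag the two real difficulties: getting the double-product to beat the height, and disposing of conjugates on the unit circle. This matches the genuine Corvaja--Zannier strategy. However, the argument as written is a strategy outline rather than a proof: the system of linear forms is not constructed, the height estimate $\prod_{v,i}|L_{i,v}(\mathbf x_n)|_v/\Vert\mathbf x_n\Vert_v < H(\mathbf x_n)^{-\varepsilon}$ is asserted rather than derived, the passage from ``a nontrivial linear relation holds along infinitely many $n$'' to ``some $\alpha^s$ is PV'' is compressed into a single sentence, and the exclusion of the Salem-like case is explicitly left as an obstacle. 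The route taken in \cite{cz}, and mirrored in this paper's proof of Theorem~\ref{2}, is more structured: one first shows (via the Subspace Theorem) that infinitely many $\alpha^n$ are pseudo-PV, then uses a trace argument (Lemma~\ref{3} here) to conclude $\alpha$ is an algebraic integer, then compares conjugates of absolute value $>1$ pairwise to cut down to a power $\alpha^s$ with a single dominant conjugate, and finally rules out the Salem alternative with a second Subspace-Theorem application (Proposition~\ref{propspv}). Your sketch would need each of these stages spelled out before it would count as a proof.
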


We recall that a PV number (for Pisot-Vijayaraghavan number) is a real algebraic integer $\alpha$
such that:
\begin{itemize}
 \item $\alpha >1$;
 \item all its other conjugates (over $\Q$) have absolute values
strictly less than $1$.
\end{itemize}

\smallskip
\noindent

On the other hand,  for non-zero real numbers  $\lambda$
and  $\alpha$  with  $\alpha >1$, Hardy \cite{hardy}  about a century ago asked \\
$\phantom{mamma}$ ``In what circumstances can it be true that $\Vert \lambda \alpha^n\Vert  \to 0$ as $n\to \infty$? " \\
This question is still open in general.  However, when $\alpha$ is further assumed to 
be algebraic, then the question is easier and can be settled as 
was shown by Hardy himself.  More precisely, one has ( Theorem A in \cite{hardy}):

\begin{thm}\label{hardy} {\bf ( Hardy )}
Let $\alpha >1$ be an algebraic  real number and   $\lambda$
be a non-zero real number. Suppose that
 $\Vert \lambda \alpha^n\Vert  \to 0$ as $n\to \infty$. Then
 $\alpha$ is a PV number, hence an algebraic integer.
Further in this case, $\lambda$ necessarily lies in $\Q(\alpha)$
and hence is algebraic.
\end{thm}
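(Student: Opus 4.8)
The plan is to leverage the algebraicity of $\alpha$ together with its conjugates to show that $\alpha$ must be an algebraic integer, then a Pisot number, and finally to pin down $\lambda$. First I would dispose of the case where $\alpha$ is rational but not an integer: writing $\alpha = a/b$ in lowest terms with $b \geq 2$, the numbers $\lambda \alpha^n = \lambda a^n / b^n$ cannot converge to integers because, after multiplying by suitable integers, one sees the denominators $b^n$ force $\Vert \lambda \alpha^n \Vert$ to stay bounded away from $0$ along a subsequence — a short $p$-adic argument with a prime $p \mid b$, or alternatively a direct appeal to Mahler's theorem (Theorem \ref{czthm} with $\lambda$ absorbed, though one must be slightly careful since $\lambda$ is present). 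Once $\alpha$ is an algebraic irrational, let $\alpha = \alpha_1, \alpha_2, \dots, \alpha_d$ be its conjugates over $\Q$, and let $P(X) = X^d + c_{d-1} X^{d-1} + \dots + c_0 \in \Q[X]$ be its minimal polynomial.

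The key step is to exploit the linear recurrence. Set $u_n := \lambda \alpha^n$ and let $\varepsilon_n$ be the nearest integer to $u_n$, so $u_n = \varepsilon_n + \delta_n$ with $\delta_n \to 0$. The sequence $(\varepsilon_n)$ satisfies, approximately, the recurrence attached to $P$: more precisely, I would consider $E_n := \varepsilon_{n+d} + c_{d-1}\varepsilon_{n+d-1} + \dots + c_0 \varepsilon_n$. Since $\sum c_j \alpha^j = 0$ (with $c_d = 1$), we get $E_n = -\sum_{j=0}^{d} c_j \delta_{n+j} \to 0$. After clearing the denominators of the $c_j$ (multiply by a fixed integer $N$), $N E_n$ is an integer tending to $0$, hence $N E_n = 0$ for $n$ large. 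Thus $(\varepsilon_n)$ eventually satisfies an integer linear recurrence with characteristic polynomial $P$, so $\varepsilon_n = \sum_{i=1}^{d} \mu_i \alpha_i^n$ for suitable $\mu_i \in \Q(\alpha_1,\dots,\alpha_d)$ and all large $n$. Matching with $u_n = \lambda \alpha^n$ gives $\mu_1 = \lambda$ (so $\lambda \in \Q(\alpha_1,\dots,\alpha_d)$, and a Galois/symmetry argument refines this to $\lambda \in \Q(\alpha)$), while $\delta_n = -\sum_{i \geq 2} \mu_i \alpha_i^n$.

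Now the hypothesis $\delta_n \to 0$ forces $|\alpha_i| < 1$ for every $i \geq 2$ with $\mu_i \neq 0$; if some conjugate $\alpha_i$ with $|\alpha_i| \geq 1$ had $\mu_i \neq 0$, a standard argument on sums of powers (e.g. averaging $\delta_n \overline{\alpha_i}^{\,n}$, or using that the dominant terms cannot cancel) contradicts $\delta_n \to 0$. To conclude that \emph{all} conjugates $\alpha_i$, $i\geq 2$, satisfy $|\alpha_i|<1$ — i.e. that $\alpha$ is genuinely Pisot — one uses that $\varepsilon_n \in \Z$: the trace-like quantity $\sum_i \sigma(\mu_i \alpha_i^n)$ over the Galois orbit is a rational integer for each $n$, and if any $|\alpha_i| \geq 1$ survived, the coefficients $\mu_i$ would have to vanish; chasing this through shows $\alpha$ is an algebraic integer (its minimal polynomial is monic with integer coefficients, from the recurrence) with all other conjugates inside the unit disc, hence a PV number. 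Finally, since $\lambda = \mu_1$ lies in the splitting field and is fixed by every automorphism fixing $\alpha$ (because $u_n = \lambda\alpha^n$ determines $\lambda$ rationally in terms of $\alpha$ once the recurrence is in force), we get $\lambda \in \Q(\alpha)$, completing the proof.

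The main obstacle I anticipate is the passage from ``$(\varepsilon_n)$ satisfies an integer linear recurrence'' to the clean exponential-polynomial form and the careful bookkeeping that simultaneously (i) identifies $\lambda$ as the coefficient of $\alpha^n$, (ii) forces the remaining conjugates strictly inside the unit circle, and (iii) guarantees integrality (monic minimal polynomial). Each piece is standard, but making them interlock — especially ruling out conjugates \emph{on} the unit circle and descending $\lambda$ from the splitting field to $\Q(\alpha)$ — requires the most care.
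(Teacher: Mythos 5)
The paper gives no proof of Theorem~\ref{hardy}; it simply cites Theorem~A of Hardy's 1919 article, so there is no in-paper argument to compare against. Your outline is the classical Pisot--Vijayaraghavan proof and its skeleton is sound: the nearest integers $\varepsilon_n$ to $\lambda\alpha^n$ eventually satisfy the rational-coefficient linear recurrence attached to $P_\alpha$, hence $\varepsilon_n=\sum_i\mu_i\alpha_i^n$ for $n$ large; $\delta_n\to0$ forces $\mu_1=\lambda$ and $\mu_i=0$ for every $i\ge2$ with $|\alpha_i|\ge1$; and applying $\sigma\in{\rm Gal}(\overline{\Q}/\Q)$ to the rational sequence $\varepsilon_n$ (with uniqueness of the exponential-sum representation via Vandermonde) gives $\mu_j=\sigma_j(\lambda)\ne0$ for every $j$, whence no conjugate other than $\alpha$ has modulus $\ge1$, and $\lambda=\mu_1$ is fixed by ${\rm Gal}(\overline{\Q}/\Q(\alpha))$, so $\lambda\in\Q(\alpha)$. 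The preliminary dismissal of rational non-integral $\alpha$ is unnecessary---and the appeals to Mahler and Corvaja--Zannier are shaky there, since those results concern $\lambda=1$ or a $c^n$ decay rate, much stronger than $\Vert\lambda\alpha^n\Vert\to0$. The degree-one case of the recurrence, $b\varepsilon_{n+1}=a\varepsilon_n$ with $\gcd(a,b)=1$ and $b\ge2$, already forces $\varepsilon_n=0$ for large $n$, a contradiction.

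The one genuine gap is the integrality of $\alpha$. You assert that the minimal polynomial is ``monic with integer coefficients, from the recurrence,'' but the recurrence coefficients $c_j$ are a priori only rational, and clearing denominators (your factor $N$) produces an integer relation among the $\varepsilon_n$ without making the $c_j$ themselves integers. A separate argument is required. The cleanest, given what you already have, is this: you have identified $\varepsilon_n=Tr_{\Q(\alpha)/\Q}(\lambda\alpha^n)\in\Z$ for all $n\ge n_0$, with $\lambda\in\Q(\alpha)^\times$; so $L:=\Z\lambda\alpha^{n_0}+\cdots+\Z\lambda\alpha^{n_0+d-1}$ is a full lattice in $\Q(\alpha)$, its trace-dual $L'$ is a finitely generated $\Z$-module, and the integrality of all the traces $Tr(\alpha^m\cdot\lambda\alpha^{n_0+j})$ says exactly that $\Z[\alpha]\subseteq L'$, whence $\Z[\alpha]$ is a finitely generated $\Z$-module and $\alpha$ is an algebraic integer. (This is the same lattice argument the paper recalls in \S1 just before de~Smit's theorem, in a slightly different setting. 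Alternatively one can apply Fatou's lemma to the integer power series $\sum_n\varepsilon_nX^n=Q(X)/(X^dP_\alpha(1/X))$, whose denominator is irreducible with constant term $1$, to force $X^dP_\alpha(1/X)\in\Z[X]$.) Finally, the ``standard argument on sums of powers'' you invoke twice should be stated precisely---if $\sum_i\nu_i\beta_i^n\to0$ with the $\beta_i$ distinct then $\nu_i=0$ for every $i$ with $|\beta_i|\ge1$, proved by Ces\`aro-averaging the squared modulus at the dominant scale---since both the identification $\mu_1=\lambda$ and the elimination of unit-circle conjugates rest on it.
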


In this note, we consider the analogue of this question in
the context of the problem of Mahler. More precisely,  consider the sets
$$ H := \{ (\lambda, \alpha) \in \R^{\times} \times  (1, \infty) :~~~ \Vert \lambda \alpha^n\Vert  \to 0 ~~~\mbox{as}~~~ n \to \infty\}
$$
and
$$
M := \{ (\lambda, \alpha) \in \R^{\times} \times (1, \infty) : \exists~
0 < c < 1 ~~ \mbox{such ~~~that}~~
\Vert \lambda \alpha^n\Vert  < c^{n} ~~~ \mbox{for~~ infinitely~~~ many} ~~~~~n \in \N\}.
$$
The goal of our work is to compare and contrast these two sets  lying in $\R^2$.
Sometimes we refer to these sets as the Hardy set and the Mahler set respectively.

To start with, we note that it is known that the set $H$ is countable. See for instance,
the pretty book of Salem \cite[Chap.1, \S4]{salem}. 

On the other hand, the set $M$ is uncountable. In fact, for any sequence $(\epsilon_n)_{n\in\N}$ in $\{0, 1\}$ not identically $0$, set $\lambda = \sum_{n=0}^{\infty} \frac{\epsilon_n}{10^{n!}}$, then $(\lambda, 10)$ lies in $M$.

We also note that Corvaja and Zannier in their
paper construct a transcendental real number $\alpha > 1$ such that $(1, \alpha)$ lies in $M$. Further, it follows from a work of  Bugeaud and Dubickas \cite{bugeaud} that there are uncountably many $\alpha > 1$ such that $(1, \alpha)$ lies in $M$.
 
In this context, one has the following folklore conjecture, which proposes an answer to Hardy's question.
\begin{conj}\label{conjfolk}
If $(\lambda, \alpha) \in H$, then $\alpha$ is a PV number and  $\lambda$ lies in $\Q(\alpha)$.
\end{conj}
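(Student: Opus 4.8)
We sketch the natural line of attack on this conjecture and indicate where it stalls --- which is why it is still open. When $\alpha$ is algebraic the assertion is precisely Theorem~\ref{hardy}, so the real content is to exclude transcendental $\alpha$. Assume $(\lambda,\alpha)\in H$; we aim to show that $\alpha$ is a Pisot number and $\lambda\in\Q(\alpha)$, which in particular would force $\alpha$ to be algebraic. Let $a_n\in\Z$ be the integer nearest to $\lambda\alpha^n$ and put $\epsilon_n:=\lambda\alpha^n-a_n$, so that $\epsilon_n\to 0$ by hypothesis. The plan is to study the generating series
$$
f(z)\ :=\ \sum_{n\ge 0}a_n z^n\ =\ \frac{\lambda}{1-\alpha z}\ -\ g(z),\qquad g(z):=\sum_{n\ge 0}\epsilon_n z^n .
$$
Since $(\epsilon_n)$ is bounded, $g$ is holomorphic in the open unit disc $\D$; hence $f$, a priori holomorphic only in $|z|<1/\alpha$, extends meromorphically to $\D$ with a single, simple pole at $z=1/\alpha$, and all of its Taylor coefficients are rational integers.

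The heart of the matter is that the conjecture would follow immediately once one knows that $f$ is a rational function. Granting this, Fatou's theorem lets us write $f=P/Q$ with $P,Q\in\Z[z]$, $\gcd(P,Q)=1$ and $Q(0)=1$. Then $Q(1/\alpha)=0$; multiplying by $\alpha^{\deg Q}$ and using $Q(0)=1$ exhibits $\alpha$ as a root of a monic polynomial in $\Z[z]$, so $\alpha$ is an algebraic integer. Next, the coefficients $\epsilon_n$ of $g=\frac{\lambda}{1-\alpha z}-f$ tend to $0$, and a rational function with a pole on or inside the unit circle cannot have such coefficients; therefore every pole of $f$ other than $1/\alpha$ lies in $|z|>1$. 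Since the minimal polynomial of $1/\alpha$ over $\Q$ divides $Q$, all conjugates of $1/\alpha$ other than $1/\alpha$ itself lie outside the closed unit disc, i.e.\ every conjugate of $\alpha$ other than $\alpha$ has absolute value $<1$. An algebraic integer $>1$ with this property is a Pisot number. Finally, $\lambda=\lim_{z\to 1/\alpha}(1-\alpha z)f(z)$ is a value of a rational function over $\Q$ at $1/\alpha$, hence lies in $\Q(1/\alpha)=\Q(\alpha)$. Thus the whole statement reduces to the single claim that $f$ is rational.

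Proving that $f$ is rational is the main obstacle, and it is exactly what is missing. One clean reformulation: by Kronecker's criterion, $f$ is rational precisely when the Hankel determinants $D_N:=\det\big(a_{i+j}\big)_{0\le i,j\le N}$ vanish for all large $N$, and since the $a_n$ are integers it would suffice to prove $D_N\to 0$. The rational part $\frac{\lambda}{1-\alpha z}$ contributes rank-one Hankel matrices, so each $D_N$ is really a functional of the error series $g$ alone. Under the stronger hypothesis $\sum_n\Vert\lambda\alpha^n\Vert^2<\infty$ --- Pisot's classical theorem, see Salem \cite{salem} --- the series $g$ lies in the Hardy space of $\D$, and that $L^2$ structure is what lets one control the perturbation and conclude rationality; this is how the square-summable version of the conjecture is proved. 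The bare hypothesis $\epsilon_n\to 0$, however, places $g$ only among power series whose coefficients tend to $0$: there are no $L^2$ boundary values and no a priori control of the $D_N$. Nor do the Diophantine methods behind the companion problem of Mahler help here: the Subspace Theorem arguments underlying Theorem~\ref{czthm} require $\alpha$ to be algebraic, and for transcendental $\alpha$ there is no number field to work in, so the analytic route above seems to be the only handle. Closing this gap --- for instance by extracting enough square-summability along a subsequence, or by finding a substitute for Kronecker's criterion adapted to coefficients that merely tend to $0$ --- is the crux, and is why Conjecture~\ref{conjfolk}, like Hardy's original question, remains open.
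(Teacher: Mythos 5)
The statement you were given is not a theorem in the paper but an explicitly labelled open conjecture: the authors introduce it as a ``folklore conjecture'' proposing an answer to Hardy's question, supply no proof, and in fact use it later only as something to which Question~4 can be \emph{reduced} (Corollary~\ref{corhardy+mahler+}). So there is no ``paper's own proof'' to compare against, and your decision not to manufacture one was the right call.

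Your account of the situation is accurate and well judged. You correctly observe that the algebraic-$\alpha$ case is exactly Theorem~\ref{hardy}, so the open content is the transcendental case, and you lay out the classical Pisot--Salem generating-function attack: write $a_n$ for the nearest integer to $\lambda\alpha^n$, form $f(z)=\sum a_n z^n=\frac{\lambda}{1-\alpha z}-g(z)$, and note that everything (Fatou's lemma forcing $\alpha$ to be an algebraic integer, the location of the other poles forcing $\alpha$ to be Pisot, the residue at $1/\alpha$ giving $\lambda\in\Q(\alpha)$) would follow from the single assertion that $f$ is rational. Your identification of Kronecker's criterion and the Hankel determinants as the point of failure, and of the $\ell^2$ hypothesis in Pisot's theorem as precisely the extra structure that rescues the argument, is the standard and correct diagnosis of why the conjecture is open. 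Two very small wording points: ``$(1-\alpha z)f(z)$'' is not itself a rational function over $\Q$ (since $\alpha$ enters the factor), though the residue $P(1/\alpha)/Q'(1/\alpha)$ is what actually lands $\lambda$ in $\Q(\alpha)$; and the claim that a rational function with a pole on $|z|=1$ cannot have coefficients tending to $0$ deserves a sentence (interference between boundary poles must be ruled out via the partial-fraction expansion), but it is true and standard. Neither affects your conclusion, which matches the paper's: the conjecture remains open, and the generating-function route stalls exactly where you say it does.
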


Clearly, this no longer holds for the set $M$ as  there 
are transcendental numbers $\alpha > 1$ such that $(1, \alpha)$ lie in $M$.

More interestingly, even   if $(\lambda, \alpha) \in M$ with $\alpha$
algebraic, it does not imply that $\lambda$ is algebraic. 
For instance, as remarked before, $(\lambda,10)\in M$ with $\lambda$ the Liouville number $\sum_{n=1}^{\infty} \frac{1}{10^{n!}}$. This is in contrast to Theorem \ref{hardy}.

\begin{rmk}  More generally, when $\alpha > 1 $ is an integer and $\lambda$ is a non zero real number, $(\lambda,\alpha)\in M$ if and only if for some $\varepsilon>0$, the sequence of digits of the expansion of $\lambda$ in base $\alpha$ contains infinitely many blocks of the form $x_n \cdots x_{n+[\varepsilon n]}$ consisting only of zeros or only of $(\alpha-1)$'s.
\medskip
\end{rmk}

These observations give rise to a number of questions.

\bigskip
\noindent
{\bf Questions:}

{\it 
1. What can be said about $(\lambda, \alpha) \in M$ where both
$\lambda$ and $\alpha$ are assumed to be algebraic? Can one
have a theorem similar to Theorem \ref{hardy} in this case?

2. Is it possible to derive some diophantine characterisation of transcendental numbers $\lambda$ such that $(\lambda, \alpha) \in M$ for some algebraic $\alpha$?
 
3.  Is it true that $(1, \alpha) \in H$ implies that
$(1, \alpha) \in M$? This is a consequence of the following.

4. Is it true that $H \subset M$ ?  

5. What is the Hausdorff dimension of the set  $M$?

}
\medskip

As introduced in \cite{cz}, a pseudo-PV number is 
an algebraic number $\alpha \in \C$ such that:
\begin{itemize}
\item $| \alpha | >1$,
 \item all its other conjugates have  absolute values strictly less than $1$;
 \item  $Tr_{\Q(\alpha)/\Q}(\alpha) \in \Z$. 
\end{itemize}

Note that a pseudo-PV number is necessarily real; a positive
pseudo-PV number which is an algebraic integer is simply 
a usual PV number.

Here is our first theorem.

\begin{thm}\label{2}
Let $\lambda$ and $\alpha$ be algebraic numbers such that $(\lambda, \alpha) \in M$. Then there exists a positive integer $s$ such that $\alpha^s$ is a PV number
and hence an algebraic integer.

Furthermore for any $h\in\N^\times$, there are infinitely many $n\in\N$ such that
$\lambda\alpha^n$ lies  in $\Q(\alpha^h)$, is a pseudo-PV number whose trace is non-zero and $\Vert\lambda\alpha^n\Vert<c^{n}$ for some $0<c<1$. 
\end{thm}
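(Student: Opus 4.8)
The plan is to deduce both statements from Schmidt's Subspace Theorem, following closely the method of Corvaja and Zannier \cite{cz}. Fix an infinite set $\mathcal N\subseteq\N$ and $0<c<1$ with $\Vert\lambda\alpha^n\Vert<c^{n}$ for $n\in\mathcal N$, and for such $n$ write $\lambda\alpha^n=p_n+\delta_n$ with $p_n\in\Z$ and $|\delta_n|<c^{n}$. One first disposes of the degenerate cases: if $\delta_n=0$ for infinitely many $n$, or more generally if $\alpha^{k}\in\Q$ for some $k\geq1$, then $\lambda\in\Q(\alpha)$ and the claim becomes elementary (Mahler's theorem together with the analysis indicated in the Remark above); so assume henceforth that no power of $\alpha$ is rational and $\delta_n\neq0$ for $n$ large. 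Let $L$ be the Galois closure of $\Q(\lambda,\alpha)$, let $v_0$ be the place of $L$ extending the given real embedding, and let $S$ be the finite set of places of $L$ consisting of the archimedean ones together with the finite places at which $\lambda$ or $\alpha$ fails to be a unit. Then $\alpha,\lambda$ are $S$-units and $\delta_n=\lambda\alpha^n-p_n$ is an $S$-integer, exponentially small at $v_0$ and with sizes elsewhere in $S$ controlled by those of $\lambda\alpha^n$.

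\emph{Step 1: a power of $\alpha$ is a PV number.} Attach to the $p_n$ the points $\mathbf x_n=\bigl(p_n:\sigma_1(\lambda\alpha^n):\cdots:\sigma_D(\lambda\alpha^n)\bigr)\in\P^{D}(L)$, where $\sigma_1=\mathrm{id},\dots,\sigma_D$ list the $\Q$-embeddings of $\Q(\lambda,\alpha)$. At $v_0$ the form $X_0-X_1$ takes at $\mathbf x_n$ the value $-\delta_n$, exponentially small relative to $|\mathbf x_n|_{v_0}\gg\alpha^{n}$; away from $v_0$ one uses coordinate forms, and the usual computation with the product formula — the $S$-integrality of $\delta_n$ making the non-archimedean contributions telescope — bounds the resulting double product by $\theta^{\,n}$ for some $\theta<1$, while $H(\mathbf x_n)\ll C^{n}$. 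For $\varepsilon$ small the Subspace Theorem confines the $\mathbf x_n$ to finitely many proper subspaces; one of them contains infinitely many $\mathbf x_n$, giving a non-trivial relation $a_0p_n+\sum_j a_j\,\sigma_j(\lambda)\sigma_j(\alpha)^{n}=0$ on an infinite subset of $\mathcal N$. Substituting $p_n=\lambda\alpha^n-\delta_n$, grouping the $\sigma_j(\alpha)$ by value and comparing archimedean‑dominant parts (using $|p_n|\asymp\alpha^n$) forces the conjugates of $\alpha$ of modulus $>\alpha$ to drop out and those of modulus exactly $\alpha$ to be roots of unity times $\alpha$; a descent on the dimension — re‑applying the Subspace Theorem inside the successive subspaces, as in \cite{cz}, and invoking Skolem--Mahler--Lech to pass to arithmetic progressions — eliminates the remaining conjugates of modulus in $[1,\alpha)$ and (via the non‑archimedean part of the argument) yields integrality, so that for a suitable $s\in\N^\times$ the number $\alpha^{s}$ is an algebraic integer whose conjugates other than itself have modulus $<1$: a PV number. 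In particular $\alpha$ is an algebraic integer.

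\emph{Step 2: structure of $\lambda\alpha^n$.} Fix $h\in\N^\times$. Since $\alpha^{s}$ is PV, any $\sigma\in\mathrm{Gal}(L/\Q)$ has either $|\sigma(\alpha)|<1$ or $\sigma(\alpha)=\omega_\sigma\alpha$ with $\omega_\sigma^{s}=1$; hence a $\sigma$ fixing $\alpha^{h}$ also fixes $\alpha^{\gcd(h,s)}$, so $\Q(\alpha^{h})=\Q(\alpha^{\gcd(h,s)})$ and we may take $h\mid s$. It suffices to prove that, for $n$ in some infinite arithmetic progression, $\sigma(\lambda)\,\omega_\sigma^{\,n}=\lambda$ for every $\sigma$ with $|\sigma(\alpha)|=\alpha$. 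Indeed, for each such $n$ the number $\lambda\alpha^n$ is then fixed by $\mathrm{Gal}(L/\Q(\alpha^{h}))$, hence lies in $\Q(\alpha^{h})\subseteq\Q(\alpha^{s})$; each of its conjugates over $\Q$ other than itself equals some $\sigma(\lambda)\sigma(\alpha)^{n}$ with $|\sigma(\alpha)|<1$, hence tends to $0$, so for $n$ large $\lambda\alpha^n$ is its unique conjugate of modulus $\geq1$; a local computation at the finitely many places of $S$ (using that $\delta_n$ is an $S$-integer and $p_n=\lambda\alpha^n-\delta_n$, the progression being chosen so that the trace has no denominators at the bad primes) gives $\mathrm{Tr}_{\Q(\lambda\alpha^n)/\Q}(\lambda\alpha^n)\in\Z$, so $\lambda\alpha^n$ is a pseudo-PV number; this trace equals $p_n\asymp\lambda\alpha^n$ up to an error tending to $0$, hence is non-zero for $n$ large; and $\Vert\lambda\alpha^n\Vert\leq|\delta_n|$, which equals the sum of the small conjugates, is $\ll\vartheta^{\,n}$ for some $\vartheta<1$, providing the required $c$.

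\emph{Proof of the italicised claim, and the main obstacle.} This is a second application of the Subspace Theorem, now in the Ridout regime: $\lambda$ is approximated by $p_n/\alpha^{n}$, whose denominator involves only the fixed primes dividing $N_{\Q(\alpha)/\Q}(\alpha)$. For $\sigma,\sigma'$ with $|\sigma(\alpha)|,|\sigma'(\alpha)|<1$ the difference $\sigma(\delta_n)-\sigma'(\delta_n)=\sigma(\lambda)\sigma(\alpha)^{n}-\sigma'(\lambda)\sigma'(\alpha)^{n}$ tends to $0$, and $\delta_n$ itself is exponentially small; applying the Subspace Theorem to the points whose coordinates are the conjugates of $\delta_n$ — these small forms at $v_0$, coordinate forms elsewhere — and passing to an arithmetic progression by Skolem--Mahler--Lech, one peels off relations which, by the same descent as in Step 1 and using that over an arithmetic progression the representation $p_n=\sum_\beta b_\beta\beta^{\,n}$ is unique (so its coefficients are determined), eventually force $\sigma(\delta_n)=\delta_n$, equivalently $\sigma(\lambda)\omega_\sigma^{\,n}=\lambda$, for every $\sigma$ with $|\sigma(\alpha)|=\alpha$. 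The main obstacle is exactly this descent: each invocation of the Subspace Theorem yields only one relation on an infinite subset, so one iterates inside the successive subspaces while controlling the degeneracies produced by conjugates of equal absolute value and by coincidences among the $\sigma_j(\alpha)$; verifying the integrality of the trace at the bad finite places, and that the degenerate cases set aside at the outset genuinely reduce to Mahler's theorem, are the remaining points that require care.
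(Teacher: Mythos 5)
Your plan is in the right general spirit (everything ultimately rests on the Subspace Theorem and a descent, as it does in Corvaja--Zannier), but it diverges from the paper and, more importantly, has a genuine gap.

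The paper does not re-derive the Corvaja--Zannier result; it quotes their Main Theorem (Theorem~\ref{cz2}) as a black box, which instantly gives that $\lambda\alpha^n$ is a pseudo-PV number for all $n$ in an infinite set $I\subset\N$ with $0<\Vert\lambda\alpha^n\Vert<H(\alpha^n)^{-\epsilon}$. From there the paper adds three short but essential ingredients: (i) Lemma~\ref{3}, which shows $\alpha$ is an algebraic integer once $Tr_{k/\Q}(\lambda\alpha^n)$ is a nonzero integer infinitely often; (ii) a clean embedding count — since $\lambda\alpha^n$ is pseudo-PV it has a unique conjugate of modulus $>1$, but all $[\Q(\alpha^h,\lambda\alpha^i):\Q(\alpha^h)]$ embeddings over $\Q(\alpha^h)$ send $\lambda\alpha^{mh+i}$ to a conjugate of modulus $>1$ once $m$ is large, forcing that degree to be $1$ and $\lambda\alpha^n\in\Q(\alpha^h)$; and (iii) Proposition~\ref{propspv} together with Lemma~\ref{lemspv} to exclude the Salem case. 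Your Step~2 attempts to reach conclusion (ii) by the long route of forcing $\sigma(\lambda)\omega_\sigma^n=\lambda$ via a second Subspace-Theorem application; the paper's counting argument is both shorter and gap-free.

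The real gap is the Salem case. After your Step~1 you assert that the descent ``eliminates the remaining conjugates of modulus in $[1,\alpha)$'', but the subspace descent produces only linear relations among the $\sigma_j(\alpha)^n$; it does not by itself say anything about conjugates of modulus exactly $1$. Indeed, $\lambda\alpha^n$ being pseudo-PV does not forbid $\alpha$ from having conjugates $\alpha'$ of modulus $1$: the corresponding conjugates $\lambda'\alpha'^n$ of $\lambda\alpha^n$ have fixed modulus $|\lambda'|$, and if $|\lambda'|<1$ no contradiction arises. So after reaching the stage where $\alpha^s$ is an algebraic integer with exactly one conjugate of modulus $>1$, you still need to rule out $\alpha^s$ Salem. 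The paper does this with a separate quantitative step: writing the trace of $\lambda\alpha^{ms+i}$ and isolating the modulus-one conjugates $\alpha_1,\dots,\alpha_{2d}$, it shows (Proposition~\ref{propspv}) via the $p$-adic Subspace Theorem that an inequality $\Vert q\lambda_1\alpha_1^{sm}+\dots+q\lambda_{2d}\alpha_{2d}^{sm}\Vert<{\rm e}^{-\varepsilon'm}$ infinitely often forces a nontrivial vanishing linear relation $A_0+A_1\alpha_1^{sm}+\dots+A_{2d}\alpha_{2d}^{sm}=0$ for infinitely many $m$, and then Lemma~\ref{lemspv} shows such a relation is impossible for a Salem number. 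Your sketch contains no analogue of this two-part argument, and the ``main obstacle'' you flag at the end is essentially an acknowledgement of this hole. Additionally, the initial reduction ``if $\alpha^k\in\Q$ for some $k$ then $\lambda\in\Q(\alpha)$ and the claim becomes elementary'' is not justified and does not match the Remark you refer to (which concerns $\alpha$ an integer and the base-$\alpha$ expansion of $\lambda$, not $\alpha$ a root of a rational).
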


Proof of this theorem builds upon the techniques developed by Corvaja and Zannier.
We also need to prove some further diophantine results on behaviour of powers of Salem numbers
(Proposition \ref{propspv}) proof of which requires the $p$-adic subspace theorem. 

\medskip
\noindent
Recall that a Salem number is a real algebraic integer
$\alpha $ such that
\begin{itemize}
\item $\alpha >1$,
 \item all its other conjugates have  absolute values  at most $1$;
 \item at least
one conjugate has absolute value equal to $1.$
\end{itemize}

\medskip
\noindent

The distribution of exponential sequences $(\Vert \alpha^n\Vert)_{n\in\N}$  
is rather mysterious and the  few cases where one has some information 
is when
$\alpha$ is an algebraic integer and  $Tr_{k/\Q}(\alpha^n)$, with $k=\Q(\alpha)$,
is close to the nearest integer of $\alpha^n$.  This motivates 
the study of  algebraic numbers 
$\alpha$  such  that  $Tr_{k/\Q}(\alpha^n) \in \Z$
for $n$ lying in suitable subsets $I$ of $\N$. 

It is not difficult to see that  if $Tr_{k/\Q}(\alpha^n)$ is an integer for all $n \in \N$, $\alpha$ is 
an algebraic integer. For, the complementary  module $L'$ of the lattice
$L$ generated by $1, \alpha, \cdots, \alpha^d$ is a finitely generated
$\Z$-module and the hypothesis above implies that  $L'$ contains the ring
$\Z[\alpha]$. Hence $\alpha$ is necessarily an algebraic integer.

 On the other hand, one has the following nice result of Bart de Smit \cite{de}.
\begin{thm} {\bf (Bart de Smit )}
Let $\alpha$ be an algebraic number of degree $d$ such that $Tr_{k/\Q}(\alpha^i)$ is an
integer for all  natural numbers $i$  with $1 \leq i  \leq d + d \log_{2}d$.
Then $\alpha$ is an algebraic integer.
\end{thm}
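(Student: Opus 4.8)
The plan is to localize the problem and then run a $p$-adic valuation count. Since a rational number is an integer iff it lies in $\Z_p$ for every prime $p$, it suffices to show that the (monic) minimal polynomial $f\in\Q[x]$ of $\alpha$, of degree $d$, has all its coefficients in $\Z_p$ for each $p$. Fix $p$, let $\alpha_1,\dots,\alpha_d\in\overline{\Q}_p$ be the roots of $f$, with the unique extension of $v_p$ normalised by $v_p(p)=1$, and put $B:=d+d\log_2 d$. Writing $P_\ell:=\sum_{j=1}^d\alpha_j^\ell=Tr_{k/\Q}(\alpha^\ell)$, the hypothesis says $v_p(P_\ell)\ge 0$ for $0\le\ell\le B$. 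Assume, for contradiction, that $f\notin\Z_p[x]$, equivalently that $T:=\{\,j:\ v_p(\alpha_j)<0\,\}$ is non-empty, of size $m$ with $1\le m\le d$. The roots outside $T$ contribute $p$-integrally to every $P_\ell$, so setting $S_\ell:=\sum_{j\in T}\alpha_j^\ell$ we get $v_p(S_\ell)\ge 0$ for $0\le\ell\le B$; the single exception is $S_0=m$, whose valuation $v_p(m)$ will act as a ``buffer''.

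Two structural facts drive everything. First, the $\alpha_j$ with $j\in T$ are the reciprocal roots of $g(t):=\prod_{j\in T}(1-\alpha_j t)=1-e_1 t+\cdots+(-1)^m e_m t^m\in\Q_p[t]$, with $v_p(e_m)=\sum_{j\in T}v_p(\alpha_j)<0$; consequently $(S_\ell)$ obeys the linear recursion $S_\ell=\sum_{i=1}^m(-1)^{i-1}e_i S_{\ell-i}$ for $\ell\ge m$. Second, Newton's identities write $\ell!\,e_\ell$ as a polynomial with integer coefficients in $S_1,\dots,S_\ell$; since $m\le d\le B$ forces $v_p(S_1),\dots,v_p(S_m)\ge 0$, induction gives $v_p(e_\ell)\ge -v_p(\ell!)\ge -v_p(m!)$ for $1\le\ell\le m$, so in particular $1\le -v_p(e_m)\le v_p(m!)$. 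A useful byproduct: since $v_p(e_\ell)\ge -v_p(\ell!)$ with $v_p(\ell!)<\ell$, the Newton polygon of $g$ — which runs from $(0,0)$ to $(m,v_p(e_m))$ through vertices of height $\ge -v_p(\ell!)$ — has all slopes $>-1$; equivalently every $j\in T$ in fact satisfies $-1<v_p(\alpha_j)<0$.

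The core of the argument is to show that, starting from $S_0,\dots,S_{m-1}$ and iterating the recursion, the valuations $v_p(S_\ell)$ are forced strictly below zero at some index $\ell\le B$, contradicting $v_p(S_\ell)\ge 0$. Heuristically: the defect $\delta:=-v_p(e_m)\ge 1$ must eventually surface; once the initial buffer $v_p(m)\le\log_2 m$ is spent, each turn of the recursion — of length at most $m$ — drains at least one unit of $p$-adic valuation from the active window of $m$ consecutive $S_\ell$'s; and the total room to be consumed (buffer plus the room needed to absorb the factorial denominators in Newton's identities) is at most $v_p(m!)\le\log_2(m!)\le\log_2(d!)<d\log_2 d$, which, added to the $\le m\le d$ steps needed to fill a window of length $m$, keeps everything within $\ell\le B=d+d\log_2 d$. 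The one genuinely delicate step is turning this into a proof: one must preclude the cancellations in the recursion that could a priori keep every $v_p(S_\ell)$ non-negative — this is precisely where the constraint $-1<v_p(\alpha_j)<0$ (i.e. the restriction on the $e_i$ imposed by the Newton polygon of $g$) is needed, by descending through the successive valuation levels of the $\alpha_j$ — and one must keep the constants sharp enough to land exactly on the stated bound rather than a cruder one.

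Finally, there is an equivalent formulation through lattices, in the spirit of the sketch recalled above for the unconditional statement: with $R:=\Z_p+\Z_p\alpha+\cdots+\Z_p\alpha^{d-1}\subset k\otimes_\Q\Q_p$ and its trace-dual $R^\vee$, the hypothesis gives $\alpha^i\in R^\vee$ for all $i\le d\log_2 d+1$, and the point is to deduce $\alpha R^\vee\subseteq R^\vee$, i.e. that $\alpha$ is $p$-integral. Here $R\subseteq R^\vee$ with $[R^\vee:R]=p^{\,v_p(\mathrm{disc}\,f)}$, and balancing the chain $1,\alpha,\dots,\alpha^{d\log_2 d+1}$ of consecutive $\alpha$-translates sitting inside $R^\vee$ against the $p$-adic size of $\mathrm{disc}\,f$ plays exactly the role of the buffer-and-drain bookkeeping above; the delicate point is the same.
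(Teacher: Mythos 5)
The paper does not prove de Smit's theorem; it is cited to \cite{de}, so there is no in-paper argument to compare against, and I assess your proposal on its own.

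Your proposal is not a proof: the step you yourself flag as ``the one genuinely delicate step'' --- precluding the cancellations that could keep every $v_p(S_\ell)$ non-negative forever --- is the whole content of the theorem, and you leave it as a heuristic. The preliminaries are fine (localization at $p$, restricting to the set $T$ of roots of negative valuation, the linear recursion with characteristic polynomial $g$, the Newton-identity bound $v_p(e_\ell)\ge -v_p(\ell!)$, and the Newton-polygon deduction that $-1<v_p(\alpha_j)<0$ for $j\in T$). But the ``buffer-and-drain'' claim --- that each pass through the recursion of length $m$ must drain a unit of $p$-adic valuation from the active window --- is asserted, not argued. The recursion $S_\ell = e_1 S_{\ell-1}-e_2 S_{\ell-2}+\cdots+(-1)^{m+1}e_m S_{\ell-m}$ has coefficients of valuation as low as $-v_p(i!)$, so it mixes terms of negative valuation, and there is no a priori reason cancellation cannot keep $v_p(S_\ell)\ge 0$ for all $\ell$. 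You need a genuine monotone invariant, and you do not produce one; the numerics $v_p(m!)\le\log_2(d!)<d\log_2 d$ are in the right ballpark but are not tied to any quantity that provably decreases. Until that mechanism is supplied, the argument establishes nothing beyond the trivial range $1\le i\le d$.

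The final lattice paragraph is closer in spirit to de Smit's actual method (a length/index count for a chain of $\Z_p[\alpha]$-modules sitting between $M=\Z_p[\alpha]$ and its trace dual $M^*$, balanced against $[M^*:M]=p^{\,v_p(\operatorname{disc}f)}$), but you again defer the crux with ``the delicate point is the same.'' If you want to complete the proof, that is the route to push: construct an explicit strictly increasing chain of $R$-submodules of $M^*$ whose length is forced to be large by the hypothesis $\alpha^i\in M^*$ for $i\le d+d\log_2 d$, and bound that length above using $[M^*:M]$; the sharp constant falls out of comparing the two bounds. As written, neither sketch performs this comparison, so the proposal has a genuine gap.
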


 The example $\alpha = \frac{1}{\sqrt{2}}$ shows that the above bound is optimal.

\smallskip
\noindent
On the other hand, a minor modification of the works of Corvaja and Zannier  yields the following:
 
\begin{thm} \label{int1}
Let  $\alpha $ be an  algebraic number
such that $Tr_{k/\Q}(  \alpha^n) $
is a non-zero integer for infinitely many $n \in \N$.
 Then $\alpha$ is necessarily
an algebraic integer.
\end{thm}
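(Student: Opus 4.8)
The plan is to prove, by induction on the degree, a statement slightly more general than Theorem~\ref{int1}: \emph{if $\beta$ is algebraic and $\mu\in\Q(\beta)^{\times}$ satisfies $\mathrm{Tr}_{\Q(\beta)/\Q}(\mu\beta^n)\in\Z\setminus\{0\}$ for infinitely many $n\in\N$, then $\beta$ is an algebraic integer}; Theorem~\ref{int1} is the case $\mu=1$. Carrying the auxiliary multiplier $\mu$ is precisely what makes an induction on the degree possible. The base case $\deg\beta=1$ is elementary: writing $\beta=c/d$ and $\mu=e/f$ in lowest terms, $\mu\beta^n=ec^n/(fd^n)$ with $\gcd(c^n,d^n)=1$, so this can be a nonzero integer for infinitely many $n$ only if $d=1$, i.e.\ $\beta\in\Z$.

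For the inductive step, suppose $d:=\deg\beta\ge2$ and, for contradiction, that $\beta$ is not an algebraic integer. Let $\beta_1,\dots,\beta_d$ be the conjugates of $\beta$ and $\mu_1,\dots,\mu_d$ the conjugates of $\mu$ under the same embeddings (all nonzero), all lying in the Galois closure $K$, so that $T_n:=\mathrm{Tr}_{\Q(\beta)/\Q}(\mu\beta^n)=\sum_{i=1}^d\mu_i\beta_i^n$ is a nonzero rational integer for $n$ in an infinite set $I$. Since $\beta$ is not integral, fix a prime $p$ and a place $v\mid p$ of $K$ with $|\beta|_v>1$; order the $\beta_i$ so that $|\beta_1|_v\ge\cdots\ge|\beta_d|_v$ and let $t$ count those with $|\beta_i|_v>1$. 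For $n\in I$ one has $|T_n|_v\le1$, and the conjugates with $|\beta_i|_v\le1$ contribute boundedly at $v$, so $|\mu_1\beta_1^n+\cdots+\mu_t\beta_t^n|_v\le C_0$ for a constant $C_0$ independent of $n$; since $|\mu_1\beta_1^n|_v$ grows like $|\beta_1|_v^n$, this already forces $t\ge2$. Also $T_n\ne0$ gives $|T_n|_\infty\ge1$, so the projective height $H(P_n)$ of $P_n:=(1:\beta_1^n:\cdots:\beta_t^n)\in\P^t(K)$ grows geometrically in $n$. I would then apply the $p$-adic Subspace Theorem to the points $P_n$, $n\in I$: take $S$ to consist of the archimedean places together with all places at which some $\beta_i$ is not a unit (in particular $v$); at each $w\in S$ use the coordinate forms $X_0,\dots,X_t$, except at $v$, where $X_1$ is replaced by $L:=\mu_1X_1+\cdots+\mu_tX_t$ (linearly independent from $X_0,X_2,\dots,X_t$, as $\mu_1\ne0$). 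By the product formula the relevant product over $S$ of $\|L_{j,w}(P_n)\|_w/\|P_n\|_w$ (suitably normalised absolute values) equals $H(P_n)^{-(t+1)}$ times the factor $\|L(P_n)\|_v/\|\beta_1^n\|_v\le C_0'|\beta_1|_v^{-n}$, which — since $|\beta_1|_v>1$ and $H(P_n)$ grows geometrically — is at most $H(P_n)^{-(t+1)-\varepsilon}$ for a fixed $\varepsilon>0$ and all large $n$. Hence the $P_n$ lie in finitely many hyperplanes, producing a nontrivial relation $c_0+c_1\beta_1^n+\cdots+c_t\beta_t^n=0$, with some $c_i$ ($i\ge1$) nonzero, valid for infinitely many $n\in I$.

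To finish, note that $1,\beta_1,\dots,\beta_t$ are distinct and nonzero, so $u_n:=c_0+\sum c_i\beta_i^n$ is a linear recurrence; by Skolem--Mahler--Lech its zero set is a finite set together with finitely many complete arithmetic progressions, one of which, $P=a+b\N$ with $b\ge1$, meets $I$ infinitely. Restricting $u_n=0$ to $P$ and using linear independence of the distinct geometric sequences $m\mapsto\gamma^m$, the bases $1,\beta_1^b,\dots$ occurring with nonzero coefficient cannot all be distinct; hence either some $\beta_i^b=1$ — forcing $\beta$ to be a root of unity, hence an algebraic integer, a contradiction — or $\beta_i^b=\beta_j^b$ for two distinct conjugates, whence $[\Q(\beta):\Q(\beta^b)]\ge2$ and $\deg(\beta^b)<d$. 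In the latter case put $\mu':=\mathrm{Tr}_{\Q(\beta)/\Q(\beta^b)}(\mu\beta^a)\in\Q(\beta^b)$; then $\mathrm{Tr}_{\Q(\beta^b)/\Q}\bigl(\mu'(\beta^b)^m\bigr)=T_{a+bm}$, a nonzero integer for infinitely many $m$. If $\mu'=0$ this contradicts $T_n\ne0$ on $P\cap I$; otherwise the induction hypothesis applied to $(\mu',\beta^b)$ makes $\beta^b$, hence $\beta$ (as a root of the monic $X^b-\beta^b$ with algebraic-integer coefficients), an algebraic integer — again a contradiction. This closes the induction.

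The main obstacle is exactly the degenerate alternative $\beta_i^b=\beta_j^b$ thrown up by the Subspace Theorem: it cannot be excluded a priori, and recognising that it forces a genuine drop in degree for a suitable power of $\beta$ is the reason one must work with the twisted trace $\mathrm{Tr}(\mu\beta^n)$ and argue inductively, rather than attacking the $\mu=1$ statement directly. The remaining points — choosing $S$ and controlling $H(P_n)$ so that the Subspace Theorem applies with some $\varepsilon>0$, and checking that $P$ still meets $I$ infinitely after passing to the arithmetic progression — are routine.
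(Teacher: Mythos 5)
Your proof is correct, but it takes a genuinely different route from the paper's (which derives Theorem~\ref{int1} from Lemma~\ref{3}). You induct on the degree of $\beta$, carrying a multiplier $\mu\in\Q(\beta)^\times$: the Subspace Theorem plus Skolem--Mahler--Lech produce an arithmetic progression $a+b\N$ on which a collapse $\beta_i^b=\beta_j^b$ (or $\beta_i^b=1$) occurs, and you then replace $(\mu,\beta)$ by $(\mu',\beta^b)$ with $\mu'=Tr_{\Q(\beta)/\Q(\beta^b)}(\mu\beta^a)$ and strictly smaller degree. The paper instead performs the normalization once and for all: it takes $h$ to be the order of the (cyclic) torsion group of $K^\times$, $K$ the Galois closure, restricts to a residue class modulo $h$, and after Lemma~1 of Corvaja--Zannier and Skolem--Mahler--Lech finds that $\sigma_i(\alpha^h)/\sigma_j(\alpha^h)=\bigl(\sigma_i(\alpha)/\sigma_j(\alpha)\bigr)^h$ is a root of unity; since $\sigma_i(\alpha)/\sigma_j(\alpha)$ lies in $K$ and every root of unity in $K$ raised to the power $h$ is $1$, this forces $\sigma_i(\alpha^h)=\sigma_j(\alpha^h)$, directly contradicting the choice of the $\sigma_i$ as distinct representatives modulo the stabilizer of $\Q(\alpha^h)$. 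Thus the degenerate case is eliminated in one stroke and no descent is needed. Both arguments use the $p$-adic Subspace Theorem and Skolem--Mahler--Lech in essentially the same roles; yours pays with the bookkeeping of $\mu$, which is exactly what makes the descent possible, but in exchange is self-contained and does not require identifying the torsion exponent in advance. Note also that the paper's Lemma~\ref{3} allows the multiplier $\lambda$ to lie outside $\Q(\alpha)$, whereas your inductive statement requires $\mu\in\Q(\beta)$; that extra generality is, however, not needed for Theorem~\ref{int1} itself.
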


This is an immediate consequence of  Lemma~\ref{3} in Section 2.
 \bigskip
 \noindent

Note that in the above theorem, the hypothesis that  $Tr_{k/\Q}(  \alpha^n) $ is non zero 
is necessary.  For   $\alpha = \frac{1}{\sqrt{2}}$ satisfies 
$Tr(\alpha^n) = 0$ for all odd $n$.
However, the example of  $\alpha = \frac{1}{\sqrt{2}}$
is not generic. More precisely, if  $\alpha$
is a real algebraic number such that
$Tr_{k/\Q}( \alpha^n) = 0 $ for infinitely many $n \in \N$,
then $\alpha$ is not necessarily the root of
a rational number. Here is an example.

\begin{exmpl}
The roots of the polynomial $X^4-6X^2+4$ are the real algebraic numbers $\pm\frac{1\pm\sqrt{5}}{\sqrt{2}}$. In particular, the splitting field of this polynomial is real and has only two roots of unity $\pm1$. Set $\alpha=\frac{1+\sqrt{5}}{\sqrt{2}}$ the largest root, we check $\alpha\notin\Q(\alpha^2)$ and $Tr_{\Q(\alpha)/\Q}(\alpha^{2m+1})=0$ for $m\in\N$. However, $\alpha$ is not a root of a rational number.
\end{exmpl}

In this context, we have the following theorem.

\begin{thm} \label{int3real}
Let $\alpha$ be a real, positive algebraic number, $h$ be the order of the torsion group of the splitting field of the minimal polynomial of $\alpha$. Then, $Tr_{\Q(\alpha)/\Q}(\alpha^n) = 0$ for infinitely many $n \in \N$ if and only if $\alpha$ does not belong to the field $\Q(\alpha^h)$. 
\end{thm}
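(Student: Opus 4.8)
The plan is to analyze the trace $Tr_{\Q(\alpha)/\Q}(\alpha^n)$ as a sum over the Galois conjugates of $\alpha$ and exploit the structure of the torsion group of the splitting field $K$ of the minimal polynomial of $\alpha$. Let $\sigma_1 = \mathrm{id}, \sigma_2, \dots, \sigma_d$ be the embeddings of $\Q(\alpha)$ into $\C$, and write $\alpha_i = \sigma_i(\alpha)$, so that $Tr_{\Q(\alpha)/\Q}(\alpha^n) = \sum_{i=1}^d \alpha_i^n$. I would first handle the easy direction: if $\alpha \in \Q(\alpha^h)$, then I claim the trace is eventually bounded away from $0$ in a suitable sense — more precisely, I would show that in this case there is no ``cancellation mechanism'' available, so the trace can vanish only finitely often. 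The cleanest route is to argue the contrapositive of the hard direction, so let me organize around that.

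For the substantive direction, suppose $Tr_{\Q(\alpha)/\Q}(\alpha^n) = 0$ for infinitely many $n \in \N$. The key tool is the theorem on vanishing of sums of the form $\sum c_i \beta_i^n$: a theorem of the Skolem–Mahler–Lech type, or more directly the structure theorem for the set of $n$ where a power sum $\sum_{i} \alpha_i^n$ vanishes, tells us that this set is a finite union of arithmetic progressions together with a finite set; since it is infinite, at least one full arithmetic progression $n \equiv r \pmod{m}$ occurs. Along such a progression, vanishing forces, by the standard argument (grouping conjugates by the ratios $\alpha_i/\alpha_j$ that are roots of unity), a nontrivial partition of the conjugates $\{\alpha_1, \dots, \alpha_d\}$ into ``blocks'' within which the $\alpha_i^m$ are all equal and whose per-block sums of the relevant root-of-unity factors vanish. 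In particular $\alpha^m$ has a conjugate $\zeta \alpha^m$ with $\zeta \ne 1$ a root of unity lying in $K$; since $h$ is the order of the torsion subgroup of $K^\times$, we get $\zeta^h = 1$, and pushing this through shows $\alpha^h$ has a Galois conjugate (over $\Q(\alpha^h)$, using the normality of $K$) distinct from $\alpha^h$ in a way that is compatible with $\alpha \notin \Q(\alpha^h)$. Here I would need to be careful: the precise claim is that $\alpha \notin \Q(\alpha^h)$, i.e. $[\Q(\alpha):\Q(\alpha^h)] > 1$, which is equivalent to the polynomial $X^h - \alpha^h$ being reducible over $\Q(\alpha^h)$ in a way that separates $\alpha$ from its $\mu_h$-multiples — and this is exactly what the root of unity $\zeta$ produced above witnesses.

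Conversely, if $\alpha \notin \Q(\alpha^h)$, I would show $Tr(\alpha^n) = 0$ infinitely often by an explicit construction. The hypothesis means some conjugate of $\alpha$ over $\Q(\alpha^h)$ is $\zeta\alpha$ for a primitive $e$-th root of unity $\zeta$ with $e \mid h$, $e > 1$ (this uses the classical description of $\Q(\alpha)/\Q(\alpha^h)$ via Kneser's theorem or direct Kummer-theoretic analysis: a subextension is determined by which roots of unity multiplying $\alpha$ stay conjugate). Then pairing each conjugate $\alpha_i$ with $\zeta^{?}\alpha_i$ appropriately, for $n$ in the arithmetic progression $n \equiv h/e \cdot(\text{something}) $ making $\zeta^n$ a primitive $e$-th root of unity — more simply, for $n$ with $e \nmid n$ chosen so the $e$-th roots of unity $\zeta^n, \zeta^{2n}, \dots$ sum to $0$ over each orbit — the conjugates cancel in groups and $Tr(\alpha^n) = 0$.

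The main obstacle is the middle step of the hard direction: extracting from ``the power sum $\sum \alpha_i^n$ vanishes on an arithmetic progression'' the precise group-theoretic statement that a root of unity of order dividing $h$ sends $\alpha$ to a conjugate, and then converting that into the field-theoretic conclusion $\alpha \in \Q(\alpha^h)$ failing. This requires combining (i) the vanishing-power-sum structure theorem, (ii) Galois action on $K$ and the fact that all roots of unity appearing as ratios of conjugates lie in $K$ hence have order dividing $h$, and (iii) a clean statement — which I would isolate as a lemma — that $\alpha \in \Q(\alpha^h)$ if and only if no nontrivial $h$-th root of unity multiple of $\alpha$ is a $\Q(\alpha^h)$-conjugate of $\alpha$, equivalently if and only if the minimal polynomial of $\alpha$ over $\Q(\alpha^h)$ is not of the form $X^m - \alpha^m$ for any $m > 1$ dividing $h$. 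I expect (iii) to follow from Kneser's theorem on the degree of radical extensions (or the Norm/Kummer theory in the relevant cyclotomic setting, since $\zeta_h \in K$ but not necessarily in $\Q(\alpha^h)$, which is the only subtlety), and once (iii) is in place the two directions close symmetrically.
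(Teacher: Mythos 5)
Your outline is substantively correct and hits the same two pillars as the paper's proof: a Skolem--Mahler--Lech vanishing argument, and a field-theoretic lemma about the radical extension $\Q(\alpha)/\Q(\alpha^h)$ (your item (iii) is precisely the paper's Lemma~\ref{lemzerotracereal}, proved via irreducibility of $X^{h'}-\alpha^{h'}$ over $\Q(\alpha^h)$ using the criterion in Lang, \emph{Algebra}, Ch.~VI, \S9). The organizational difference is in the forward direction: you apply SML directly to the power sum $\sum_i\alpha_i^n$ over $\Q$ and then extract a root-of-unity ratio among conjugates, whereas the paper first splits $\N$ into residue classes $a+h\N$, sets $L=\Q(\alpha^h)$, and rewrites $Tr_{H/\Q}(\alpha^{a+hm})$ as $\sum_i\tau_i(Tr_{H/L}(\alpha^a))\,\tau_i(\alpha^h)^m$, a recurrence whose characteristic roots $\tau_i(\alpha^h)$ are $h$-th powers in $H$ and hence have no root-of-unity ratios. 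That nondegeneracy makes SML conclude directly that $Tr_{H/L}(\alpha^a)=0$, from which the lemma gives $\alpha^a\notin L$. Your route also closes (two embeddings of $\Q(\alpha)$ agreeing on $\Q(\alpha^h)$ but differing on $\alpha$ force $[\Q(\alpha):\Q(\alpha^h)]>1$), but note one slip in your phrasing: ``$\alpha^h$ has a Galois conjugate over $\Q(\alpha^h)$ distinct from $\alpha^h$'' is vacuous since $\alpha^h\in\Q(\alpha^h)$; what you need (and what your argument actually produces) is a conjugate of $\alpha$ \emph{over} $\Q(\alpha^h)$ distinct from $\alpha$. The paper's reduction-mod-$h$-first organization has the advantage of carrying over verbatim to the more general Theorem~\ref{int3}, where one replaces $\Q$ by $\Q(\zeta)$; your more direct extraction of a root-of-unity ratio would require more reworking in that setting. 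The converse directions are essentially identical.
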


We also derive the following theorem which works for more general algebraic numbers.

\begin{thm}\label{int3}
Let $\alpha$ be a nonzero algebraic number, $h$ be the order of the torsion group of the splitting field of the minimal polynomial of $\alpha$ and $\zeta$ a primitive $h$-th root of unity. Then, $Tr_{\Q(\alpha,\zeta)/\Q(\zeta)}( \alpha^n) = 0$ for infinitely many $n \in \N$ if and only if $\alpha$ does not belong to the field $\Q(\alpha^h,\zeta)$. 
\end{thm}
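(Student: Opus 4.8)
The plan is to convert the vanishing of the trace into a statement about power sums and to exploit the group structure of the $h$-th roots of $\alpha^h$. Let $K$ be the splitting field over $\Q$ of the minimal polynomial of $\alpha$; by hypothesis its group of roots of unity is cyclic of order $h$, and I may take it to be $\langle\zeta\rangle$, so that $\zeta\in K$ and every root of unity in $K$ has order dividing $h$. I would set $L=\Q(\alpha^h,\zeta)=\Q(\zeta)(\alpha^h)$ and $M=\Q(\alpha,\zeta)=L(\alpha)$, both subfields of $K$, and let $e=[M:L]$, so that $\alpha\in\Q(\alpha^h,\zeta)$ exactly when $e=1$. Write $T_n=Tr_{\Q(\alpha,\zeta)/\Q(\zeta)}(\alpha^n)$.

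For the implication ``$\alpha\notin\Q(\alpha^h,\zeta)\Rightarrow T_n=0$ for infinitely many $n$'', I would first note that the minimal polynomial of $\alpha$ over $L$ divides $X^h-\alpha^h=\prod_{j=0}^{h-1}(X-\zeta^j\alpha)$, a product of distinct linear factors over $M$; hence it equals $\prod_{j\in S}(X-\zeta^j\alpha)$ for some $S\subseteq\Z/h\Z$ with $0\in S$ and $|S|=e$. A short Galois argument then shows $S$ is a subgroup: each $\sigma\in\mathrm{Gal}(\overline{\Q}/L)$ fixes $\zeta$ and sends $\alpha$ to $\zeta^{s(\sigma)}\alpha$ with $s(\sigma)\in S$, the assignment $\sigma\mapsto s(\sigma)$ is a group homomorphism, and it surjects onto $S$ by irreducibility of the minimal polynomial. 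So $S=\langle h/e\rangle$, and the tower formula for traces gives
\[
T_n=Tr_{L/\Q(\zeta)}\bigl(Tr_{M/L}(\alpha^n)\bigr),\qquad
Tr_{M/L}(\alpha^n)=\alpha^n\sum_{k=0}^{e-1}\bigl(\zeta^{h/e}\bigr)^{kn}.
\]
Since $\zeta^{h/e}$ is a primitive $e$-th root of unity, this geometric sum vanishes whenever $e\nmid n$, forcing $Tr_{M/L}(\alpha^n)=0$ and hence $T_n=0$; as $e\ge2$, this holds for infinitely many $n$.

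For the converse I would argue contrapositively. Assuming $\alpha\in\Q(\alpha^h,\zeta)$, one has $T_n=\sum_{i=1}^m\beta_i^n$ with $\beta_1,\dots,\beta_m$ the distinct conjugates of $\alpha$ over $\Q(\zeta)$, all nonzero since $\alpha\ne0$. The key point is that no ratio $\beta_i/\beta_j$ with $i\ne j$ is a root of unity: such a ratio would lie in $\langle\zeta\rangle$, forcing $\beta_i^h=\beta_j^h$; extending to automorphisms of $K$ the $\Q(\zeta)$-embeddings of $M$ that carry $\alpha$ to $\beta_i$, resp.\ to $\beta_j$, their quotient fixes $\alpha^h$ and $\zeta$, hence fixes $\Q(\alpha^h,\zeta)\ni\alpha$, so $\beta_i=\beta_j$, a contradiction. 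Thus $(T_n)$ is a nondegenerate linear recurrence over $\Q(\zeta)$, and by the Skolem--Mahler--Lech theorem it vanishes for only finitely many $n$. Combining the two implications proves the equivalence.

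I expect the Galois bookkeeping — the subgroup structure of $S$, the description of the $L$-conjugates of $\alpha$, the trace tower, and the non-degeneracy in the case $\alpha\in L$ — to be entirely routine. The single genuinely external ingredient, and the step carrying the real content of the converse, is the appeal to Skolem--Mahler--Lech (equivalently, that a power sum no two of whose bases have a root-of-unity ratio has only finitely many zeros); everything else should be elementary and self-contained.
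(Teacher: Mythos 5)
Your proof is correct and rests on the same two ingredients as the paper's: for the direction $\alpha\notin\Q(\alpha^h,\zeta)\Rightarrow T_n=0$ infinitely often, the Kummer-theoretic computation showing $Tr_{\Q(\alpha,\zeta)/\Q(\alpha^h,\zeta)}(\alpha^n)$ is $\alpha^n$ times a vanishing geometric sum of $e$-th roots of unity when $e\nmid n$ (your subgroup $S=\langle h/e\rangle$ argument is exactly the content of the paper's Lemma~\ref{lemzerotrace}, which it cites from Lang); and for the other direction, Skolem--Mahler--Lech plus a non-degeneracy check on ratios of conjugates. The one genuine structural difference is in the second direction. The paper argues forward from infinitely many zeros: it pigeonholes to a congruence class $a+h\N$, rewrites $Tr_{H/H_0}(\alpha^{a+hm})$ as a linear recurrence in $m$ with coefficients $\tau_i(Tr_{H/L}(\alpha^a))$ and ratios $\tau_i(\alpha^h)/\tau_j(\alpha^h)$ (checking these are not roots of unity by observing they are $h$-th powers), then uses SML to kill a coefficient and finally translates back via Lemma~\ref{lemzerotrace}. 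You instead take the contrapositive: assuming $\alpha\in L$, the trace $T_n=\sum_i\beta_i^n$ is already a power sum in $n$, and SML applies directly once you check no $\beta_i/\beta_j$ is a root of unity, which you do by a Galois-lift argument that uses $\alpha\in L$. Your route is cleaner here, as you avoid the congruence-class extraction and the vanishing-coefficient step entirely; the trade-off is that your non-degeneracy argument is specific to the case $\alpha\in L$, whereas the paper's non-degeneracy observation (ratios of $h$-th powers that are roots of unity of order dividing $h$ must equal $1$) is the same device used in several places in the paper and needs no case hypothesis. Both are valid.
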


Proof of these results are given in the penultimate section ( Section 4 )
of the paper.

\medskip
\noindent
The final theme of the paper which constitutes the last section of our work is devoted to a more careful study of the algebraic elements in the Hardy and Mahler sets. For instance, we give the following more refined description of the  pairs of algebraic numbers in the Hardy set $H$.
\smallskip

\begin{thm} \label{hardy+}
The elements of the set $H\cap\overline{\Q}^2$ precisely consists of all the pairs $(\lambda,\alpha)$ where $\alpha$ is a PV number and $\lambda\in\frac{1}{P'_\alpha(\alpha)}\Z\left[\alpha,\frac{1}{\alpha}\right]$ with $P_\alpha\in\Z[X]$ the minimal polynomial of $\alpha$ over $\Z$.
\end{thm}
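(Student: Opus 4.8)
The plan is to bootstrap from Theorem~\ref{hardy} — which already supplies the qualitative part, namely that $\alpha$ is a PV number and $\lambda\in\Q(\alpha)$ — and then to pin down the exact lattice using the classical description of the complementary module of the order $\Z[\alpha]$. Throughout $\lambda\ne 0$, as is required for membership in $H$, and since $\Q(\alpha)\subset\R$ for a real PV number $\alpha$, any such $\lambda\in\Q(\alpha)$ automatically lies in $\R^\times$. Suppose first that $(\lambda,\alpha)\in H\cap\overline\Q^2$. By Theorem~\ref{hardy}, $\alpha$ is a PV number; write $d$ for its degree, $\alpha=\alpha_1,\dots,\alpha_d$ for its conjugates with $|\alpha_i|<1$ for $i\ge 2$, and let $\sigma_1,\dots,\sigma_d$ be the embeddings of $\Q(\alpha)$ into $\C$ with $\sigma_i(\alpha)=\alpha_i$. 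Again by Theorem~\ref{hardy}, $\lambda\in\Q(\alpha)$; setting $\lambda_i:=\sigma_i(\lambda)$ (so $\lambda_1=\lambda$), the identity I will use repeatedly is
\[
Tr_{\Q(\alpha)/\Q}(\lambda\alpha^n)=\sum_{i=1}^d\lambda_i\alpha_i^n=\lambda\alpha^n+\sum_{i=2}^d\lambda_i\alpha_i^n,
\]
whose tail tends to $0$ as $n\to\infty$ because each $|\alpha_i|<1$.

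To show $\lambda\in\tfrac1{P'_\alpha(\alpha)}\Z[\alpha,1/\alpha]$, write $\lambda=h(\alpha)/D$ with $h\in\Z[X]$ and $D\in\N^\times$. Since $\alpha$, hence $h(\alpha)\alpha^n$, is an algebraic integer, $Tr_{\Q(\alpha)/\Q}(\lambda\alpha^n)\in\tfrac1D\Z$ for all $n$. From the displayed identity, from $\Vert\lambda\alpha^n\Vert\to 0$, and from the tail estimate I obtain $\Vert Tr_{\Q(\alpha)/\Q}(\lambda\alpha^n)\Vert\to 0$; as any nonintegral element of $\tfrac1D\Z$ is at distance $\ge 1/D$ from $\Z$, this forces $Tr_{\Q(\alpha)/\Q}(\lambda\alpha^n)\in\Z$ for all $n\ge n_0$, for some $n_0$. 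Consequently $Tr_{\Q(\alpha)/\Q}\bigl((\lambda\alpha^{n_0})z\bigr)\in\Z$ for every $z$ in the $\Z$-span of $1,\alpha,\alpha^2,\dots$, i.e.\ for every $z\in\Z[\alpha]$; equivalently, $\lambda\alpha^{n_0}$ belongs to the complementary module of the lattice $\Z[\alpha]$ with respect to the trace form. By the classical formula (via Euler's partial-fraction identity for $1/P_\alpha$) this complementary module is $\tfrac1{P'_\alpha(\alpha)}\Z[\alpha]$. Hence $\lambda\in\alpha^{-n_0}\cdot\tfrac1{P'_\alpha(\alpha)}\Z[\alpha]\subseteq\tfrac1{P'_\alpha(\alpha)}\Z[\alpha,1/\alpha]$, as claimed.

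Conversely, let $\alpha$ be a PV number and $\lambda\in\tfrac1{P'_\alpha(\alpha)}\Z[\alpha,1/\alpha]$ with $\lambda\ne 0$; I show $(\lambda,\alpha)\in H$. Clearing negative powers of $\alpha$, there is $k\in\N$ with $\alpha^k\lambda\in\tfrac1{P'_\alpha(\alpha)}\Z[\alpha]$; by the same formula, $Tr_{\Q(\alpha)/\Q}(\alpha^k\lambda\cdot z)\in\Z$ for all $z\in\Z[\alpha]$, and taking $z=\alpha^m$ gives $Tr_{\Q(\alpha)/\Q}(\lambda\alpha^n)\in\Z$ for every $n\ge k$. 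Feeding this back into the displayed identity,
\[
\Vert\lambda\alpha^n\Vert\le\Bigl|\sum_{i=2}^d\lambda_i\alpha_i^n\Bigr|\le\sum_{i=2}^d|\lambda_i|\,|\alpha_i|^n,
\]
which tends to $0$ as $n\to\infty$ since each $|\alpha_i|<1$; hence $(\lambda,\alpha)\in H$.

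The scheme is short precisely because Theorem~\ref{hardy} carries the analytic weight. Beyond it, the two inputs are the classical identification of the complementary module of $\Z[\alpha]$ with $\tfrac1{P'_\alpha(\alpha)}\Z[\alpha]$, and the existence of a single denominator $D$ valid for all the numbers $Tr_{\Q(\alpha)/\Q}(\lambda\alpha^n)$. I expect the latter to be the real point: it is what upgrades the metric statement ``$Tr_{\Q(\alpha)/\Q}(\lambda\alpha^n)$ is close to $\Z$'' to the arithmetic one ``$Tr_{\Q(\alpha)/\Q}(\lambda\alpha^n)\in\Z$ for all large $n$'', and it is this exact integrality that produces the precise module $\tfrac1{P'_\alpha(\alpha)}\Z[\alpha,1/\alpha]$ rather than merely the field $\Q(\alpha)$ given by Theorem~\ref{hardy}.
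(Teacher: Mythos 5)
Your proof is correct, and for the forward direction it takes a genuinely shorter route than the paper. The paper derives the integrality $Tr_{k/\Q}(\lambda\alpha^n)\in\Z$ (for $n$ large) from its periodicity machinery: Lemma~\ref{lemper} shows that the residues $Tr_{k/\Q}(b\lambda\alpha^n) \bmod b$ form an ultimately periodic sequence, Proposition~\ref{proper} translates this into a finite list of possible limit points $|i_\ell|/b$ for $\Vert\lambda\alpha^n\Vert$, and Corollary~\ref{corper} then observes that convergence to $0$ forces every $i_\ell$ to vanish. You bypass all of that with a single observation: since $\lambda=h(\alpha)/D$ with $h\in\Z[X]$ and $\alpha$ an algebraic integer, every trace $Tr_{k/\Q}(\lambda\alpha^n)$ lies in the fixed lattice $\frac1D\Z$, whose nonzero cosets stay at distance $\ge 1/D$ from $\Z$; combined with $\Vert Tr_{k/\Q}(\lambda\alpha^n)\Vert\to 0$ (which follows from $\Vert\lambda\alpha^n\Vert\to 0$ and the exponentially decaying tail $\sum_{i\ge 2}\lambda_i\alpha_i^n$), this immediately gives $Tr_{k/\Q}(\lambda\alpha^n)\in\Z$ for $n$ large. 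From there both proofs conclude identically via the identification of the complementary module $\Z[\alpha]'=\frac1{P'_\alpha(\alpha)}\Z[\alpha]$, and your converse direction coincides with the paper's. What the paper's heavier periodicity apparatus buys is reuse: the same Lemma~\ref{lemper} and its quantitative refinements drive Proposition~\ref{propersalem} (the Salem-number distribution results) and Corollary~\ref{corperM}, which feed into Theorem~\ref{mahler+}. For Theorem~\ref{hardy+} in isolation, your bounded-denominator argument is the more economical one.
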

\smallskip

As for algebraic elements in the Mahler set $M$, we can show the following.

\smallskip
\begin{thm} \label{mahler+}
We have $(\lambda,\alpha)\in M\cap\overline{\Q}^2$ if and only if there exists integers $s$ and $0\le t<s$ such that $\alpha^s$ is a PV number and $\lambda$ belongs to $\frac{1}{\alpha^{t}P'_{\alpha^{s}}(\alpha^{s})}\Z\left[\alpha^{s},\frac{1}{\alpha^{s}}\right]$ with $P_{\alpha^{s}}\in\Z[X]$ the minimal polynomial of $\alpha^{s}$ over $\Z$.
\end{thm}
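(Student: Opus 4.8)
The plan is to establish the two implications separately. Throughout, for a PV number $\beta$ let $P_\beta\in\Z[X]$ denote its monic minimal polynomial, and recall Euler's formula for the different of a monogenic order: the trace-dual of $\Z[\beta]$, i.e. $\{\xi\in\Q(\beta):\mathrm{Tr}_{\Q(\beta)/\Q}(\xi z)\in\Z\ \forall z\in\Z[\beta]\}$, equals $\frac{1}{P'_\beta(\beta)}\Z[\beta]$; in particular $\xi\in\frac{1}{P'_\beta(\beta)}\Z[\beta]$ forces $\mathrm{Tr}_{\Q(\beta)/\Q}(\xi\beta^j)\in\Z$ for all $j\ge 0$, and conversely. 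Recall also that, $\beta$ being PV with conjugates $\beta=\beta_1,\dots,\beta_d$ and $|\beta_i|<1$ for $i\ge 2$, one has for any fixed $\xi\in\Q(\beta)$ that $\mathrm{Tr}_{\Q(\beta)/\Q}(\xi\beta^m)=\xi\beta^m+\sum_{i\ge 2}\sigma_i(\xi)\beta_i^m$, where the $\sigma_i$ are the embeddings of $\Q(\beta)$ into $\C$ (so $\sigma_i(\beta)=\beta_i$), and the tail tends to $0$ geometrically in $m$.

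The reverse implication is then routine. Suppose $\beta:=\alpha^s$ is PV and $\mu:=\lambda\alpha^t\in\frac{1}{P'_\beta(\beta)}\Z[\beta,1/\beta]$. Expanding $\mu$ through its Laurent expression in $\beta$ shows $\mu\beta^m\in\frac{1}{P'_\beta(\beta)}\Z[\beta]$ for all $m$ large, hence $\mathrm{Tr}_{\Q(\beta)/\Q}(\mu\beta^m)\in\Z$; together with the geometric decay of the tail this yields $\Vert\mu\beta^m\Vert\le C\rho^m$ for some $\rho<1$ and all large $m$. Taking $n=ms+t$ gives $\lambda\alpha^n=\mu\beta^m$, so $\Vert\lambda\alpha^n\Vert<c^n$ for a suitable $0<c<1$ and infinitely many $n$, whence $(\lambda,\alpha)\in M$.

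For the direct implication, let $(\lambda,\alpha)\in M\cap\overline{\Q}^2$. Theorem~\ref{2} provides $s\in\N^\times$ with $\beta:=\alpha^s$ a PV number and, applied with $h=s$, infinitely many $n$ for which $\lambda\alpha^n\in\Q(\alpha^s)=\Q(\beta)$ and $\lambda\alpha^n$ is a pseudo-PV number. By pigeonhole a single residue $t$ with $0\le t<s$ is attained by infinitely many such $n$; for those, $\mu:=\lambda\alpha^t$ lies in $\Q(\beta)$ and is nonzero, and writing $n=ms+t$ and combining transitivity of the trace with the definition of a pseudo-PV number we get $v_m:=\mathrm{Tr}_{\Q(\beta)/\Q}(\mu\beta^m)\in\Z$ for infinitely many $m$. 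The crucial point is that the $v_m$ have uniformly bounded denominators: if $N\in\N^\times$ makes $N\mu$ an algebraic integer, then $Nv_m\in\Z$ for all $m$. Since $(v_m)_{m\ge 0}$ obeys the linear recurrence with monic integral characteristic polynomial $P_\beta$, the state vectors $(v_m,\dots,v_{m+d-1})\bmod\Z^d$ move under a fixed integer companion matrix inside the finite set $(\frac1N\Z/\Z)^d$, so $(v_m\bmod\Z)$ is eventually periodic in $m$; therefore $\{m:v_m\in\Z\}$ is eventually periodic, and being infinite it contains $\{m\ge M_0:\ m\equiv r\pmod q\}$ for suitable $M_0,r,q$. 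Now set $\gamma:=\beta^q=\alpha^{qs}$: it is again a PV number, and $\Q(\gamma)=\Q(\beta)$ because otherwise some conjugate $\omega\beta\neq\beta$ of $\beta$, with $\omega$ a root of unity, would have absolute value $|\beta|>1$, contradicting the PV property. Along the progression, $\mathrm{Tr}_{\Q(\gamma)/\Q}\big((\mu\beta^r\gamma^{k_0})\,\gamma^j\big)\in\Z$ for all $j\ge 0$; as $\{\gamma^j\}_{j\ge 0}$ generates $\Z[\gamma]$ over $\Z$, Euler's formula gives $\mu\beta^r\gamma^{k_0}\in\frac{1}{P'_\gamma(\gamma)}\Z[\gamma]$. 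Finally $\mu\beta^r=\lambda\alpha^{rs+t}$; reducing $rs+t$ modulo $s':=qs$ to $0\le t'<s'$ and dividing by a suitable nonnegative power of $\gamma$, we obtain $\lambda\in\frac{1}{\alpha^{t'}P'_{\alpha^{s'}}(\alpha^{s'})}\Z[\alpha^{s'},\frac{1}{\alpha^{s'}}]$, which is exactly the asserted form with $(s',t')$ in place of $(s,t)$.

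The main obstacle is this last passage in the direct implication, from ``$v_m\in\Z$ for infinitely many $m$'' to ``$v_m\in\Z$ along an entire arithmetic progression'', since only the latter can be fed into Euler's different formula. It is resolved by the two elementary facts that the traces $\mathrm{Tr}_{\Q(\beta)/\Q}(\mu\beta^m)$ have bounded denominators (so that ``$v_m\in\Z$'' is a congruence condition on $m$, hence eventually periodic) and that replacing $\beta$ by a power $\beta^q$ preserves both the PV property and the generated field $\Q(\beta)$. The remainder is bookkeeping together with the classical description of the inverse different of a monogenic order.
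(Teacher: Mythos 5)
Your proof is correct and follows essentially the same path as the paper's: Theorem~\ref{2} to get a PV power of $\alpha$ and rationality of $\lambda\alpha^n$ in $\Q(\alpha^s)$, pigeonhole on the residue of $n$ modulo $s$, then exploit eventual periodicity of $\bigl(Tr_{\Q(\beta)/\Q}(\mu\beta^m)\bmod\Z\bigr)$ to upgrade ``integral for infinitely many $m$'' to ``integral along a full arithmetic progression,'' and finally invoke Euler's formula for the inverse different of $\Z[\gamma]$. The only structural difference is that you re-derive the periodicity argument inline (bounded denominators, finite state space, integer companion matrix), whereas the paper isolates it as Lemma~\ref{lemper} and channels it through Proposition~\ref{proper} and Corollary~\ref{corperM}; the mathematical content is the same, and your explicit check that $\Q(\beta^q)=\Q(\beta)$ for a PV number $\beta$ is a small detail the paper leaves implicit inside Corollary~\ref{corperM}.
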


\begin{rmk}
The above theorems allow us to derive a result (see  Corollary \ref{corhardy+mahler+}) which is in the direction of the fourth question. Namely, we show that $H\cap\overline{\Q}^2\subset M$, which reduces the fourth question to Conjecture~\ref{conjfolk}.
\end{rmk}

\section{ Intermediate results }

We fix the notion of height of a non-zero algebraic number which we shall be working with. For any number field $K$, let $M_K$ be the set of all inequivalent places of $K$. The corresponding absolute values $|\cdot|_v$ are normalized so that $|\cdot|_v^{\frac{[K:\Q]}{[K_v:\Q_v]}}$ extends the usual archimedean or $p$-adic absolute value of $\Q$. Thus, the product formula for $\alpha\in K^\times$ reads $\prod_{v\in M_K}|\alpha|_v=1$ and the height $H(\alpha)$, defined as
$$
H(\alpha) := \prod_{v \in M_K} {\rm max}~\{1, |\alpha|_v \},
$$
is unambiguous and does not depend on the choice of $K$ containing $\alpha$.

\noindent
Similarly, for an integer  $n>1$ and  a non zero vector ${\overline{\alpha}} = (\alpha_1, \cdots, \alpha_n) \in K^n$, we set
$$
H(\overline{\alpha}) : = \prod_{v \in M_K} {\rm max}~\{|\alpha_1|_v, \cdots, |\alpha_n|_v \}.
$$

We will need the following lemma, which extends and improves \cite[Lemma 4]{cz}:
\begin{lem}\label{3}
Let $\lambda$ and $\alpha$ be algebraic numbers. Let $k := \Q(\lambda ,\alpha)$ and suppose that the trace $Tr_{k/\Q}( \lambda \alpha^n) $
is a non-zero integer for infinitely many $n$. Then $\alpha$ is necessarily
an algebraic integer.
\end{lem}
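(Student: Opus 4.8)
The plan is to argue by contradiction, in the spirit of the method of Corvaja and Zannier \cite{cz}. Assume $\alpha$ is \emph{not} an algebraic integer while $T_n := Tr_{k/\Q}(\lambda\alpha^n)$ is a non-zero rational integer for all $n$ in some infinite set $I$. First I would reduce to the case $\lambda\in\Q(\alpha)$: by the tower formula $Tr_{k/\Q}=Tr_{\Q(\alpha)/\Q}\circ Tr_{k/\Q(\alpha)}$ and the $\Q(\alpha)$-linearity of the inner trace, $T_n = Tr_{\Q(\alpha)/\Q}(\mu\alpha^n)$ with $\mu:=Tr_{k/\Q(\alpha)}(\lambda)\in\Q(\alpha)$; if $\mu=0$ then $T_n=0$ for all $n$, contrary to hypothesis, so $\mu\neq0$ and we may replace $\lambda$ by $\mu$, assuming henceforth $k=\Q(\alpha)$ and $\lambda\in k^\times$. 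Writing $\sigma_1,\dots,\sigma_d$ for the embeddings of $k$ and setting $\beta_j:=\sigma_j(\alpha)$, $c_j:=\sigma_j(\lambda)\neq0$, we then have $T_n=\sum_{j=1}^d c_j\beta_j^{\,n}\in\Z\setminus\{0\}$ for all $n\in I$; note that each $\beta_j$, being a conjugate of the non-integral number $\alpha$, is itself non-integral, hence has negative valuation at some prime.

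Next I would remove the root-of-unity degeneracies. Let $N$ be a common multiple of the orders of those ratios $\beta_i/\beta_j$ which are roots of unity, and partition the indices $\{1,\dots,d\}$ into classes under the (equivalence) relation ``$\beta_i/\beta_j$ is a root of unity''. Since $I$ is infinite it meets some residue class $r_0$ modulo $N$ in an infinite set $I_0$, and for $n=r_0+Nk$ one has $T_n=\sum_C\widetilde c_C\,\delta_C^{\,k}$, where $C$ runs over the classes, $\delta_C:=\beta_C^{N}$ for a chosen representative $\beta_C$ of $C$, and $\widetilde c_C:=\sum_{i\in C}c_i\beta_i^{r_0}$; here the $\delta_C$ are pairwise distinct and no ratio $\delta_C/\delta_{C'}$ is a root of unity. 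Discard the finitely many classes with $\widetilde c_C=0$: if none survives then $T_n\equiv0$ on $I_0$, which is impossible; so some class $C_0$ survives, and as $\beta_{C_0}$ is non-integral there is a prime $q$ with $v_q(\delta_{C_0})=N\,v_q(\beta_{C_0})<0$.

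Finally I would exploit this prime $q$. Fixing an embedding $\overline{\Q}\hookrightarrow\C_q$, put $m_q:=\min\{v_q(\delta_C):v_q(\delta_C)<0\}<0$ (the minimum over surviving classes) and pick $\tau\in\C_q$ with $v_q(\tau)=m_q$. Then the ``$q$-large part'' $\sum_{C:\,v_q(\delta_C)<0}\widetilde c_C\delta_C^{\,k}$ of $T_n$ equals $\tau^{\,k}\big(S_k+o(1)\big)$ as $k\to\infty$, where $S_k:=\sum_{C:\,v_q(\delta_C)=m_q}\widetilde c_C(\delta_C/\tau)^{\,k}$ is a \emph{non-degenerate} linear recurrence over a number field, with non-zero coefficients and characteristic roots that are $q$-adic units. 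Hence $S_k\neq0$ for all large $k$, and moreover $v_q(S_k)=o(k)$; this sublinear bound on the $q$-adic valuation of a non-degenerate recurrence is the one genuinely substantial ingredient, supplied by the ($q$-adic) Subspace Theorem exactly as in \cite{cz}. Consequently the $q$-large part of $T_n$ has valuation $m_q k+o(k)\to-\infty$, whereas the remaining part has $v_q$ bounded below by $\min_C v_q(\widetilde c_C)$; so $v_q(T_n)=m_q k+o(k)\to-\infty$ along the infinite set $I_0$, contradicting $T_n\in\Z$. This forces $\alpha$ to be an algebraic integer.

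I expect the main obstacle to be precisely the bound $v_q(S_k)=o(k)$ for non-degenerate linear recurrences, for which the Subspace Theorem seems unavoidable; the surrounding argument --- reducing to $\lambda\in\Q(\alpha)$ and passing to an arithmetic progression to extract a non-degenerate sum --- is routine but indispensable, and is exactly where one uses that $I$ is infinite and that every $c_j=\sigma_j(\lambda)$ is non-zero, i.e.\ that $\lambda\neq0$.
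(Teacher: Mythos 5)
Your proof is correct and follows essentially the same route as the paper's: proceed by contradiction from a finite place $q$ of negative valuation, restrict to an arithmetic progression so that no ratio of surviving conjugate classes is a root of unity (your root-of-unity equivalence on conjugates coincides with the paper's grouping of Galois embeddings by their restriction to $\Q(\alpha^h)$, where $h$ is the torsion order of the Galois closure), and invoke the $p$-adic subspace theorem to drive $v_q(T_n)\to-\infty$. The only presentational difference is in how the subspace theorem is packaged: the paper applies Corvaja--Zannier's Lemma 1 to extract a vanishing subsum along an infinite subsequence and then concludes via Skolem--Mahler--Lech, whereas you cite directly the sublinear valuation bound $v_q(S_k)=o(k)$ for a non-degenerate power sum with $q$-adic unit roots, a consequence of the same tool which subsumes the Skolem--Mahler--Lech step.
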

\begin{proof}
The proof follows from that  of \cite[Lemma 4]{cz} with some minor modifications.
We just need to work with the field $k= \Q(\lambda, \alpha)$. Finally, an extra argument ensures that $\alpha$ cannot be the root of a (non-integral) 
rational number and hence must be an algebraic integer.

Let $K$ be the Galois closure of the extension $k/\Q$ and $h$ the order of the torsion group of $K^\times$. Let $I\subset\N$ be the set of exponents such that $Tr_{k/\Q}( \lambda \alpha^n)$ is a non-zero integer. Since $I$ is infinite, there exists an integer $c\in\{0,\dots,h-1\}$ and an infinite set $J\subset\N$ such that $c+hm\in I$ for all $m\in J$. Let $d=[\Q(\alpha^h):\Q]$. 

We first deal with the case $d=1$. Then $\alpha^h=\frac{a}{b}$ where $a$ and $b$ are co-prime integers. We can write
$$
Tr_{k/\Q}( \lambda\alpha^{c+hm}) = Tr_{k/\Q}(\lambda\alpha^c)\left(\frac{a}{b}\right)^{m}.
$$
For $m\in J$ we have $c+hm\in I$, thus $Tr_{k/\Q}( \lambda\alpha^{c+hm})$ is a non-zero integer by hypothesis and $Tr_{k/\Q}( \lambda\alpha^c)$ is a fixed rational number. The above equality implies, as $m$ tends to infinity in $J$, $b=1$ and $\alpha$ is an algebraic integer (in fact a root of a rational integer).

In the case $d>1$, let $\sigma_1,\dots,\sigma_d$ be a complete set of representatives of ${\rm Gal}(K/\Q)$ modulo the subgroup fixing $\Q(\alpha^h)$. For $i=1,\dots,d$, let $T_i$ be a complete set of representatives of ${\rm Gal}(K/\Q)$ modulo the subgroup fixing $k$ that coincides with $\sigma_i$ modulo the subgroup fixing $\Q(\alpha^h)$. In particular, $T_1\cup\dots\cup T_d$ is a complete set of representatives of ${\rm Gal}(K/\Q)$ modulo the subgroup fixing $k$ and we can write
$$
Tr_{k/\Q}( \lambda\alpha^{c+hm}) = \sum_{i=1}^d\left(\sum_{\tau\in T_i}\tau(\lambda\alpha^c)\right)\sigma_i(\alpha^{hm}) = A_1\sigma_1(\alpha^{hm})+\dots+A_d\sigma_d(\alpha^{hm}),
$$
where we have set $A_i=\sum_{\tau\in T_i}\tau(\lambda\alpha^c)$. We now proceed by contradiction, assuming $|\alpha|_v>1$ for some finite place $v$ of $k$. For $m\in J$ we have $c+hm\in I$, thus $Tr_{k/\Q}( \lambda\alpha^{c+hm})$ is a non-zero integer by hypothesis, and the coefficients $A_i$ cannot be all zero. Furthermore, for $\varepsilon<\frac{\log|\alpha|_v}{\log H(\alpha)}$ and $m\in J$ sufficiently large we have
$$
|A_1\sigma_1(\alpha^{hm})+\dots+A_d\sigma_d(\alpha^{hm})|_v = |Tr_{k/\Q}( \lambda\alpha^{c+hm})|_v \le 1 < |\alpha^{hm}|_v H(\alpha^{hm})^{-\varepsilon}.
$$
Applying Lemma 1 of~\cite{cz} with $\Xi=\{\alpha^{hm};m\in J\}$ and $S$ a suitable Galois set of places of $K$ such that $\alpha$ is an $S$-unit, we obtain a non-trivial equation satisfied by infinitely many $m\in J$
$$
a_1\sigma_1(\alpha^h)^m+\dots+a_d\sigma_d(\alpha^h)^m = 0,\quad a_i\in K.
$$
We may then apply Skolem-Mahler-Lech's theorem \cite[Corollary 7.2, page 193]{cetraro} which entails that
$$\frac{\sigma_i(\alpha^h)}{\sigma_j(\alpha^h)} = \left(\frac{\sigma_i(\alpha)}{\sigma_j(\alpha)}\right)^h$$
is a root of unity for two distinct indices $i\not=j$. But this ratio must then be $1$ because $\frac{\sigma_i(\alpha)}{\sigma_j(\alpha)}$ is a root of unity in $K$ and $h$ is the exponent of the torsion group of $K^\times$. This implies that $\sigma_i$ and $\sigma_j$ coincide on $\Q(\alpha^h)$, contradicting their definition. This contradiction shows that $|\alpha|_v\le1$ for all finite place $v$ of $k$, hence $\alpha$ is an algebraic integer.
\end{proof}

We note that in the above theorem, a priori  there need not be  
any relation between the algebraic
numbers $\lambda$ and $\alpha$. In particular,
$\lambda$ need not be in $\Q(\alpha)$.

Finally, we note the following theorem 
which we shall need. This is a special case of a deep 
theorem  of Corvaja and Zannier ( \cite{cz}, see Main Theorem ).

\begin{thm}\label{cz2}
Let $\Gamma$ be a finitely generated subgroup of ~ $\overline{\Q}^{\times}$, $\lambda \in \overline{\Q}^{\times}$ and $\epsilon >0$ be real. Suppose that the following diophantine inequality
$$
0 < \Vert  \lambda  u\Vert  < H(u)^{- \epsilon}
$$
has infinitely many solutions $ u \in \Gamma$ with $|\lambda u| > 1$. Then all but finitely
many such $\lambda u$ are pseudo-PV numbers.
\end{thm}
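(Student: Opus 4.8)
The plan is to derive Theorem~\ref{cz2} from the Main Theorem of~\cite{cz}, of which it is a specialization; essentially no new mathematics is needed beyond a short normalization of the hypotheses. The first point is that one may assume $\lambda$ itself lies in the group: replace $\Gamma$ by $\Gamma':=\langle\Gamma,\lambda\rangle$, which is again a finitely generated subgroup of $\overline{\Q}^\times$, so that each solution $u\in\Gamma$ of the displayed inequality produces $w:=\lambda u\in\Gamma'$ with $|w|>1$. Since the finitely many generators of $\Gamma'$ lie in a fixed number field $K$, we have $\Gamma'\subset K^\times$, and Northcott's theorem then guarantees that for every constant $C$ only finitely many elements of $\Gamma'$ have height at most $C$.

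Next I would transfer the approximation exponent from $H(u)$ to $H(w)$. By the quasi-multiplicativity of the height, $H(w)\le H(\lambda)H(u)$, so as soon as $H(u)\ge H(\lambda)$ one gets $H(w)\le H(u)^2$ and hence $\Vert w\Vert=\Vert\lambda u\Vert<H(u)^{-\epsilon}\le H(w)^{-\epsilon/2}$; the solutions $u$ with $H(u)<H(\lambda)$ are finite in number by the previous paragraph and may be discarded. After this, the numbers $w=\lambda u$ form an infinite set of solutions $w\in\Gamma'$ of $0<\Vert w\Vert<H(w)^{-\epsilon/2}$ with $|w|>1$, and it remains to check that all but finitely many of them are pseudo-PV numbers. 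This is exactly the conclusion of the Main Theorem of~\cite{cz}, applied to the group $\Gamma'$ and the exponent $\epsilon/2$, which finishes the argument. (If the result quoted from~\cite{cz} is already phrased with $H(\lambda u)$ or already permits a multiplier, the normalization steps above are simply omitted.)

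I would take the Main Theorem of~\cite{cz} itself as a black box, and the genuinely deep input is entirely contained in it: it rests on the $p$-adic Subspace Theorem of Schmidt and Schlickewei, applied to a delicately weighted family of linear forms in the $S$-conjugates of the generators of $\Gamma'$, set up so that the smallness of $\Vert w\Vert$ forces the exponent vectors of the solutions into finitely many proper rational subspaces; on each such subspace the surviving solutions satisfy a fixed linear relation among the conjugates of $w$, and Skolem--Mahler--Lech together with the maximality of the torsion exponent of $K^\times$ — the same mechanism that already appears in the proof of Lemma~\ref{3} — then shows that, along the corresponding subfamily, every conjugate of $w$ other than $w$ has absolute value strictly less than $1$ while $\mathrm{Tr}_{\Q(w)/\Q}(w)$ equals the rational integer nearest to $w$, so that $w$ is pseudo-PV. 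Accordingly, the only step requiring care in the present deduction is the height bookkeeping of the first two paragraphs, namely that enlarging $\Gamma$ to $\Gamma'$ and replacing $H(u)$ by $H(\lambda u)$ costs at most a constant factor in the exponent together with finitely many solutions; the hard part lies in \cite{cz}, not in this reduction.
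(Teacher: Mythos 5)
The paper gives no proof of Theorem~\ref{cz2} beyond the citation: it simply records that the statement is a special case of the Main Theorem of Corvaja--Zannier~\cite{cz}. Your proposal takes exactly the same route, treating that Main Theorem as a black box; the extra normalization you supply (passing to $\Gamma'=\langle\Gamma,\lambda\rangle$, using $H(\lambda u)\le H(\lambda)H(u)$ together with Northcott to discard the small solutions, and trading the exponent $\epsilon$ in $H(u)$ for $\epsilon/2$ in $H(\lambda u)$) is correct but, as you yourself anticipate, superfluous, since the Main Theorem of~\cite{cz} is already phrased with a multiplier $\lambda$ and with $H(u)$ in the bound, so Theorem~\ref{cz2} is a literal specialization of it.
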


\section{ Proof of  Theorem \ref{2}}

Proof of Theorem \ref{2} rests on the following intermediate results.

\begin{prop}\label{propspv}
Let $\alpha_1,\dots,\alpha_r$ be non-zero algebraic numbers of modulus $1$ and $\lambda_1,\dots,\lambda_r$ be non-zero algebraic numbers. For $\varepsilon>0$, assume that there exists an infinite set $I\subset\N^\times$ such that for all $n \in I$,
 $ \lambda_1\alpha_1^n+\dots+\lambda_r\alpha_r^n$ is a real number and 
$$\Vert \lambda_1\alpha_1^n+\dots+\lambda_r\alpha_r^n\Vert< {\rm e}^{-\varepsilon n}.$$
Then there exists algebraic numbers $A_0,A_1,\dots,A_r$ not all zero such that for infinitely many $n\in I$, one has
$$A_0+A_1\alpha_1^n+\dots+A_r\alpha_r^n = 0.$$
\end{prop}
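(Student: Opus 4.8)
The plan is to reformulate the hypothesis as a nearest‐integer approximation and feed it to the $p$-adic Subspace Theorem. Set $u_n:=\lambda_1\alpha_1^n+\dots+\lambda_r\alpha_r^n$, which by assumption is real, and let $p_n\in\Z$ be its nearest integer. Since $|\alpha_i|=1$ we have $|u_n|\le\sum_i|\lambda_i|$, so the integers $p_n$ lie in a fixed finite set, and $|u_n-p_n|=\Vert u_n\Vert<{\rm e}^{-\varepsilon n}$ for $n\in I$. Fix a number field $K$ containing $\lambda_1,\dots,\lambda_r,\alpha_1,\dots,\alpha_r$, let $v_0$ be the archimedean place of $K$ at which $u_n$ is the prescribed real number and each $\alpha_i$ has absolute value $1$, and let $S$ consist of all archimedean places of $K$ together with all finite places at which some $\alpha_i$ is not a unit. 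Then each $\alpha_i$ (hence each $\alpha_i^n$) is an $S$-unit, so by the product formula $\prod_{v\in S}|\alpha_i^n|_v=1$; combined with $|\alpha_i|_{v_0}=1$ this gives $\prod_{v\in S\setminus\{v_0\}}|\alpha_i^n|_v=1$. Moreover, writing $\overline{x}_n:=(\alpha_1^n,\dots,\alpha_r^n,p_n)$, one has $\max_j|(\overline{x}_n)_j|_v=1$ for every $v\notin S$ (as $|p_n|_v\le1$), whence $\prod_{v\in S}\max_j|(\overline{x}_n)_j|_v=H(\overline{x}_n)$.

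Next I would choose, at $v_0$, the $r+1$ linearly independent linear forms $X_1,\dots,X_r$ and $L:=\lambda_1X_1+\dots+\lambda_rX_r-X_{r+1}$, and at every other place of $S$ the coordinate forms $X_1,\dots,X_{r+1}$. Since $L(\overline{x}_n)=u_n-p_n$ and $|\alpha_j^n|_{v_0}=1$, the $v_0$-contribution to the double product of evaluated forms equals $|u_n-p_n|_{v_0}<{\rm e}^{-\varepsilon' n}$ for a fixed $\varepsilon'>0$ depending on $\varepsilon$ and the normalisation; and the contribution of the remaining places collapses, using the cancellations above, to $\prod_{v\in S\setminus\{v_0\}}|p_n|_v\le H(p_n)=|p_n|\ll1$. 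Normalising by $H(\overline{x}_n)^{r+1}\ge1$, one obtains for $n\in I$ large and any fixed $\delta\in(0,r+1)$ — using only $H(\overline{x}_n)\ge1$ — the Subspace Theorem inequality
$$\prod_{v\in S}\prod_{i=1}^{r+1}\frac{|L_{i,v}(\overline{x}_n)|_v}{\max_j|(\overline{x}_n)_j|_v}\ \ll\ {\rm e}^{-\varepsilon' n}\,H(\overline{x}_n)^{-(r+1)}\ <\ H(\overline{x}_n)^{-\delta}.$$
The Subspace Theorem then confines these $\overline{x}_n$ to finitely many proper subspaces of $K^{r+1}$, so some fixed nonzero $(c_1,\dots,c_{r+1})\in K^{r+1}$ satisfies
$$c_1\alpha_1^n+\dots+c_r\alpha_r^n+c_{r+1}p_n=0$$
for infinitely many $n\in I$.

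It remains to eliminate $p_n$. If $c_{r+1}=0$ this already is the sought relation, with $A_0=0$. Otherwise $p_n$ is determined by $\alpha_1^n,\dots,\alpha_r^n$, and taking finitely many values it equals a fixed integer $p$ for infinitely many of these $n$; since we may assume $p_n\ne0$ for all but finitely many $n\in I$ (the complementary case is handled below), $p\ne0$, and $c_1\alpha_1^n+\dots+c_r\alpha_r^n+c_{r+1}p=0$ is nontrivial with $A_0=c_{r+1}p\ne0$. Finally, if $p_n=0$ for infinitely many $n\in I$, then $|u_n|<{\rm e}^{-\varepsilon n}$ for these $n$, and the identical argument applied to the vectors $(\alpha_1^n,\dots,\alpha_r^n)\in K^r$ — using at $v_0$ the forms $X_2,\dots,X_r$ and $\lambda_1X_1+\dots+\lambda_rX_r$ — produces directly a nontrivial relation $c_1\alpha_1^n+\dots+c_r\alpha_r^n=0$ holding for infinitely many $n$, i.e.\ the claim with $A_0=0$.

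No deep obstacle is expected here; the steps needing attention are (i) the reformulation through the \emph{bounded} nearest integers $p_n$ — this is the one place where the hypothesis $|\alpha_i|=1$ is genuinely used, both to bound $u_n$ and, through the finiteness of the values of $p_n$, to carry out the elimination of $p_n$ in the last step; (ii) the choice of linear forms at $v_0$ so that the product‑formula cancellations collapse the doubly indexed product to the single exponentially small factor $|u_n-p_n|_{v_0}$, which makes the Subspace Theorem inequality hold for \emph{any} fixed $\delta\in(0,r+1)$ regardless of how fast $H(\overline{x}_n)$ grows; and (iii) the routine bookkeeping of the normalisations of the absolute values. This intermediate step could alternatively be packaged by a direct appeal to Lemma~1 of \cite{cz}, exactly as in the proof of Lemma~\ref{3}.
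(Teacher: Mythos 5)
Your proof is correct and follows essentially the same approach as the paper: both reduce the hypothesis to a product-formula inequality for the $p$-adic Subspace Theorem, using the boundedness of the nearest integers (a consequence of $|\alpha_i|=1$) and a carefully chosen linear form at the distinguished archimedean place to isolate the exponentially small quantity. The only difference is purely bookkeeping — the paper passes to a subsequence where the nearest integer $N_n=\lambda_0$ is constant \emph{before} invoking the Subspace Theorem and works with the vector $(1,\alpha_1^n,\dots,\alpha_r^n)$, whereas you carry $p_n$ along as a bounded varying coordinate and eliminate it afterwards by a second passage to a subsequence (plus a small case split on whether $p_n=0$ infinitely often), which is slightly less economical but equally valid.
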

\begin{proof}
We begin by noting that if one of the  $\alpha_1,\dots,\alpha_r$ is a root of unity, the conclusion holds.
So we may assume that none of the  $\alpha_1,\dots,\alpha_r$ is a root of unity.

Let $N_n$ denote the integer closest to $\lambda_1\alpha_1^n+\dots+\lambda_r\alpha_r^n$ and observe that $|N_n|\le|\lambda_1|+\dots+|\lambda_r|$ takes only finitely many values. Thus, there exists an infinite subset $J\subset I$ and an integer $\lambda_0$ such that $|\lambda_0+\lambda_1\alpha_1^n+\dots+\lambda_r\alpha_r^n| = \Vert\lambda_1\alpha_1^n+\dots+\lambda_r\alpha_r^n\Vert < {\rm e}^{-\varepsilon n}$ for $n\in J$. Set $\alpha_0=1$.

Let $K\subset\C$ be a Galois number field containing $\lambda_1,\dots,\lambda_r,\alpha_1,\dots,\alpha_r$ and $S$ a finite set of absolute values of $K$, containing all the archimedean ones and such that $\alpha_1,\dots,\alpha_r$ are $S$-units. Let $v_0$ be the (archimedean) absolute value given by the given inclusion of $K$ in $\C$. If $\lambda_0\not=0$ we set $i_0=0$ and otherwise $i_0=1$. We set
$$L_{v_0,i_0}(\overline{x}) = \lambda_0x_0+\dots+\lambda_rx_r$$
with $\overline{x}=(x_0,\dots,x_r)$ and for $(v,i)\in S\times\{0,\dots,r\}$, $(v,i)\not=(v_0,i_0)$, $L_{v,i}(\overline{x})=x_i$. Observe that for each $v$ the forms $L_{v,0},\dots,L_{v,r}$ are linearly independent. Then, with $\overline{\alpha}^n=(\alpha_0^n,\dots,\alpha_r^n)$ and $\Vert\overline{\alpha}\Vert_v=\max(|\alpha_0|_v,\dots,|\alpha_r|_v)$, we have
\begin{align*}
\prod_{v\in S}\prod_{i=0}^r\frac{|L_{v,i}(\overline{\alpha}^n)|_v}{\Vert\overline{\alpha}^n\Vert_v}
&= \frac{|L_{v_0,i_0}(\overline{\alpha}^n)|_{v_0}}{|\alpha_{i_0}|_{v_0}^n}\prod_{i=0}^r\prod_{v\in S}\frac{|\alpha_i|_v^n}{\Vert\overline{\alpha}\Vert_v^n}\\
&= \frac{|L_{v_0,i_0}(\overline{\alpha}^n)|}{|\alpha_{i_0}|^n}H(\overline{\alpha})^{-n(r+1)},
\end{align*}
since $\prod_{v\in S}|\alpha_i|_v=1$ by the product formula, and $\prod_{v\in S}\Vert\overline{\alpha}\Vert_v = H(\overline{\alpha})$ because the components of $\overline{\alpha}$ are $S$-units. 

For $n\in J$ we further have $|L_{v_0,i_0}(\overline{\alpha}^n)|=\Vert\lambda_1\alpha_1^n+\dots+\lambda_r\alpha_r^n\Vert<{\rm e}^{-\varepsilon n}$ and we observe that $|\alpha_{i_0}|=1$ in any case, thus
$$\prod_{v\in S}\prod_{i=0}^r\frac{|L_{v,i}(\overline{\alpha}^n)|_v}{\Vert\overline{\alpha}^n\Vert_v} < H(\overline{\alpha}^n)^{-r-1-\varepsilon /\log H(\overline{\alpha})}.$$
 
The $p$-adic subspace theorem~\cite[Chap. V, Thm. 1D', page 178]{schmidt} then ensures that for all $n\in J$ the point $(\alpha_0^n:\dots:\alpha_r^n)\in\P_n(K)$ lies in the union of finitely many proper subspaces of $\P_n(K)$. One of these must contain infinitely many points $(\alpha_0^n:\dots:\alpha_r^n)$ for $n\in J$ and one of its equations can be written as $A_0x_0+\dots+A_rx_r=0$ with $A_0,\dots,A_r\in K$ not all zero.
\end{proof}

\begin{lem}\label{lemspv}
Let $\alpha$ be a pseudo-PV or a Salem number and let $\alpha=\alpha_1,\dots,\alpha_d$ be its conjugates. For all $A_0,A_1,\dots,A_d\in\overline{\Q}$, not all zero, there are only
finitely many $n \in \N$ such that $$A_0+A_1\alpha_1^n+\dots+A_d\alpha_d^n =0.$$
\end{lem}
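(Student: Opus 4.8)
The plan is to exploit the fact that, whether $\alpha$ is a pseudo-PV number or a Salem number, $\alpha=\alpha_1$ is the \emph{unique} conjugate of absolute value strictly greater than $1$, all the remaining conjugates $\alpha_2,\dots,\alpha_d$ having absolute value at most $1$; this is immediate from the two definitions recalled above. Write $u_n:=A_0+A_1\alpha_1^n+\dots+A_d\alpha_d^n$. First I would dispose of the case $A_1=\dots=A_d=0$: then, the $A_i$ not being all zero, $u_n=A_0$ is a nonzero constant and there is nothing to prove. So I may assume $A_j\neq0$ for some $j\in\{1,\dots,d\}$, and it suffices to show that $u_n\neq0$ for all sufficiently large $n$.

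Fix such a $j$ and choose a number field $K$, Galois over $\Q$, containing $\alpha_1,\dots,\alpha_d$ and $A_0,\dots,A_d$. Since the minimal polynomial of $\alpha$ over $\Q$ is irreducible, ${\rm Gal}(K/\Q)$ permutes the conjugates $\alpha_1,\dots,\alpha_d$ transitively, so I can pick $\sigma\in{\rm Gal}(K/\Q)$ with $\sigma(\alpha_j)=\alpha_1$. Letting $\pi$ be the permutation of $\{1,\dots,d\}$ with $\sigma(\alpha_i)=\alpha_{\pi(i)}$, we have $\pi(j)=1$, and since $\pi$ is a bijection, $\alpha_{\pi(i)}\neq\alpha_1$, hence $|\alpha_{\pi(i)}|\le1$, for every $i\neq j$. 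For any $n$ with $u_n=0$ we get $\sigma(u_n)=0$, i.e.
$$
\sigma(A_j)\,\alpha_1^n = -\sigma(A_0)-\sum_{i\neq j}\sigma(A_i)\,\alpha_{\pi(i)}^n.
$$
Here the left-hand side has absolute value $|\sigma(A_j)|\,|\alpha_1|^n$, which tends to infinity because $|\sigma(A_j)|>0$ and $|\alpha_1|>1$, while the right-hand side is bounded in absolute value by the constant $|\sigma(A_0)|+\sum_{i\neq j}|\sigma(A_i)|$. Hence the displayed equality, and therefore the equation $u_n=0$, can hold for only finitely many $n$.

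The whole argument is elementary once the decisive structural input is isolated, namely that a pseudo-PV or Salem number is the unique conjugate in its Galois orbit of absolute value exceeding $1$. The only step requiring a little care is the appeal to Galois transitivity to transport a nonzero coefficient $A_j$ onto the position of the dominant conjugate $\alpha_1$; after that, a crude triangle-inequality estimate finishes everything. I note that one could alternatively derive the lemma from the Skolem--Mahler--Lech theorem (as in the proof of Lemma~\ref{3}), after first checking --- by the same Galois argument --- that no quotient $\alpha_i/\alpha_j$ with $i\neq j$ is a root of unity, but the direct estimate above is shorter and I would prefer it.
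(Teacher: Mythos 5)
Your proof is correct and is essentially the same as the paper's: both split off the degenerate case $A_1=\dots=A_d=0$, then apply a Galois automorphism carrying a conjugate with nonzero coefficient to the dominant conjugate $\alpha$, and finish with the observation that the $\alpha^n$-term tends to infinity while the remaining terms stay bounded. The only cosmetic difference is that the paper first divides the relation by $A_i$ before applying $\sigma$, whereas you keep the coefficient $\sigma(A_j)$ on the left; the argument is the same.
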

\begin{proof}
The result is clear if $A_1=\dots=A_d=0$, since then the hypothesis is $A_0\not=0$. Otherwise, applying an automorphism $\sigma$ of $\overline{\Q}$ over $\Q$ sending some $\alpha_i$ with $A_i\not=0$ to $\alpha$, we get an equation
$$\alpha^n=-\sigma(A_0/A_i)-\sum_{\stackrel{1\le j\le d}{\scriptscriptstyle j\not=i}}\sigma(A_j/A_i)\sigma(\alpha_j)^n.$$
But the $\sigma(A_j/A_i)$ are independent of $n$ and the $\sigma(\alpha_j)$, $j\not=i$, are the conjugates of $\alpha$ distinct from $\alpha$, thus of absolute value bounded by $1$. Since $|\alpha|>1$, the absolute value $|\alpha|^n$ of the left-hand side goes to infinity with $n$ whereas that of the right-hand side remains bounded. Therefore, the equality $A_0+A_1\alpha_1^n+\dots+A_d\alpha_d^n=0$ can hold only for $n$ bounded.
\end{proof}

We now have all the ingredients to prove Theorem \ref{2}.

\newpage
\noindent
{\bf{Proof of Theorem \ref{2}.}}

\smallskip
\noindent
Let $\lambda$ and $ \alpha$ be algebraic numbers such that  $(\lambda, \alpha) \in M$.
Thus there exists $ 0 <c <1$ and infinitely many $n \in \N$ such that
$\Vert \lambda \alpha^n\Vert  < c^{n}$. If $\Vert \lambda \alpha^n\Vert  = 0$ for infinitely many $n$, then $\alpha$ is necessarily the root of an integer and also $\lambda\alpha^n \in \Z\setminus\{0\}$  for infinitely many $n$.

So we may assume that there are infinitely many $n$ such that $0 < \Vert \lambda \alpha^n\Vert  < c^{n}$. Furthermore $|\lambda\alpha^n|>1$ for $n$ large enough, since $|\alpha|>1$. Denote $J\subset\N$ the subset of $n$ for which these inequalities hold. There exists $\epsilon >0$ such that
$$ 0 < \Vert \lambda \alpha^n\Vert  <  H(\alpha^n)^{- \epsilon}$$
for $n\in J$. Thus, by Theorem \ref{cz2} with $\Gamma$ the subgroup generated by $\alpha$, $\lambda \alpha^n$ is a pseudo-PV number for all $n$ in $J$ except for a possible finite subset. Let $I\subset J$ be the infinite subset such that $\lambda\alpha^n$ is a pseudo-PV number for every $n\in I$. Since $\lambda\alpha^n$ tends to infinity whereas the absolute values of its other conjugates are bounded when $n$ tends to infinity, we may as well define the subset $I$ so that the trace of the pseudo-PV number $\lambda\alpha^n$ is non zero for $n\in I$. Then by Lemma \ref{3}, $\alpha$ is necessarily an algebraic integer.

Let $h\in\N^\times$ and $n\in I$ such that $n=mh+i$ for $0\le i<h$ and $m>\frac{r\log(H(\lambda\alpha^i))}{h\log(\alpha)}$ with $r=[\Q(\alpha^h,\lambda\alpha^i) :\Q(\alpha^h)]$. The identity map on $\Q(\alpha^h)$ gives rise to $r$ different embeddings of $\Q(\alpha^h,\lambda\alpha^i)$ over $\Q(\alpha^h)$ into  $\overline{\Q}$. But, with our condition on $m$, the corresponding $r$ embeddings of $\lambda\alpha^n$ have absolute values greater than $1$. Since $\lambda \alpha^n$ is a pseudo-PV number we deduce that $r=1$ and $\lambda\alpha^n \in \Q(\alpha^h)$ for $n\in I$. In particular, for $h=1$ we have $\lambda\in\Q(\alpha)$.

Assume a conjugate of $\alpha$ distinct from $\alpha$ has absolute value strictly greater than $1$. Write $\alpha'$ and $\lambda'\in\Q(\alpha')$ the corresponding conjugates of $\alpha$ and $\lambda\in\Q(\alpha)$. Since $\lambda\alpha^n$ is a pseudo-PV number for $n\in I$, we must have $\lambda\alpha^n=\lambda'\alpha'^n$ for $n\in I$ large enough. Forming the quotient of two such equalities for $m,n\in I$, $m<n$, we get $\alpha'^{n-m}=\alpha^{n-m}$. Let $s$ be the least common multiple of these exponents $m-n$ when $\alpha'$ runs over all the conjugates of $\alpha$ of absolute value strictly greater than $1$. Then $\alpha^s$ has exactly one conjugate of absolute value strictly greater than $1$. Since $\alpha$ is an algebraic integer,  $\alpha^s$ is either a PV or a Salem number.

Now suppose that  $s$ is an integer such that $\alpha^s$ is a Salem number. 
We  have  that there exists an $\epsilon >0 $ and  infinite set $I\subset\N^\times$ such that $\Vert\lambda\alpha^n\Vert<{\rm e}^{-\varepsilon n}$ for  all $n \in I$.
Then there exists $i\in\{0,\dots,s-1\}$ such that $$\Vert\lambda\alpha^{ms+i}\Vert<{\rm e}^{-\varepsilon (ms+i)}$$ for infinitely many $m$. By our earlier argument,
we know that $\lambda\alpha^i\in\Q(\alpha^s)$ is a rational fraction $r(\alpha^s)$ in $\alpha^s$. Let $\alpha_1,\dots,\alpha_{2d}$ be the conjugates of $\alpha^s$ of modulus $1$, that is all the conjugates except $\alpha^s$ and $\alpha^{-s}$, and $\lambda_j=r(\alpha_j)$, $j=1,\dots,2d$, be the corresponding conjugates of $\lambda\alpha^i$. If $N_m$ is the integer closest to $\lambda\alpha^{ms+i}$ and $q\in\N^\times$ a denominator of $\lambda$, we write
$$|Tr_{\Q(\alpha)/\Q}(q\lambda\alpha^{ms+i})-qN_m-q\lambda_1\alpha_1^{ms}-\dots-q\lambda_{2d}\alpha_{2d}^{ms}| < q{\rm e}^{-\varepsilon(ms+i)}+q|r(\alpha^{-s})\alpha^{-ms}|<{\rm e}^{-\varepsilon'm}$$
for some $\varepsilon'>0$ and $m$ large enough. Thus, $$\Vert q\lambda_1\alpha_1^{sm} +\dots+q\lambda_{2d}\alpha_{2d}^{sm}\Vert<{\rm e}^{-\varepsilon'm}$$ for infinitely many $m$. By Proposition~\ref{propspv}, there exists $A_0,\dots,A_{2d}\in\overline{\Q}$, not all zero, such that $$A_0+A_1\alpha_1^{sm}+\dots+A_{2d}\alpha_{2d}^{sm}=0$$ for infinitely many $m$. But by Lemma~\ref{lemspv}, this is not possible since $\alpha^s$ is a Salem number. This
completes the proof of Theorem \ref{2}.

\section{Proof of Theorems  \ref{int3real} and \ref{int3}}

For an algebraic number $\alpha $, let $k = \Q(\alpha)$ and let
$d$ be the degree of $\alpha$.
We shall need the following lemma for the proof of Theorem \ref{int3real}.

\begin{lem}\label{lemzerotracereal}
Let $\alpha$ be a real, positive algebraic number and $a,h\in\N^\times$, then the following three statements are equivalent:
\begin{itemize}
\item[1)] $Tr_{\Q(\alpha)/\Q(\alpha^h)}(\alpha^a)=0$;
\item[2)] $a$ is not divisible by $[\Q(\alpha):\Q(\alpha^h)]$.
\item[3)] $\alpha^a\notin\Q(\alpha^h)$;

\end{itemize}
\end{lem}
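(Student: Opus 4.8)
Throughout write $K=\Q(\alpha)$, $F=\Q(\alpha^h)$ and $n=[K:F]=[\Q(\alpha):\Q(\alpha^h)]$; note $F,K\subseteq\R$ since $\alpha>0$. The plan is to show that each of $(1)$, $(2)$, $(3)$ is equivalent to the single arithmetic condition $n\nmid a$, and the crux is to identify the minimal polynomial $P$ of $\alpha$ over $F$. Since $\alpha$ is a root of $X^h-\alpha^h=\prod_{j=0}^{h-1}(X-\zeta_h^j\alpha)\in F[X]$, with $\zeta_h=e^{2\pi i/h}$, we have $P(X)=\prod_{j\in S}(X-\zeta_h^j\alpha)$ for some $S\subseteq\{0,\dots,h-1\}$ with $0\in S$ and $|S|=n$.

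The key step is the claim $\alpha^n\in F$. For this I would examine the constant term $P(0)=(-1)^n\alpha^n\zeta_h^{\Sigma}$, where $\Sigma=\sum_{j\in S}j$. This lies in $F\subseteq\R$, and since $\alpha^n>0$ is real while $\zeta_h^{\Sigma}$ is a root of unity, reality forces $\zeta_h^{\Sigma}=\pm1$, whence $\alpha^n=\pm P(0)\in F$. Consequently $\alpha$ is a root of the monic polynomial $X^n-\alpha^n\in F[X]$ of degree $n=\deg P$, so $P(X)=X^n-\alpha^n$; in particular $X^n-\alpha^n$ is irreducible over $F$ and the $F$-conjugates of $\alpha$ are exactly $\zeta_n^i\alpha$, $i=0,\dots,n-1$, with $\zeta_n=e^{2\pi i/n}$.

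With the conjugates in hand the rest is a short computation. For $a\in\N^\times$,
$$Tr_{K/F}(\alpha^a)=\sum_{i=0}^{n-1}(\zeta_n^i\alpha)^a=\alpha^a\sum_{i=0}^{n-1}\zeta_n^{ia},$$
and the geometric sum equals $n$ when $n\mid a$ and $0$ otherwise; since $n\alpha^a\neq0$ this shows that $(1)$ holds if and only if $n\nmid a$, which is $(2)$. For $(3)$: the distinct $F$-conjugates of $\alpha^a$ are the distinct values $\zeta_n^{ia}\alpha^a$, and they collapse to the single value $\alpha^a$ exactly when $\zeta_n^a=1$, i.e.\ when $n\mid a$; hence $\alpha^a\in F$ if and only if $n\mid a$, so $(3)$ is again equivalent to $n\nmid a$. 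This closes the circle of equivalences.

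The only point that is not purely formal is the claim $\alpha^n\in F$, equivalently the irreducibility of $X^n-\alpha^n$ over $F$. In general $X^m-c$ can factor over a field (Vahlen--Capelli), so genuine input is required; here it is the positivity of $\alpha$ and $F\subseteq\R$, fed in through the constant term of $P$ above. I expect this to be the main --- indeed essentially the only --- obstacle; the remainder is Galois-theoretic bookkeeping together with a geometric series. As a sanity check, $\alpha=\sqrt2$, $h=2$ gives $n=2$ and $Tr_{\Q(\sqrt2)/\Q}((\sqrt2)^a)=0$ precisely for odd $a$, matching all three conditions.
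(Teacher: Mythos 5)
Your proof is correct, and it reaches the same key structural fact as the paper's argument --- that the minimal polynomial of $\alpha$ over $F=\Q(\alpha^h)$ is $X^n-\alpha^n$ with $n=[\Q(\alpha):F]$, so the $F$-conjugates of $\alpha$ are $\zeta_n^i\alpha$ and the rest is a geometric-sum computation --- but by a genuinely different route. The paper defines $h'$ to be the smallest positive integer with $\alpha^{h'}\in F$ and then checks the Vahlen--Capelli criterion (cited as Lang, Algebra, Ch.~VI, \S9, Thm.~9.1): no prime $p\mid h'$ has $\alpha^{h'}\in F^p$ (because the unique real positive $p$-th root would violate minimality of $h'$), and if $4\mid h'$ then $\alpha^{h'}\notin -4F^4$ (since $-4$ is not a fourth power in $\R$); this yields irreducibility of $X^{h'}-\alpha^{h'}$ over $F$, and hence $n=h'$. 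You instead work directly from $n=[\Q(\alpha):F]$: writing the minimal polynomial $P$ as a product of $n$ factors $(X-\zeta_h^j\alpha)$, you read off $P(0)=(-1)^n\alpha^n\zeta_h^\Sigma\in F\subseteq\R$, use $\alpha^n>0$ to force $\zeta_h^\Sigma=\pm1$, conclude $\alpha^n\in F$, and hence $P=X^n-\alpha^n$ by degree count. Both arguments lean on the same hypothesis (reality and positivity of $\alpha$, so that $F\subseteq\R$), but yours buys you a short, self-contained derivation of irreducibility that avoids any appeal to the Vahlen--Capelli theorem, at the cost of not immediately exhibiting $n$ as the minimal exponent with $\alpha^n\in F$ (though this drops out afterwards from your equivalence (3)). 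The trace computation and the equivalence with statement (3) proceed exactly as in the paper.
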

\begin{proof}
Set $L=\Q(\alpha^h)\subset\R$ and let $h'$ be the smallest positive integer such that $\alpha^{h'}\in L$. Obviously $1\le h'\le h$
and for any prime $p$ dividing $h'$, we have $\alpha^{h'}\notin L^p$. Otherwise $\alpha^{h'/p}$, which is the only real, positive $p$-th root of $\alpha^{h'}$, 
would belong to $L$, contradicting the minimality of $h'$. Furthermore, if $4\mid h'$ then $\alpha^{h'}\notin-4L^4$, because $-4$ is not a fourth power in $\R$. It then follows from \cite[Chap.VI, \S9, Theorem 9.1, page 297]{langalgebra}, that the polynomial 
$$X^{h'}-\alpha^{h'}$$
is irreducible in $L[X]$. Now $\alpha$ is the only real, positive root and its conjugates over $L$ are the numbers $\alpha\xi^i$, $i=0,\dots,h'-1$, where $\xi$ is a primitive $h'$-th root of unity. Thus, $h'$ is the degree of the extension $\Q(\alpha)/L$ and for all $a\in\N^\times$ we have
\begin{equation*}\label{forzerotracereal}
Tr_{\Q(\alpha)/L}( \alpha^a) = \alpha^a\sum_{i=0}^{h'-1}\xi^{ai} = \begin{cases}h'\alpha^a &\mbox{if $h'\mid a$}\cr 0 &\mbox{otherwise}\end{cases}
\end{equation*}
and $Tr_{\Q(\alpha)/L}(\alpha^a)=0$ if and only if $a\not\equiv 0\ (h')$, proving that the statements  1 and 2 are equivalent.

Now, since $\alpha^{h'}\in L$, the condition $\alpha^a\in L$ is equivalent to\footnote{Here and later $(m,n)$ denotes the gcd of $m$ and $n$.} $\alpha^{(h',a)}\in L$ and by the minimality of $h'$ this happens if and only if $(h',a)\ge h'$, that is $h'\mid a$. This shows that the statements 2 and 3 are equivalent, ending the proof of the lemma.
\end{proof}

\noindent
{\bf Proof of Theorem \ref{int3real}.}
\smallskip

Let $H$ be the splitting field of the minimal polynomial of $\alpha$ over $\Q$. So  $h$ is the order of the torsion group of $H^\times$. Set $L=\Q(\alpha^h)$ and $K=L(\alpha)=\Q(\alpha)\subset H$. 

In one direction, assume that $Tr_{K/\Q}(\alpha^n)=0$ for infinitely many $n\in\N$. Since $\alpha\in K$ we have $Tr_{H/K}(\alpha^n)=[H:K]\alpha^n$ and, since $\N$ is the disjoint union of the congruence classes $a+h\N$ for $a=0,\dots,h-1$, we deduce from the hypothesis that
$$
Tr_{H/\Q}(\alpha^{a+hm}) = Tr_{K/\Q}\left(Tr_{H/K}(\alpha^{a+hm})\right) = [H:K]Tr_{K/\Q}(\alpha^{a+hm}) = 0
$$
for infinitely many $m\in\N$ and some $0\le a<h$. Let $d=[L:\Q]$ and $\tau_1,\dots,\tau_d$ be all the embeddings of $L$ in $H$ over $\Q$. We express $Tr_{H/\Q}(\alpha^{a+hm})$ as the $m$-th term of a linear recurrence sequence:
$$
Tr_{H/\Q}( \alpha^{a+hm}) = Tr_{L/\Q}\left(Tr_{H/L}(\alpha^{a+hm})\right) = \sum_{i=1}^d\tau_i\left(Tr_{H/L}(\alpha^a)\right)\tau_i(\alpha^h)^m,
$$
which vanishes for infinitely many $m\in\N$. Observe that the ratios $\tau_i(\alpha^h)/\tau_j(\alpha^h)$, $i\not=j$, are not roots of unity, because being roots of unity and $h$ power in $H$ they would be $1$ but $\tau_i\not=\tau_j$ on $L$ and we must have $\tau_i(\alpha^h)\not=\tau_j(\alpha^h)$. The Skolem-Mahler-Lech's theorem \cite[Corollary 7.2, page 193]{cetraro} implies that $Tr_{H/L}(\alpha^a)=0$ and then $Tr_{K/L}(\alpha^a)=0$ because
 $$
Tr_{H/L}(\alpha^a) = Tr_{K/L}\left(Tr_{H/K}(\alpha^a)\right) = [H:K]Tr_{K/L}(\alpha^a).
 $$
But then Lemma~\ref{lemzerotracereal} ensures that $\alpha^a\notin\Q(\alpha^h)$ and hence in particular $\alpha\notin\Q(\alpha^h)$.

In the other direction, the same Lemma~\ref{lemzerotracereal} shows that if $\alpha\notin\Q(\alpha^h)$, then $Tr_{K/L}(\alpha)=0$ and
$$Tr_{K/\Q}(\alpha^{1+hm})=Tr_{L/\Q}\left(\alpha^{hm}Tr_{K/L}(\alpha)\right)=0$$
for all $m\in\N$. This completes the proof.

\begin{rmk}
The condition $\alpha\notin\Q(\alpha^h)$ is also equivalent to $\alpha^h\notin\Q(\alpha^h)^h$.
\end{rmk}

We now prove the following variant of Lemma \ref{lemzerotracereal} which we shall require
a little later. 

\begin{lem}\label{lemzerotrace}
Let $\alpha$ be a nonzero algebraic number, $a,h\in\N^\times$ and $\zeta$ a primitive $h$-th root of unity. 
Then $\alpha^a\notin\Q(\alpha^h,\zeta)$ if and only if $Tr_{\Q(\alpha,\zeta)/\Q(\alpha^h,\zeta)}(\alpha^a)=0$.

Furthermore, $Tr_{\Q(\alpha)/\Q}(\alpha^a)=0$ for all integer $a$ not divisible by $[\Q(\alpha,\zeta):\Q(\alpha^h,\zeta)]$.
\end{lem}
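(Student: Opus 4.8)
The plan is to follow the scheme of Lemma~\ref{lemzerotracereal}, the role of ``the unique real positive $p$-th root'' now being played by the observation that, after adjoining $\zeta$, the ground field already contains all the relevant roots of unity. Put $L=\Q(\alpha^h,\zeta)$ and $K=L(\alpha)=\Q(\alpha,\zeta)$, and let $h'$ be the smallest positive integer with $\alpha^{h'}\in L$. Since $\alpha^h\in L$, writing $(h',h)$ as a $\Z$-linear combination of $h'$ and $h$ gives $\alpha^{(h',h)}\in L$, whence $h'\mid h$ by minimality; consequently $\zeta^{h/h'}$ is a primitive $h'$-th root of unity lying in $L$, so $L$ contains all $h'$-th roots of unity.

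First I would show that $X^{h'}-\alpha^{h'}$ is irreducible over $L$, using the criterion \cite[Chap.VI, \S9, Theorem 9.1, page 297]{langalgebra}. If $p$ is a prime dividing $h'$ and $\alpha^{h'}=\beta^p$ with $\beta\in L$, then $\eta:=\beta/\alpha^{h'/p}$ satisfies $\eta^p=1$, so $\eta$ is a $p$-th root of unity, hence (being an $h'$-th root of unity, as $p\mid h'$) lies in $L$; then $\alpha^{h'/p}=\beta\eta^{-1}\in L$ contradicts the minimality of $h'$, so $\alpha^{h'}\notin L^p$. If moreover $4\mid h'$, then $i\in L$ and $-4=(1+i)^4$, so $\alpha^{h'}\in -4L^4$ would force $\alpha^{h'}\in L^4\subseteq L^2$, contradicting the case $p=2$ just treated; hence $\alpha^{h'}\notin -4L^4$. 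By the cited criterion $X^{h'}-\alpha^{h'}$ is irreducible over $L$, so $[K:L]=h'$ and the conjugates of $\alpha$ over $L$ are $\alpha\omega^i$, $i=0,\dots,h'-1$, with $\omega$ a primitive $h'$-th root of unity.

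Next I would compute, for any integer $a$,
$$
Tr_{K/L}(\alpha^a)=\alpha^a\sum_{i=0}^{h'-1}\omega^{ai}=\begin{cases}h'\alpha^a&\text{if }h'\mid a,\\ 0&\text{otherwise.}\end{cases}
$$
On the other hand, the same gcd argument as above shows $\alpha^a\in L$ if and only if $h'\mid a$. Combining these two facts, and recalling $h'=[K:L]=[\Q(\alpha,\zeta):\Q(\alpha^h,\zeta)]$, we obtain $\alpha^a\notin\Q(\alpha^h,\zeta)\iff h'\nmid a\iff Tr_{K/L}(\alpha^a)=0$, which is the first assertion.

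For the final statement I would invoke the tower formulas $Tr_{K/\Q}=Tr_{L/\Q}\circ Tr_{K/L}$ and $Tr_{K/\Q}(\alpha^a)=[K:\Q(\alpha)]\,Tr_{\Q(\alpha)/\Q}(\alpha^a)$, valid since we are in characteristic $0$ and $\alpha^a\in\Q(\alpha)$: if $a$ is not divisible by $h'=[\Q(\alpha,\zeta):\Q(\alpha^h,\zeta)]$, then $Tr_{K/L}(\alpha^a)=0$ by the displayed formula, hence $Tr_{K/\Q}(\alpha^a)=0$, hence $Tr_{\Q(\alpha)/\Q}(\alpha^a)=0$. The only delicate point — the main obstacle — is the verification of the irreducibility criterion, in particular the clause concerning $4\mid h'$; this is exactly where having $\zeta$ in the base field substitutes for the ``unique real root'' argument of Lemma~\ref{lemzerotracereal}, and everything else is routine bookkeeping with towers and gcd's.
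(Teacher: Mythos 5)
Your proof is correct, but it follows a slightly different route than the paper. The paper deduces the structure of $K/L$ directly from Kummer theory, citing Lang, Algebra, Chap.~VI, \S6, Theorem~6.2: since $L=\Q(\alpha^h,\zeta)$ already contains the $h$-th roots of unity and $\alpha^h\in L$, that theorem immediately says $K/L$ is cyclic of some degree $h'\mid h$, that the conjugates of $\alpha$ over $L$ are $\alpha\xi^i$ with $\xi$ a primitive $h'$-th root of unity, and that $\alpha^{h'}\in L$ — with the minimality of $h'$ deduced afterwards by another appeal to the same theorem. You instead \emph{define} $h'$ as the minimal exponent with $\alpha^{h'}\in L$ and then apply the irreducibility criterion for $X^n-a$ (Lang, Chap.~VI, \S9, Theorem~9.1), which is exactly the tool the paper uses for the real variant Lemma~\ref{lemzerotracereal}. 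Your verification of the two hypotheses of that criterion is correct: for $p\mid h'$ you use that the $p$-th roots of unity lie in $L$ (since $\zeta^{h/h'}\in L$ is a primitive $h'$-th root of unity) to push a purported $p$-th root of $\alpha^{h'}$ down to a contradiction with minimality; and when $4\mid h'$, you observe $i\in L$ so $-4=(1+i)^4$ is already a fourth power, reducing $\alpha^{h'}\in -4L^4$ to the already-excluded $\alpha^{h'}\in L^2$. This is a legitimate alternative to Kummer theory; it is slightly more work (you must check the $p$-th-power and $-4L^4$ clauses), whereas the paper's Kummer-theoretic citation makes the cyclic structure and the divisibility $h'\mid h$ automatic. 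The remaining computation of $Tr_{K/L}(\alpha^a)$, the gcd argument showing $\alpha^a\in L\iff h'\mid a$, and the tower-formula deduction of the final statement agree with the paper.
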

\begin{proof}
Let $L=\Q(\alpha^h,\zeta)$ and $K=L(\alpha)$, since $\alpha$ is a root of the polynomial 
$$X^{h}-\alpha^h\in L[X]$$
the extension $K/L$ is cyclic of degree $h'\mid h$, the conjugates of $\alpha$ over $L$ are $\alpha\xi^i$, $i=0,\dots,h'-1$, where $\xi$ is a primitive $h'$-th root of unity, and $\alpha^{h'}\in L$, {\it see} \cite[Chap.VI, \S6, Theorem 6.2, page 289]{langalgebra}. Thus, for all $a\in\N^\times$ we have
\begin{equation}\label{forzerotrace}
Tr_{K/L}( \alpha^a) = \alpha^a\sum_{i=0}^{h'-1}\xi^{ai} = \begin{cases}h'\alpha^a &\mbox{if $h'\mid a$}\cr 0 &\mbox{otherwise}\end{cases}
\end{equation}
and $Tr_{K/L}(\alpha^a)=0$ if and only if $a\not\equiv0\ (h')$. We observe that $h'$ is the smallest positive integer such that $\alpha^{h'}\in L$, otherwise another application of {\it ibidem} would lead to a contradiction on the degree of the extension $K/L$. Now, since $\alpha^{h'}\in L$, the condition $\alpha^a\in L$ is equivalent to $\alpha^{(h',a)}\in L$ and by the minimality of $h'$ this happens if and only if $(h',a)\ge h'$, that is $h'\mid a$.

For the last statement, if $a$ is not divisible by $h'$ then
$$Tr_{K/\Q}(\alpha^{a})=Tr_{L/\Q}\left(Tr_{K/L}(\alpha^{a})\right)=0,$$
because $Tr_{K/L}( \alpha^a)=0$ by \eqref{forzerotrace}.
But, $Tr_{K/\Q(\alpha)}(\alpha^a)=[K:\Q(\alpha)]\alpha^a$ and we can further write
$$Tr_{\Q(\alpha)/\Q}(\alpha^{a})=[K:\Q(\alpha)]^{-1}Tr_{K/\Q}(\alpha^{a})=0
.$$
\end{proof}

We end with the proof  of Theorem~\ref{int3} which works
for general algebraic numbers.

\medskip
\noindent
{\bf Proof of Theorem \ref{int3}}.

\smallskip
Let $H$ be the splitting field of the minimal polynomial of $\alpha$ over $\Q$. So that $h$ is the order of the torsion group of $H^\times$ and $\zeta$ is a primitive $h$-th root of unity. Set $H_0=\Q(\zeta)$, $L=H_0(\alpha^h)$ and $K=L(\alpha)\subset H$. 

In one direction, assume $Tr_{K/H_0}(\alpha^n)=0$ for infinitely many $n\in\N$. Since $\alpha\in K$ we have $Tr_{H/K}(\alpha^n)=[H:K]\alpha^n$ and we deduce from the hypothesis that
$$
Tr_{H/H_0}(\alpha^{a+mh}) = Tr_{K/H_0}\left(Tr_{H/K}(\alpha^{a+mh})\right) = [H:K]Tr_{K/H_0}(\alpha^{a+mh}) = 0
$$
for infinitely many $m\in\N$ and some $0\le a<h$. Let $d=[L:H_0]$ and $\tau_1,\dots,\tau_d$ be all the embeddings of $L$ in $H$ over $H_0$. We express $Tr_{H/H_0}(\alpha^{a+mh})$ as the $m$-th term of a linear recurrence sequence:
$$
Tr_{H/H_0}( \alpha^{a+mh}) = Tr_{L/H_0}\left(Tr_{H/L}(\alpha^{a+mh})\right) = \sum_{i=1}^d\tau_i\left(Tr_{H/L}(\alpha^a)\right)\tau_i(\alpha^h)^m,
$$
which vanishes for infinitely many $m\in\N$. Observe that the ratios $\tau_i(\alpha^h)/\tau_j(\alpha^h)$, $i\not=j$, are not roots of unity, because being roots of unity and $h$ power in $H$ they would be $1$, but $\tau_i\not=\tau_j$ on $L$ and, since $\tau_i=\tau_j$ on $H_0$, we must have $\tau_i(\alpha^h)\not=\tau_j(\alpha^h)$. The Skolem-Mahler-Lech's theorem \cite[Corollary 7.2, page 193]{cetraro} implies that $Tr_{H/L}(\alpha^a)=0$ and then $Tr_{K/L}(\alpha^a)=0$ because
 $$
Tr_{H/L}(\alpha^a) = Tr_{K/L}\left(Tr_{H/K}(\alpha^a)\right) = [H:K]Tr_{K/L}(\alpha^a).
 $$
As before, Lemma~\ref{lemzerotrace}  ensures that $\alpha^a\notin\Q(\alpha^h,\zeta)$ and, in particular, $\alpha\notin\Q(\alpha^h,\zeta)$.

In the other direction, Lemma~\ref{lemzerotrace} also shows that if $\alpha^a\notin\Q(\alpha^h,\zeta)$, then $Tr_{K/L}(\alpha^{a})=0$ and
$$Tr_{K/H_0}(\alpha^{a+mh})=Tr_{L/H_0}\left(\alpha^{mh}Tr_{K/L}(\alpha^{a})\right)=0$$
for all $m\in\N$. This completes the proof.

\section{Periodicity and description of the set $H\cap\overline{\Q}^2$}
Let $\alpha$ be an algebraic integer, $P_\alpha\in\Z[X]$ its minimal monic polynomial over $\Z$ and set $k=\Q(\alpha)$. Recall from \cite[Chap.III, \S1, Cor. to Prop.2]{lang} that $\frac{1}{P'_\alpha(\alpha)}\Z[\alpha]$ is the complementary module $\Z[\alpha]'$ of $\Z[\alpha]$, that is the set of elements $y\in\Q(\alpha)$ such that $Tr_{k/\Q}(y\Z[\alpha])\subset\Z$. Indeed, without the assumption made in the above cited Corollary, its proof shows the desired equality.  Thus for $\lambda\in\frac{1}{P'_\alpha(\alpha)}\Z\left[\alpha,\frac{1}{\alpha}\right]$ and $n$ large enough we have $Tr_{k/\Q}(\lambda\alpha^n)\in\Z$.

The following lemma can also be found in \cite[Lemma 2]{dubickas} and \cite[Lemma 2]{zaimi}.
\begin{lem}\label{lemper}
Let $\alpha$ be an algebraic integer of degree $d$ over $\Q$, $\lambda\in\Z[\alpha]'$ and $b\in\N^\times$, then the sequence $(Tr_{k/\Q}(\lambda\alpha^n) \mod b)_{n\in\N}$ is ultimately periodic and the period has length $\leq b^d$.
\end{lem}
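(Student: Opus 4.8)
The plan is to realise $t_n := Tr_{k/\Q}(\lambda\alpha^n)$ as an integer-valued solution of an integer linear recurrence of order $d$, and then apply the pigeonhole principle to its ``state vectors'' reduced modulo $b$.

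First I would record that $t_n\in\Z$ for every $n\in\N$: since $\lambda\in\Z[\alpha]'$ and $\alpha^n\in\Z[\alpha]$, the defining property of the complementary module gives $Tr_{k/\Q}(\lambda\alpha^n)\in\Z$. Next, write the minimal monic polynomial as $P_\alpha(X)=X^d+c_{d-1}X^{d-1}+\dots+c_0$ with all $c_i\in\Z$ (monic because $\alpha$ is an algebraic integer). From $\alpha^{n+d}=-(c_{d-1}\alpha^{n+d-1}+\dots+c_0\alpha^n)$, multiplying by $\lambda$ and applying $Tr_{k/\Q}$ yields
$$t_{n+d}+c_{d-1}t_{n+d-1}+\dots+c_0t_n=0\qquad(n\in\N),$$
so $(t_n)$ is an integer linear recurrence sequence of order $d$ whose companion matrix $A$, with entries in $\Z$, advances the state vector $v_n:=(t_n,t_{n+1},\dots,t_{n+d-1})$ by $v_{n+1}=Av_n$.

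Reducing modulo $b$, the reduced vectors $\bar v_n\in(\Z/b\Z)^d$ still satisfy $\bar v_{n+1}=\bar A\,\bar v_n$; in particular $\bar v_{n+1}$ is a function of $\bar v_n$. Since $(\Z/b\Z)^d$ has exactly $b^d$ elements, among the $b^d+1$ vectors $\bar v_0,\bar v_1,\dots,\bar v_{b^d}$ two must coincide, say $\bar v_i=\bar v_j$ with $0\le i<j\le b^d$. Because each step is deterministic going forward, $\bar v_{n}=\bar v_{n+(j-i)}$ for all $n\ge i$, hence $(\bar v_n)_{n\in\N}$ --- and therefore its first coordinate $(t_n\bmod b)_{n\in\N}$ --- is ultimately periodic with period length $j-i\le b^d$.

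There is essentially no hard step here: the only point requiring a little care is the verification that $(t_n)$ is genuinely $\Z$-valued and satisfies an order-$d$ integer recurrence, after which the bound $b^d$ falls out of counting state vectors in $(\Z/b\Z)^d$. (If one wanted pure rather than merely ultimate periodicity one would additionally need $\bar A$ invertible modulo $b$, i.e.\ $\gcd(c_0,b)=1$; but the statement only claims ultimate periodicity, so this is not needed.)
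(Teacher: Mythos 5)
Your proof is correct and takes essentially the same approach as the paper's: both derive the order-$d$ integer recurrence from $Tr_{k/\Q}(P_\alpha(\alpha)\lambda\alpha^n)=0$ and then apply the pigeonhole principle to blocks of $d$ consecutive residues in $(\Z/b\Z)^d$. Your parenthetical remark about pure periodicity when $\bar A$ is invertible mod $b$ (equivalently when $\alpha$ is a unit and $\gcd(c_0,b)=1$) corresponds to the paper's Remark following the lemma.
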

\begin{proof}
Let $t_n\in\Z/b\Z$ be the class of $Tr_{k/\Q}(\lambda\alpha^n)$ modulo $b$ and write  
$$P_\alpha(X)=X^d+A_{d-1}X^{d-1}+\dots+A_0\in\Z[X]$$
the minimal monic polynomial of $\alpha$ over $\Z$. Let $\overline{A_i}$  denote the class of $A_i$ modulo $b$. From $Tr_{k/\Q}(P_\alpha(\alpha)\lambda\alpha^n)=0$ we deduce
\begin{equation}\label{forinlemper}
t_{n+d}=-\overline{A_{d-1}}t_{n+d-1}-\dots-\overline{A_0}t_n
\end{equation}
for all $n\in\N$. Since there are finitely many $d$-tuples of elements of $\Z/b\Z$, at least one must appear twice as blocks in the sequence $(t_n)_{n\in\N}$. Thus there exists natural numbers $m<n$ such that $t_m=t_n$, \dots, $t_{m+d-1}=t_{n+d-1}$.
It follows inductively from \eqref{forinlemper} that $$t_{m+d}=t_{n+d}, t_{m+d+1}=t_{n+d+1}, \dots\ $$ and hence the ultimate periodicity of the sequence. The length of the period divides $n-m$ and since there are at most $b^d$ distinct blocks of $d$ elements in $\Z/b\Z$, we have  that the period has length  at most $b^d$.
\end{proof}

\begin{rmk}\label{rmksalem}
If the number $\alpha$ is assumed to be a unit, then equation \eqref{forinlemper} enables one to deduce that the sequence $(t_n)_{n\in\N}$ is purely periodic and in particular, if $p$ denotes the length of the period, that $t_{pn}$ is the class of $Tr_{k/\Q}(\lambda)$ modulo $b$ for all $n\in\N$, see \cite[Lemma 2]{zaimi}.
\end{rmk}

\begin{prop}\label{proper}
Let $\alpha$ be a PV number of degree $d$ over $\Q$, $\lambda_0\in\Z[\alpha]'$ and $b\in\N^\times$. There exists an integer $p\leq b^d$ and\footnote{Here and after $\lceil*\rceil$ stands for the least integer larger or equal to $*$ and $\lfloor*\rfloor$ is the largest integer smaller or equal to $*$ (the integer part).} $i_1,\dots,i_p\in\{-\lceil b/2\rceil+1,\dots,\lfloor b/2\rfloor\}$ such that $Tr_{k/\Q}(\lambda_0\alpha^n)\equiv i_\ell(b)$ and $\left|\left\Vert\frac{\lambda_0\alpha^n}{b}\right\Vert-\frac{|i_{\ell}|}{b}\right|<c^n$ with $\ell=n-p\lfloor n/p\rfloor+1$, $n$ large enough and some $0<c<1$.
\end{prop}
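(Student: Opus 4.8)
The plan is to combine the periodicity modulo $b$ of the trace sequence, supplied by Lemma~\ref{lemper}, with the defining feature of PV numbers: $\lambda_0\alpha^n$ differs from its trace $Tr_{k/\Q}(\lambda_0\alpha^n)$ by an exponentially small error.

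First I would note that $\lambda_0\in\Z[\alpha]'$ together with $\alpha^n\in\Z[\alpha]$ for all $n\in\N$ (valid since the PV number $\alpha$ is an algebraic integer) forces $T_n:=Tr_{k/\Q}(\lambda_0\alpha^n)\in\Z$ for every $n$. By Lemma~\ref{lemper} the sequence $(T_n\bmod b)_{n\in\N}$ is ultimately periodic with a period $p\le b^d$; fix such a $p$ and an integer $n_0$ past which this periodicity holds. For $\ell=1,\dots,p$ let $i_\ell$ be the representative of the residue class of $T_n$ modulo $b$, for any $n\ge n_0$ with $n\equiv\ell-1\ (p)$, lying in the symmetric set $\{-\lceil b/2\rceil+1,\dots,\lfloor b/2\rfloor\}$; this is well defined by the choice of $p$ and $n_0$, and it yields $Tr_{k/\Q}(\lambda_0\alpha^n)\equiv i_\ell\ (b)$ with $\ell=n-p\lfloor n/p\rfloor+1$ whenever $n\ge n_0$. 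Moreover every element of that set has absolute value $\le b/2$, so $T_n/b$ differs from $i_\ell/b$ by an integer and $|i_\ell|/b\le\frac12$, giving $\Vert T_n/b\Vert=\Vert i_\ell/b\Vert=|i_\ell|/b$.

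Now I would invoke the PV hypothesis (assuming $d\ge2$; if $d=1$ then $\lambda_0\alpha^n=T_n$ and what follows is trivial). Let $\sigma_1=\mathrm{id},\sigma_2,\dots,\sigma_d$ be the embeddings of $k=\Q(\alpha)$ into $\C$, so that $\sigma_1(\alpha)=\alpha$ and $\rho:=\max_{2\le j\le d}|\sigma_j(\alpha)|<1$, and set $C:=\sum_{j=2}^d|\sigma_j(\lambda_0)|$. From
$$T_n=\sum_{j=1}^d\sigma_j(\lambda_0)\,\sigma_j(\alpha)^n=\lambda_0\alpha^n+\sum_{j=2}^d\sigma_j(\lambda_0)\,\sigma_j(\alpha)^n$$
one gets $|\lambda_0\alpha^n-T_n|\le C\rho^n$, hence $|\lambda_0\alpha^n/b-T_n/b|\le C\rho^n/b$. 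Since $x\mapsto\Vert x\Vert$ is $1$-Lipschitz,
$$\left|\left\Vert\frac{\lambda_0\alpha^n}{b}\right\Vert-\frac{|i_\ell|}{b}\right|=\left|\,\left\Vert\frac{\lambda_0\alpha^n}{b}\right\Vert-\left\Vert\frac{T_n}{b}\right\Vert\,\right|\le\frac{C\rho^n}{b}.$$
Choosing any $c$ with $\rho<c<1$, we have $C\rho^n/b<c^n$ for all large $n$; together with $n\ge n_0$ this proves the proposition.

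I do not expect a genuine obstacle. The two points needing care are, first, that Lemma~\ref{lemper} gives only ultimate — not necessarily pure — periodicity of $(T_n\bmod b)$ (a PV number need not be a unit, so Remark~\ref{rmksalem} does not apply), which is exactly why the statement is confined to $n$ large enough; and second, the elementary but necessary check that the symmetric residue $i_\ell$ satisfies $\Vert i_\ell/b\Vert=|i_\ell|/b$, for which one uses $|i_\ell|\le b/2$.
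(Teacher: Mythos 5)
Your proof is correct and follows essentially the same route as the paper: invoke Lemma~\ref{lemper} for the ultimate periodicity of $(Tr_{k/\Q}(\lambda_0\alpha^n)\bmod b)_n$, pick the symmetric representatives $i_\ell$, and then use the PV property of $\alpha$ to see that $\lambda_0\alpha^n$ differs from its (integer) trace by an exponentially small error. The only cosmetic difference is that the paper identifies $\frac{T_n-i_\ell}{b}$ as the nearest integer to $\frac{\lambda_0\alpha^n}{b}$ and reads off $\Vert\lambda_0\alpha^n/b\Vert$ directly, while you sidestep that identification by applying the $1$-Lipschitz property of $\Vert\cdot\Vert$ to $\frac{\lambda_0\alpha^n}{b}$ and $\frac{T_n}{b}$, which handles the borderline case $|i_\ell|=b/2$ a shade more cleanly; your remarks about ultimate versus pure periodicity and the check $\Vert i_\ell/b\Vert=|i_\ell|/b$ are exactly the right cautions.
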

\begin{proof}
By Lemma \ref{lemper}, the sequence of classes $t_n\in\Z/b\Z$ modulo $b$ of $Tr_{k/\Q}(\lambda_0\alpha^n)$ is ultimately periodic, with period of length say $p$. We represent the elements of $\Z/b\Z$ by the integers $-\lceil b/2\rceil+1,\dots,\lfloor b/2\rfloor$ so that the period of the sequence $(t_n)_{n\in\N}$ gives integers $i_1,\dots,i_p$ lying in
$\{-\lceil b/2\rceil+1,\dots,\lfloor b/2\rfloor\}$ satisfying $Tr_{k/\Q}(\lambda_0\alpha^n)\equiv i_\ell(b)$ for $\ell=n-p\lfloor n/p\rfloor+1$ and $n$ large. We then observe
$$\left|\frac{\lambda_0\alpha^n}{b} - \frac{Tr_{k/\Q}(\lambda_0\alpha^n)-i_\ell}{b} - \frac{i_\ell}{b}\right| = \left|\frac{\lambda_0\alpha^n}{b} - Tr_{k/\Q}\left(\frac{\lambda_0\alpha^n}{b}\right)\right| < c^n$$
for some $0<c<1$, because $\alpha$ is a PV number. It follows that $\frac{Tr_{k/\Q}(\lambda_0\alpha^n)-i_\ell}{b}$ is the integer closest to $\frac{\lambda_0\alpha^n}{b}$ and $\left\Vert\frac{\lambda_0\alpha^n}{b}\right\Vert=\frac{|i_\ell|}{b}+O(c^n)$.
\end{proof}

\begin{rmk}
Observe that $i_1$ is the residue modulo $b$ of $Tr_{k/\Q}(\lambda_0\alpha^n)$ for all large $n$ divisible by $p$. For any $j\in\Z$, replacing $\lambda_0$ by $\lambda_0\alpha^j$
simply shifts the sequence $(Tr_{k/\Q}(\lambda_0\alpha^n))_{n\in\N}$ by $|j|$ steps to the left or right according to the sign of $j$. Thus, with the PV number $\alpha$ and the integer $b\in\N^\times$ fixed, Proposition \ref{proper} associates to each $\lambda_0\in\frac{1}{P'_\alpha(\alpha)}\Z\left[\alpha,\frac{1}{\alpha}\right]$, the integer $p$ and the vector $(i_1,\dots,i_p)\in(\Z/b\Z)^p$. This map is a homomorphism of $\Z\left[\alpha,\frac{1}{\alpha}\right]$-modules if we define the action of $\alpha$ on $(\Z/b\Z)^p$ as the cyclic permutation $\alpha\cdot(i_1,\dots,i_p)=(i_2,\dots,i_p,i_1)$. In particular, the integer $p$ can be chosen independent of $\lambda_0$, although for some $\lambda_0$ a shorter period may exist.
\end{rmk}

More generally for  any $\lambda\in\Q(\alpha)$, we choose  $b$ to be the least positive integer such that $\lambda_0=b\lambda\in\frac{1}{P'_\alpha(\alpha)}\Z\left[\alpha,\frac{1}{\alpha}\right]$ and consider the integer $p$ and vector $(i_1,\dots,i_p)\in(\Z/b\Z)^p$ associated to $b\lambda$. It follows from Proposition \ref{proper} that the fractions $\frac{|i_1|}{b},\dots,\frac{|i_p|}{b}$ (lying in $[0,\frac{1}{2}]$) are the limit points of the sequence $(\Vert\lambda\alpha^n\Vert)_{n\in\N}$.

\begin{cor}\label{corperM}
Let $\alpha$ be a PV number and $\lambda\in\Q(\alpha)$. Then, $0$ is a limit point of the sequence $\left(\left\Vert\lambda\alpha^n\right\Vert\right)_{n\in\N}$ if and only if there exists an integer $p\in\N^\times$ and $\ell\in\{1,\dots,p\}$ such that $Tr_{k/\Q}(\lambda\alpha^{np+\ell-1})\in\Z$  for $n\in\N$ large enough or, equivalently, $\lambda$ belongs to $\frac{1}{\alpha^{\ell-1} P'_{\alpha^p}(\alpha^p)}\Z\left[\alpha^p,\frac{1}{\alpha^p}\right]$.

And $0$ is the unique limit point of the sequence  $\left(\left\Vert\lambda\alpha^n\right\Vert\right)_{n\in\N}$ if and only if $\lambda\in\frac{1}{P'_{\alpha}(\alpha)}\Z\left[\alpha,\frac{1}{\alpha}\right]$.
\end{cor}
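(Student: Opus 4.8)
The plan is to deduce both equivalences from the remark immediately preceding the corollary, which already identifies the set of limit points of $(\Vert\lambda\alpha^n\Vert)_{n\in\N}$ with $\{|i_1|/b,\dots,|i_p|/b\}$; here $b\in\N^\times$ is minimal with $\lambda_0:=b\lambda\in\frac{1}{P'_\alpha(\alpha)}\Z[\alpha,\frac1\alpha]$ and, by Proposition~\ref{proper} (and the ensuing shift argument), $i_\ell$ is the residue modulo $b$ of $Tr_{k/\Q}(\lambda_0\alpha^m)$ for $m$ large with $m\equiv\ell-1\ (p)$, where $k=\Q(\alpha)$. Two elementary facts will be used repeatedly. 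First, since $Tr_{k/\Q}(\lambda_0\alpha^m)=b\,Tr_{k/\Q}(\lambda\alpha^m)$ is in $\Z$ for $m$ large, the congruence $Tr_{k/\Q}(\lambda_0\alpha^m)\equiv0\ (b)$ holds precisely when $Tr_{k/\Q}(\lambda\alpha^m)\in\Z$. Second, because $\alpha$ is a PV number, $\Q(\alpha^p)=\Q(\alpha)=k$ for every $p\in\N^\times$: a conjugate $\zeta\alpha\ne\alpha$ of $\alpha$ over $\Q(\alpha^p)$ (a root of $X^p-\alpha^p$) would be a $\Q$-conjugate of $\alpha$ of absolute value $|\alpha|>1$, contradicting the PV property; moreover $\alpha^p$ is then itself a PV number, hence an algebraic integer with monic minimal polynomial $P_{\alpha^p}\in\Z[X]$.

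For the first equivalence, ``$0$ is a limit point of $(\Vert\lambda\alpha^n\Vert)_n$'' is, by the quoted remark, the same as ``$i_\ell=0$ for some $\ell\in\{1,\dots,p\}$'', which by the first fact above is the same as ``$Tr_{k/\Q}(\lambda\alpha^{np+\ell-1})\in\Z$ for $n$ large, for some $\ell$''. It remains to rewrite this last condition, for a fixed pair $(p,\ell)$, as membership of $\lambda$ in $\frac{1}{\alpha^{\ell-1}P'_{\alpha^p}(\alpha^p)}\Z[\alpha^p,\frac1{\alpha^p}]$. Setting $\mu=\lambda\alpha^{\ell-1}\in\Q(\alpha)=\Q(\alpha^p)$, we have $Tr_{k/\Q}(\lambda\alpha^{np+\ell-1})=Tr_{\Q(\alpha^p)/\Q}(\mu(\alpha^p)^n)$. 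If $\mu\in\frac{1}{P'_{\alpha^p}(\alpha^p)}\Z[\alpha^p,\frac1{\alpha^p}]$, these traces are integers for $n$ large, exactly as recalled at the start of this section for $\alpha$, applied now to the algebraic integer $\alpha^p$. Conversely, if $Tr_{\Q(\alpha^p)/\Q}(\mu(\alpha^p)^n)\in\Z$ for all large $n$, then $Tr_{\Q(\alpha^p)/\Q}\left(\mu(\alpha^p)^{n_0}\cdot(\alpha^p)^j\right)\in\Z$ for a fixed $n_0$ and all $j\ge0$; since $1,\alpha^p,(\alpha^p)^2,\dots$ span $\Z[\alpha^p]$ over $\Z$, this means $\mu(\alpha^p)^{n_0}$ lies in the complementary module $\Z[\alpha^p]'=\frac{1}{P'_{\alpha^p}(\alpha^p)}\Z[\alpha^p]$, hence $\mu\in\frac{1}{P'_{\alpha^p}(\alpha^p)}\Z[\alpha^p,\frac1{\alpha^p}]$, i.e. $\lambda\in\frac{1}{\alpha^{\ell-1}P'_{\alpha^p}(\alpha^p)}\Z[\alpha^p,\frac1{\alpha^p}]$.

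For the second equivalence, ``$0$ is the unique limit point'' means, by the remark, that $i_\ell=0$ for \emph{all} $\ell\in\{1,\dots,p\}$, i.e. $Tr_{k/\Q}(\lambda\alpha^m)\in\Z$ for all $m$ large; by the argument of the previous paragraph applied to $\alpha$ itself (the case $p=1$, $\ell=1$) this is equivalent to $\lambda\in\frac{1}{P'_\alpha(\alpha)}\Z[\alpha,\frac1\alpha]$. Conversely, if $\lambda$ lies in this module then $Tr_{k/\Q}(\lambda\alpha^n)\in\Z$ for $n$ large while $\lambda\alpha^n-Tr_{k/\Q}(\lambda\alpha^n)=-\sum_{\sigma\ne{\rm id}}\sigma(\lambda)\sigma(\alpha)^n\to0$ because $\alpha$ is PV, so $\Vert\lambda\alpha^n\Vert\to0$ and $0$ is the unique limit point. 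I expect the only genuinely delicate point to be the bookkeeping between $\Z$-valued trace sequences and membership in the complementary module — in both directions, for $\alpha^p$ as well as $\alpha$, and clearing denominators by powers of the relevant generator; the identity $\Q(\alpha^p)=\Q(\alpha)$, which holds because $\alpha$ is PV, is what prevents any discrepancy between $Tr_{k/\Q}$ and $Tr_{\Q(\alpha^p)/\Q}$ and makes the ``equivalently'' clause valid verbatim for each pair $(p,\ell)$.
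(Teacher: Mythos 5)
Your proof is correct and follows essentially the same route as the paper's, reducing everything to Proposition~\ref{proper} (via the remark immediately after it) and the complementary-module characterisation of integrality of traces. Two things you do more carefully than the paper: you state and prove the identity $\Q(\alpha^p)=\Q(\alpha)$ for a PV number $\alpha$, which is what licenses identifying $Tr_{k/\Q}$ with $Tr_{\Q(\alpha^p)/\Q}$ and makes $P_{\alpha^p}\in\Z[X]$ well defined; and you spell out in both directions the passage from ``$Tr_{k/\Q}(\lambda\alpha^{np+\ell-1})\in\Z$ for large $n$'' to ``$\lambda\alpha^{\ell-1}\alpha^{pn_0}\in\Z[\alpha^p]'$'', which the paper leaves implicit. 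One small point to tidy up: your chain of equivalences for the first assertion ties $p$ to the specific period length produced by Proposition~\ref{proper}, whereas the corollary quantifies over arbitrary $p\in\N^\times$. The implication ``$0$ is a limit point $\Rightarrow$ such $p,\ell$ exist'' is thus established, but for the converse with an arbitrary $p$ you should add (as the paper does, and as you in fact do in your last paragraph in the special case $p=1$, $\ell=1$) the short direct argument: if $Tr_{k/\Q}(\lambda\alpha^{np+\ell-1})\in\Z$ for $n$ large, then since $\alpha^p$ is a PV number the difference $|\lambda\alpha^{np+\ell-1}-Tr_{k/\Q}(\lambda\alpha^{np+\ell-1})|<c^n$ tends to $0$, so $\Vert\lambda\alpha^{np+\ell-1}\Vert\to0$ and $0$ is indeed a limit point.
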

\begin{proof}
Assume $0$ is a limit point of the sequence $\left(\left\Vert\lambda\alpha^n\right\Vert\right)_{n\in\N}$. Write $\lambda=\frac{1}{b}f(\alpha)$ with $b\in\N^\times$ and $f\in\Z[X]$. Set $\lambda_0=b\lambda=f(\alpha)$. Since $0$ is a limit point of the sequence $\left(\left\Vert\frac{f(\alpha)\alpha^n}{b}\right\Vert\right)_{n\in\N}$, it follows that in Proposition \ref{proper} some $i_\ell$ must be $0$ and the numbers $Tr_{k/\Q}(f(\alpha)\alpha^{np+\ell-1})$ are divisible by $b$ or, equivalently, $Tr_{k/\Q}(\lambda\alpha^{np+\ell-1})\in\Z$, for $n$ large enough. 
Thus $\lambda\alpha^{n_0p+\ell-1}\in\frac{1}{P'_{\alpha^p}(\alpha^p)}\Z[\alpha^p]$ for some $n_0\in\N$.

Conversely assume $Tr_{k/\Q}(\lambda\alpha^{np+\ell-1})\in\Z$. Since $\alpha$ is a PV number and $n$ is large enough, $\lambda\alpha^{np+\ell-1}$ is a large real number whereas the sum of its conjugates is of absolute values $<c^n$ for some $0<c<1$. The difference $\Vert\lambda\alpha^{np+\ell-1}\Vert=|\lambda\alpha^{np+\ell-1}-Tr_{k/\Q}(\lambda\alpha^{np+\ell-1})|<c^n$ tends to $0$ as $n$ goes to $\infty$ and thus $0$ is a limit point of the sequence $\left(\left\Vert\lambda\alpha^n\right\Vert\right)_{n\in\N}$.

If $0$ is the unique limit point of the sequence  $\left(\left\Vert\lambda\alpha^n\right\Vert\right)_{n\in\N}$, we must have $p=1$ and $i_1=0$ in Proposition \ref{proper}. Thus $\ell=1$ and $\lambda\alpha^{n_0}$ belongs to the complementary module of $\Z[\alpha]$, for some $n_0$. Conversely if $\lambda\in\frac{1}{P'_{\alpha}(\alpha)}\Z\left[\alpha,\frac{1}{\alpha}\right]$ (i.e. $p=\ell=1$), then $Tr_{k/\Q}(\lambda\alpha^n)\in\Z$ for all $n$ large and the above argument shows that it is the closest integer to $\lambda\alpha^n$, since $\alpha$ is a PV number. Then, $\Vert\lambda\alpha^n\Vert = |\lambda\alpha^{n}-Tr_{k/\Q}(\lambda\alpha^{n})|<c^n$ tends to $0$ as $n$ goes to $\infty$.
\end{proof}

\begin{cor}\label{corper}
Let $(\lambda,\alpha)\in H\cap(\R^\times\times\overline{\Q})$, then $\alpha$ is a PV number, $\lambda\in\Q(\alpha)$ and $Tr_{k/\Q}(\lambda\alpha^n)\in\Z$ for $n$ large enough.
\end{cor}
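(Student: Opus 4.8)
The plan is to read this off as a corollary of Hardy's theorem (Theorem~\ref{hardy}) together with the periodicity analysis of Corollary~\ref{corperM}, with essentially no new work. First note that membership $(\lambda,\alpha)\in H$ forces $\alpha$ to be a real algebraic number with $\alpha>1$ and $\lambda$ a nonzero real with $\Vert\lambda\alpha^n\Vert\to0$. Theorem~\ref{hardy} then applies verbatim and yields that $\alpha$ is a PV number and that $\lambda\in\Q(\alpha)=k$ (so in particular $\lambda$ is algebraic, and $\Q(\lambda,\alpha)=k$). This settles the first two assertions.

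For the third assertion, observe that $\Vert\lambda\alpha^n\Vert\to0$ says precisely that $0$ is the \emph{unique} limit point of the sequence $\bigl(\Vert\lambda\alpha^n\Vert\bigr)_{n\in\N}$. Since we now know $\alpha$ is a PV number and $\lambda\in\Q(\alpha)$, the standing hypotheses of Corollary~\ref{corperM} are met, and its second part gives $\lambda\in\frac{1}{P'_\alpha(\alpha)}\Z\left[\alpha,\frac{1}{\alpha}\right]$. Finally, by the description of the complementary module $\Z[\alpha]'=\frac{1}{P'_\alpha(\alpha)}\Z[\alpha]$ recalled at the beginning of this section, for $\lambda$ in $\frac{1}{P'_\alpha(\alpha)}\Z\left[\alpha,\frac{1}{\alpha}\right]$ and $n$ large enough one has $\lambda\alpha^n\in\Z[\alpha]'$, hence $Tr_{k/\Q}(\lambda\alpha^n)\in\Z$ for all large $n$, as claimed.

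I do not expect a genuine obstacle here: the substantive content sits entirely in Theorem~\ref{hardy} and in the periodicity machinery (Lemma~\ref{lemper}, Proposition~\ref{proper}, Corollary~\ref{corperM}) already established. The only point requiring a moment's care is the passage from "$0$ is a limit point" — which by Corollary~\ref{corperM} only produces some $p\ge1$ and $\ell\in\{1,\dots,p\}$ with $Tr_{k/\Q}(\lambda\alpha^{np+\ell-1})\in\Z$ along an arithmetic progression of exponents — to the full statement "$Tr_{k/\Q}(\lambda\alpha^n)\in\Z$ for all large $n$"; this is exactly what the \emph{uniqueness} of the limit point $0$ buys us, forcing $p=\ell=1$ in the periodicity statement of Proposition~\ref{proper}. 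One could also phrase the argument that way directly, bypassing the explicit module description, but invoking the second half of Corollary~\ref{corperM} is the cleanest route.
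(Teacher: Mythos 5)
Your proof is correct and follows essentially the same route as the paper: invoke Theorem~\ref{hardy} to get that $\alpha$ is a PV number and $\lambda\in\Q(\alpha)$, then use the periodicity machinery to pass from $\Vert\lambda\alpha^n\Vert\to0$ to $Tr_{k/\Q}(\lambda\alpha^n)\in\Z$ for large $n$. The only cosmetic difference is that you funnel the second step through the already-packaged Corollary~\ref{corperM}, whereas the paper writes $\lambda=\frac{1}{b}f(\alpha)$ and cites Proposition~\ref{proper} directly to argue that all the residues $i_\ell$ must vanish; the two are mathematically equivalent since Corollary~\ref{corperM} is itself derived from Proposition~\ref{proper}.

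One small caution on a parenthetical you included: uniqueness of the limit point $0$ forces all $i_\ell$ to vanish in Proposition~\ref{proper}, but does not by itself force the period length $p$ to be $1$ (one may of course then \emph{choose} $p=1$). This does not affect the logic you actually use — $i_\ell\equiv0\ (b)$ for all $\ell$ already gives $Tr_{k/\Q}(\lambda\alpha^n)\in\Z$ for all large $n$, hence $\lambda\alpha^n\in\Z[\alpha]'$ and $\lambda\in\frac{1}{P'_\alpha(\alpha)}\Z\bigl[\alpha,\frac{1}{\alpha}\bigr]$ — so your invocation of Corollary~\ref{corperM} is sound.
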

\begin{proof}
For $(\lambda,\alpha)\in H\cap(\R^\times\times\overline{\Q})$, we know by Theorem \ref{hardy} that $\alpha$ is a PV number and $\lambda\in\Q(\alpha)$. We may write $\lambda=\frac{1}{b}f(\alpha)$ for some rational integer $b$ and $f\in\Z[X]$. By Proposition \ref{proper}, the limit points of the sequence $\left(\left\Vert\frac{f(\alpha)\alpha^n}{b}\right\Vert\right)_{n\in\N}$ are those rational numbers among $0,1/b,\dots,\lfloor b/2\rfloor/b$ for which the numerator is congruent to infinitely many $Tr_{k/\Q}(f(\alpha)\alpha^n)$ modulo $b$. But, $(\lambda,\alpha)$ being in $H$, the sequence converges to $0$. Thus, the only possible fraction is $0$ and all the numbers $Tr_{k/\Q}(f(\alpha)\alpha^n)$ must be divisible by $b$ and so $Tr_{k/\Q}(\lambda\alpha^n)\in\Z$, for $n$ large enough.
\end{proof}

We need to introduce the following notation for the purpose of the next proposition. For any set $S\subset\R$ we define $\Vert S\Vert=\{\Vert\gamma\Vert;\gamma\in S\}\subset[0,1/2]$.

\begin{prop}\label{propersalem}
Let $\alpha$ be a Salem number of degree $d$ over $\Q$, $\lambda\in\Z[\alpha]'$ and $b\in\N^\times$. There exists an integer $p\leq b^d$ and $i_1,\dots,i_p\in\{-\lceil b/2\rceil+1,\dots,\lfloor b/2\rfloor\}$ such that $Tr_{k/\Q}(\lambda\alpha^n)\equiv i_\ell(b)$ for $\ell=n-p\lfloor n/p\rfloor+1$ and any $n\in\N^\times$. Furthermore, let $\alpha=\alpha_1,\alpha^{-1}=\alpha_2,\alpha_3,\dots,\alpha_{d}$ be the conjugates of $\alpha$ and $\lambda_{\ell,1},\dots,\lambda_{\ell,d}$ be those of $\lambda\alpha^{\ell-1}$ for $\ell\in\{1,\dots,p\}$. Then the set of limit points of each subsequence $\left(\left\Vert\frac{\lambda}{b}\alpha^{mp+\ell-1}\right\Vert\right)_{m\in\N}$ is $\Vert\mathcal{L}_\ell\Vert$, where
$$\mathcal{L}_\ell=\left[\frac{i_\ell}{b}-\sum_{j=3}^d\frac{|\lambda_{\ell,j}|}{b}, \frac{i_\ell}{b}+\sum_{j=3}^d\frac{|\lambda_{\ell,j}|}{b}\right].$$

Furthermore, an integer closest to $\lambda\alpha^n$ is congruent to $i\in\{-\lceil b/2\rceil+1,\dots,\lfloor b/2\rfloor\}$ modulo $b$ for infinitely many $n$ if and only if  we have $\left\Vert\frac{\lambda\alpha^n-i}{b}\right\Vert\in \left[0,\frac{1}{2b}\right]=\left\Vert\left[-\frac{1}{2b},\frac{1}{2b}\right]\right\Vert$ or equivalently $\left\Vert\frac{\lambda\alpha^n}{b}\right\Vert\in\left\Vert\left[\frac{2i-1}{2b},\frac{2i+1}{2b}\right]\right\Vert$, for infinitely many $n$. In this case $\left(\cup_{\ell=1}^{p}\mathcal{L}_\ell\right)\cap\left\Vert\left[\frac{2i-1}{2b},\frac{2i+1}{2b}\right]\right\Vert\not=\emptyset$.
\end{prop}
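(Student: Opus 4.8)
\emph{Pure periodicity.} A Salem number is an algebraic unit, its minimal polynomial $P_\alpha$ being reciprocal and hence of constant term $\pm1$; so the recurrence \eqref{forinlemper} for the residues $t_n=Tr_{k/\Q}(\lambda\alpha^n)\bmod b$ can be run backwards, and Lemma~\ref{lemper} together with Remark~\ref{rmksalem} gives that $(t_n)_{n\in\N}$ is purely periodic with a period $p\le b^d$. Writing the residues as integers of $\{-\lceil b/2\rceil+1,\dots,\lfloor b/2\rfloor\}$ produces $i_1,\dots,i_p$ with $Tr_{k/\Q}(\lambda\alpha^n)\equiv i_\ell\,(b)$, $\ell=n-p\lfloor n/p\rfloor+1$, for every $n$.

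\emph{The limit-point sets.} Fix $\ell$ and put $\mu=\lambda\alpha^{\ell-1}$, which still lies in $\Z[\alpha]'$ (a $\Z[\alpha]$-module), with conjugates $\mu_j=\lambda_{\ell,j}$ attached to $\alpha_j$ through a common embedding of $k=\Q(\alpha)$. For $n=mp+\ell-1$ we have $\mu\alpha^{mp}=\lambda\alpha^n$, and splitting the dominant term $\mu\alpha^{mp}$ and the term $\mu_2\alpha^{-mp}\to0$ off the integer $Tr_{k/\Q}(\mu\alpha^{mp})=\mu\alpha^{mp}+\mu_2\alpha^{-mp}+\sum_{j=3}^d\mu_j\alpha_j^{mp}$, which is $\equiv i_\ell\,(b)$, gives
\[
\left\Vert\frac{\lambda}{b}\alpha^{n}\right\Vert=\left\Vert\frac{i_\ell}{b}-\frac{\mu_2\alpha^{-mp}}{b}-\frac1b\sum_{j=3}^d\mu_j\alpha_j^{mp}\right\Vert .
\]
Since $\mu_2\alpha^{-mp}\to0$, the set of limit points of the left side is $\bigl\Vert\frac{i_\ell}{b}-\frac1b\mathcal{S}_\ell\bigr\Vert$, where $\mathcal{S}_\ell$ is the set of limit points of $\bigl(\sum_{j=3}^d\mu_j\alpha_j^{mp}\bigr)_{m\in\N}$; so it is enough to prove $\mathcal{S}_\ell=[-\sigma_\ell,\sigma_\ell]$ with $\sigma_\ell=\sum_{j=3}^d|\mu_j|$, because then $\frac{i_\ell}{b}-\frac1b\mathcal{S}_\ell=\mathcal{L}_\ell$. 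Here $\mathcal{S}_\ell\subseteq[-\sigma_\ell,\sigma_\ell]$ is immediate, as $\sum_{j\ge3}\mu_j\alpha_j^{mp}=Tr_{k/\Q}(\mu\alpha^{mp})-\mu\alpha^{mp}-\mu_2\alpha^{-mp}$ is real of modulus $\le\sigma_\ell$. For the reverse inclusion I would use that $P_\alpha$ has real coefficients, so the unimodular conjugates come in complex-conjugate pairs $\{\alpha_j,\overline{\alpha_j}\}=\{\alpha_j,\alpha_j^{-1}\}$, with $\overline{\mu_j}$ attached to $\overline{\alpha_j}$; picking one index $q$ per pair and writing $\alpha_q={\rm e}^{2\pi i\theta_q}$, $\mu_q=|\mu_q|{\rm e}^{2\pi i\varphi_q}$, the sum becomes $\sum_q 2|\mu_q|\cos\bigl(2\pi(mp\theta_q+\varphi_q)\bigr)$. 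Then, by Kronecker--Weyl, the orbit closure $G=\overline{\{(\alpha_j^{mp})_{j\ge3}:m\in\N\}}$ is a closed subgroup of the torus; I would show it is connected and contains the two points $\bigl(\pm\,\overline{\mu_j}/|\mu_j|\bigr)_{j\ge3}$, at which the continuous real function $z\mapsto\sum_{j\ge3}\mu_jz_j$ takes the values $\pm\sigma_\ell$, so that by connectedness its image on $G$ is all of $[-\sigma_\ell,\sigma_\ell]=\mathcal{S}_\ell$.

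\emph{The closest-integer statement.} Writing a closest integer to $\lambda\alpha^n$ as $bk+i$ gives $\lambda\alpha^n=bk+i+\delta_n$ with $|\delta_n|=\Vert\lambda\alpha^n\Vert\le\frac12$, hence $\Vert\frac{\lambda\alpha^n-i}{b}\Vert=\frac{|\delta_n|}{b}\le\frac1{2b}$; conversely $\Vert\frac{\lambda\alpha^n-i}{b}\Vert\le\frac1{2b}$ puts some integer $\equiv i\,(b)$ within $\frac12$ of $\lambda\alpha^n$, i.e. makes it a closest integer. Rewriting this in terms of $\Vert\frac{\lambda\alpha^n}{b}\Vert$ is a translation by $\frac ib$ together with $[0,\frac1{2b}]=\Vert[-\frac1{2b},\frac1{2b}]\Vert$ and the evenness of $\Vert\cdot\Vert$ modulo $\Z$. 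If this holds for infinitely many $n$, those $n$ fall infinitely often into a single class $\ell-1$ modulo $p$; along that subsequence $\Vert\frac\lambda b\alpha^{mp+\ell-1}\Vert$ stays in the closed set $\Vert[\frac{2i-1}{2b},\frac{2i+1}{2b}]\Vert$, so a limit point of it lies there, while by the preceding part it also lies in $\Vert\mathcal{L}_\ell\Vert$; hence $\bigl(\bigcup_{\ell}\mathcal{L}_\ell\bigr)\cap\Vert[\frac{2i-1}{2b},\frac{2i+1}{2b}]\Vert\ne\emptyset$.

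\emph{Main obstacle.} All of this is bookkeeping except the reverse inclusion $[-\sigma_\ell,\sigma_\ell]\subseteq\mathcal{S}_\ell$, which reduces to understanding $G$: one must show that the lattice of multiplicative relations among $\alpha_3,\dots,\alpha_d$ is generated by the evident relations $\alpha_j\overline{\alpha_j}=1$, so that $G$ is a connected torus containing the points aligning every $\mu_j\alpha_j^{mp}$ with a positive real. Ruling out any further, parasitic multiplicative relation among the unimodular conjugates of a Salem number — exploiting the reciprocity and irreducibility of $P_\alpha$ and the fact that its only non-unimodular roots are $\alpha^{\pm1}$ — is where the real difficulty lies.
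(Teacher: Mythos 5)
Your proposal follows essentially the same route as the paper's own proof: pure periodicity of the residues via reversibility of the recurrence mod $b$ (a Salem number is a unit, so the constant term of $P_\alpha$ is $\pm1$), the identical decomposition of $\lambda\alpha^n$ into its trace, the $\alpha^{\pm1}$-contributions, and the sum over the unimodular conjugates, and the same bookkeeping for the closest-integer equivalences and the final nonemptiness claim. The step you flag as the ``main obstacle'' is a genuine gap in your write-up, but it is exactly the point where the paper invokes a classical fact: for a Salem number with unimodular conjugates ${\rm e}^{\pm2\pi{\rm i}\theta_j}$, $j=1,\dots,(d-2)/2$, the numbers $1,\theta_1,\dots,\theta_{(d-2)/2}$ are linearly independent over $\Z$ (Salem, \emph{Algebraic numbers and Fourier analysis}, p.~32). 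This is precisely your statement that the lattice of multiplicative relations among $\alpha_3,\dots,\alpha_d$ is generated by the pairings $\alpha_j\overline{\alpha_j}=1$; with it, the Kronecker orbit closure is the full $(d-2)/2$-torus, hence connected and containing the extremizing points you need, giving the reverse inclusion $[-\sigma_\ell,\sigma_\ell]\subseteq\mathcal{S}_\ell$. So you have correctly identified the crux but not closed it: either prove or cite this independence result, and the argument is complete and coincides with the paper's.
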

\begin{proof}
The first part of the statement follows from Lemma~\ref{lemper} and Remark~\ref{rmksalem}, which asserts that 
the sequence of classes $t_n\in\Z/b\Z$ modulo $b$ of $Tr_{k/\Q}(\lambda\alpha^n)$ is purely periodic, with period of length say $p$. 
We represent the elements of $\Z/b\Z$ by the integers $-\lceil b/2\rceil+1,\dots,\lfloor b/2\rfloor$ so that the period of the sequence $(t_n)_{n\in\N}$ gives $i_1,\dots,i_p\in\{-\lceil b/2\rceil+1,\dots,\lfloor b/2\rfloor\}$ satisfying $Tr_{k/\Q}(\lambda\alpha^n)\equiv i_\ell(b)$ for $\ell=n-p\lfloor n/p\rfloor+1$ and $n$ large. We have
\begin{equation}\label{dspropersalem}
\lambda\alpha^{mp+\ell-1} = Tr_{k/\Q}(\lambda\alpha^{mp+\ell-1}) - \sum_{j=3}^{d}\lambda_{\ell,j}\alpha_j^{mp} - \lambda_{\ell,2}\alpha_2^{mp}.
\end{equation}

Let $\ell\in\{1,\dots,p\}$, $\rho\in\Vert\mathcal{L}_\ell\Vert$ and $\gamma\in\mathcal{L}_\ell$ such that $\rho=\Vert\gamma\Vert$. Recall that the conjugates of $\alpha$ other than $\alpha$ and $1/\alpha$ are of the form ${\rm e}^{\pm2\pi{\rm i}\theta_j}$, $j=1,\dots,(d-2)/2$. Since $1,\theta_1,\dots,\theta_{(d-2)/2}$ are linearly independent over $\Z$ (see \cite[page 32]{salem}), it follows from Kronecker's theorem (\cite[Appendix 8]{salem}, see also~\cite[Lemma 1]{zaimi}) that there exists an infinite subset $I\subset\N$ such that the sequence $\left(\sum_{j=3}^d\lambda_{\ell,j}\alpha_j^{mp}\right)_{m\in I}$ converges to $i_\ell-b\gamma$. Since $\lambda_{\ell,2}\alpha_2^{mp}$ tends to $0$ as $m$ goes to $\infty$ and $Tr_{k/\Q}(\lambda\alpha^{mp+\ell-1})\equiv i_\ell(b)$, we deduce from~\eqref{dspropersalem}
$$
\left\Vert\frac{\lambda}{b}\alpha^{mp+\ell-1}\right\Vert = \left\Vert\frac{i_\ell}{b} - \sum_{j=3}^d\frac{\lambda_{\ell,j}}{b}\alpha_j^{mp} -\frac{\lambda_{\ell,2}}{b}\alpha_2^{mp} \right\Vert = \left\Vert\gamma+\varepsilon_m \right\Vert 
$$
for $m\in I$ and where $(\varepsilon_m)_{m\in I}$ converges to $0$ as $m\in I$ goes to $\infty$. Thus the limit point of $\left(\Vert\gamma+\varepsilon_m\Vert\right)_{m\in I}$ is $\Vert\gamma\Vert=\rho$.

Conversely, by~\eqref{dspropersalem} each term of the series $\left(\left\Vert\frac{\lambda}{b}\alpha^{mp+\ell-1}\right\Vert\right)_{m\in\N}$ can be rewritten $\left(\left\Vert\gamma_m\right\Vert\right)_{m\in\N}$ with $\gamma_m=\frac{i_\ell}{b}-\sum_{j=3}^d\frac{\lambda_{\ell,j}}{b}\alpha_j^{mp}-\frac{\lambda_{\ell,2}}{b}\alpha_2^{mp}\in\R$, which satisfies $|b\gamma_m-i_\ell|\le\sum_{j=3}^d|\lambda_{\ell,j}|+|\lambda_{\ell,2}\alpha_2^{mp}|$. The integer closest to $\gamma_m$ can take only finitely many values. Thus, if a subsequence $\left(\left\Vert\gamma_{\smash{m}}\right\Vert\right)_{m\in I}$ ($I\subset\N$ infinite) converges to a limit $\rho$, some subsequence $\left(\gamma_{\smash{m}}\right)_{m\in J}$ ($J\subset I$ infinite) converges to a limit $\gamma$. This limit $\gamma$ satisfies $\Vert\gamma\Vert=\rho$ and $|b\gamma-i_\ell|\le\sum_{j=3}^d|\lambda_{\ell,j}|$, which shows $\rho\in\Vert\mathcal{L}_\ell\Vert$.

If an integer closest to $\lambda\alpha^n$ is written $bN_n+i$, then $\left|\frac{\lambda\alpha^n-i}{b}-N_n\right|\le\frac{1}{2b}$ which entails $\left\Vert\frac{\lambda\alpha^n-i}{b}\right\Vert\le\frac{1}{2b}$. In the other direction, if the latter inequality holds, there exists an integer $N_n$ such that $|\lambda\alpha^n-i-bN_n|\le \frac{1}{2}$ which shows that $bN_n+i$ is an integer closest to $\lambda\alpha^n$. If $\left\Vert\frac{\lambda\alpha^n}{b}\right\Vert\in\left\Vert\left[\frac{2i-1}{2b},\frac{2i+1}{2b}\right]\right\Vert$ for infinitely many $n$, the sequence $\left(\left\Vert\frac{\lambda\alpha^n}{b}\right\Vert\right)_{n\in\N}$ has a limit point in $\left\Vert\left[\frac{2i-1}{2b},\frac{2i+1}{2b}\right]\right\Vert$ which also belongs to $\cup_{\ell=1}^{p}\Vert\mathcal{L}_\ell\Vert$.
%
\end{proof}

The above result naturally leads to the following problem:

\medskip
\noindent
{\bf Problem}: For a Salem number $\alpha$, characterise the algebraic numbers $\lambda \in \Q(\alpha)$ such that the sequence ($\Vert \lambda\alpha^n\Vert $) is dense in $[0,1/2]$.

\begin{rmk}
The distribution of the sequence of fractional parts of $\lambda\alpha^n/b$ in $[0,1]$ is obtained from that of the difference with the nearest integer, by exchanging the subintervals $[-1/2,0]$ and $[0,1/2]$, while the sequence $(\Vert\alpha^n/b\Vert)_{n\in\N}$ is the superposition of these two latter intervals head to tail.

We remind the reader that for a Salem number $\alpha$, the sequence $(\Vert\alpha^n\Vert)_{n\in\N}$ is dense, but not uniformly distributed in $[0,1/2]$. In fact, when $\lambda=1$ and $b\leq 2d-4$ the length of each interval $\mathcal{L}_\ell$ is at least $1$ and thus the sequence of fractional parts of $\alpha^n/b$ is dense in $[0,1]$, as in \cite[Theorem (i)]{zaimi}. Further, Proposition~\ref{propersalem} together with Remark~\ref{rmksalem} also allows us to recover \cite[Theorem (ii) and (iii)]{zaimi}. 

However, when $b>2d-4$ the behaviour of the sequence $(\Vert\alpha^n/b\Vert)_{n\in\N}$ (as well as the sequence of fractional parts) strongly depends on the period $i_1,\dots,i_p$, which is somewhat mysterious. We give below two contrasting examples illustrating this phenomenon.
\end{rmk}
\vskip-2mm

\begin{figure}[ht]
\centerline{\hskip-1cm\includegraphics[width=10cm]{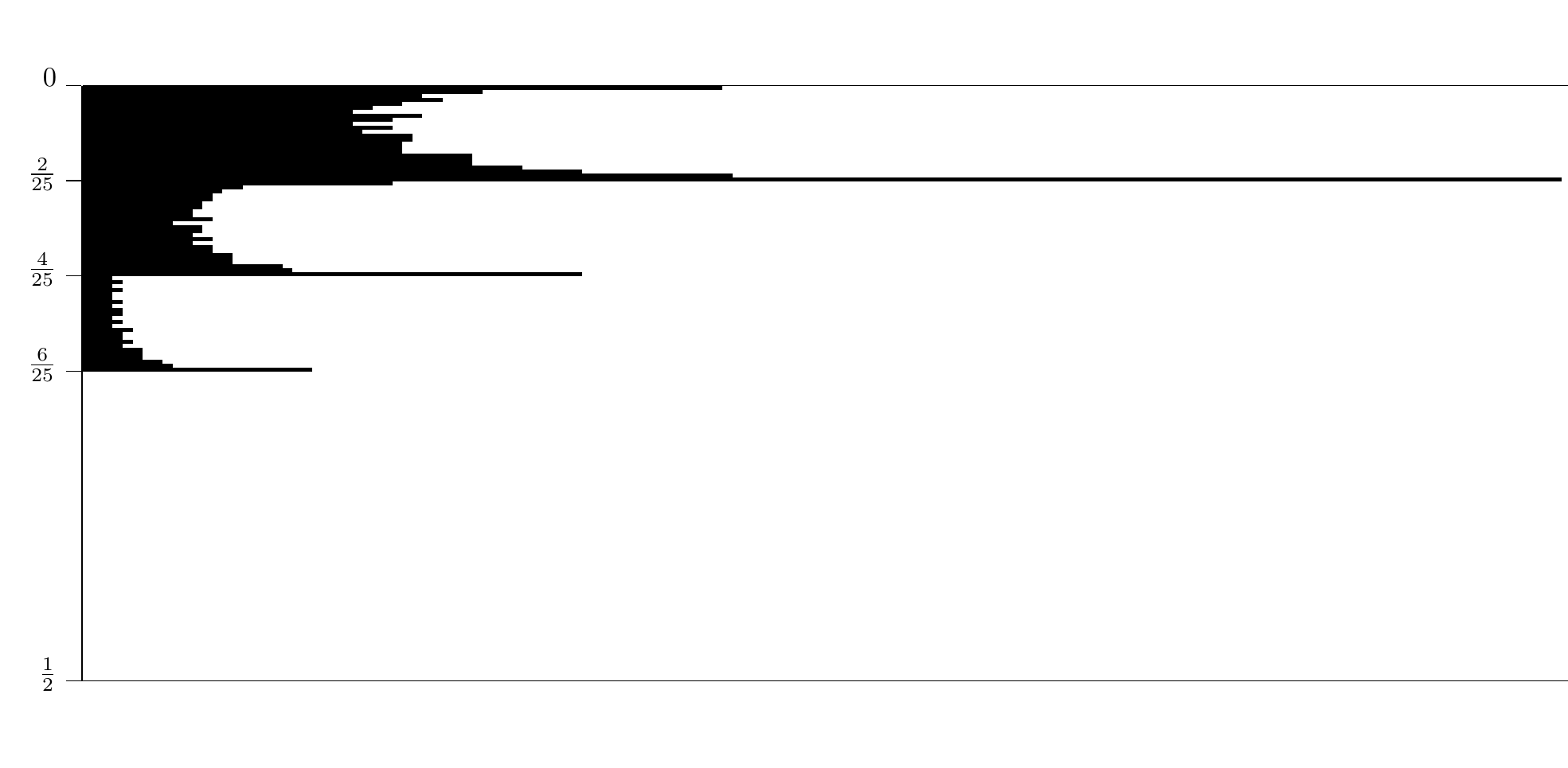}}\vskip-8mm
\caption{Scattering of $\Vert\lambda\alpha^n\Vert$ for $\lambda=1/25$ and $\alpha$ a Salem number, $n\le2500$.}
\label{fig:salemi}
\end{figure}

\begin{figure}[ht]
\centerline{\hskip-1cm\includegraphics[width=10cm]{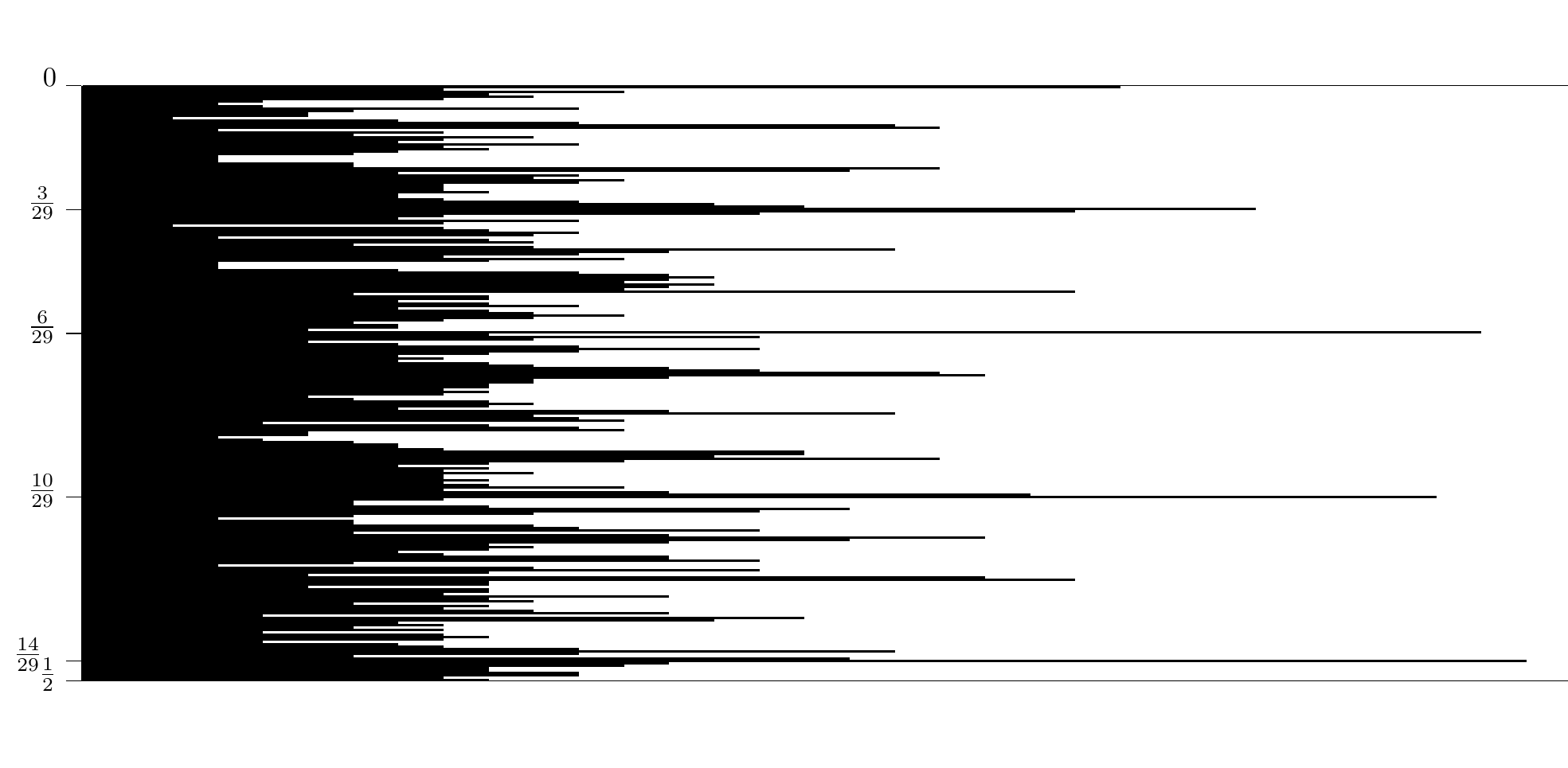}}\vskip-8mm
\caption{Scattering of $\Vert\lambda\alpha^n\Vert$ for $\lambda=1/29$ and $\alpha$ a Salem number, $n\le2500$.}
\label{fig:salemii}
\end{figure}

\begin{exmpl}
Figure~\ref{fig:salemi} shows the distribution of the numbers $\Vert\lambda\alpha^n\Vert$ in the interval $[0,1/2]$ for $n\le2500$, where $\alpha$ is the Salem number root of the polynomial $x^4-25x^3+x^2-25x+1$ and $\lambda=1/25$. The period is $0,-2,0,-2,0,4$ of length $p=6$. This gives the three intervals $[0,2/25]$, $[0,4/25]$ and $[2/25,6/25]$, which cover the whole set of limit points $[0,6/25]$, with special concentrations on the four values $0$, $2/25$, $4/25$ and $6/25$.
\end{exmpl}
\vskip-2mm

Similar pictures are obtained for $\lambda=1/24$ and $1/26$, but in general the distribution of the numbers $\Vert\lambda\alpha^n\Vert$ is dense in $[0,1/2]$ (although not uniformly), as in the next example.

\begin{exmpl}
Figure~\ref{fig:salemii} shows the distribution of the numbers $\Vert\lambda\alpha^n\Vert$ in the interval $[0,1/2]$ for $n\le2500$, where $\alpha$ is again the Salem number root of the polynomial $x^4-25x^3+x^2-25x+1$ and $\lambda=1/29$. The period involves all the 29 integers between $-14$ and $14$. This gives $15$ intervals $[0,2/29]$, $[0,3/29]$, $[(i-2)/29,(i+2)/29]$, $i=2,\dots,14$, which cover the whole interval $[0,1/2]$, with several special concentrations.
\end{exmpl}
We refer to the interested reader the papers {\cite{aki} and \cite{doche}
where other aspects of distribution of powers of
Salem numbers is studied. See also the papers \cite{bugeaud} and \cite{see}
where the set of limit points of the sequences $\{\Vert \alpha^n \Vert^{\frac{1}{n}}\}$
is investigated.

\medskip
We will now use these periodicity properties to prove Theorem \ref{hardy+}
which refines the conclusion of Hardy's Theorem \ref{hardy}.

\medskip
\noindent
{\bf Proof of Theorem \ref{hardy+}.}
\medskip

When $\alpha$ is a PV number and $n$ is large enough, then $\lambda\alpha^n$ is a large real number whereas the sum of its conjugates is of absolute values $<c^n$ for some $0<c<1$. Hence we have that  the difference $|\lambda\alpha^n-Tr_{k/\Q}(\lambda\alpha^n)|<c^n$ tends to $0$ as $n \to \infty$. 
Further, if $\lambda\alpha^n$ belongs to the complementary module of $\Z[\alpha]$ for $n$ large enough, then $Tr_{k/\Q}(\lambda\alpha^n)\in\Z$ and 
hence $(\lambda,\alpha)\in H\cap\overline{\Q}^2$.

In the other direction, it follows from Corollary \ref{corper} that if $(\lambda,\alpha)\in H\cap\overline{\Q}^2$, then  $\alpha$ is a PV number, $\lambda\in\Q(\alpha)$ and $\lambda\alpha^n$ belongs to the complementary module of $\Z[\alpha]$ for $n$ large enough. This latter condition can be rewritten $\lambda\in\Z[\alpha]'\left[\frac{1}{\alpha}\right]=\frac{1}{P'_\alpha(\alpha)}\Z\left[\alpha,\frac{1}{\alpha}\right]$.$\phantom{mannnnnnmma} \Box$

\bigskip

We now give the proof of Theorem \ref{mahler+} which is an analogus result for Mahler sets.

\medskip
\noindent
{\bf Proof of Theorem \ref{mahler+}.}
\medskip

If $(\lambda,\alpha)\in M\cap\overline{\Q}^2$, then $\Vert\lambda\alpha^n\Vert<c^n$ for infinitely many $n$. By Theorem \ref{2}, we know that $\alpha^{s_0}$ is a PV number for some $s_0\in\N^\times$. Also because $(\lambda,\alpha)\in M$, there exists an integer $0\le i<s_0$ such that $0$ is a limit point of the sequence $\left(\Vert\lambda\alpha^{ns_0+i}\Vert\right)_{n\in\N}$. It follows from Corollary \ref{corperM} that there exists integers $1\le \ell\le p$ such that $\lambda\alpha^i\in\frac{1}{\alpha^{s_0(\ell-1)}P'_{\alpha^{ps_0}}(\alpha^{ps_0})}\Z\left[\alpha^{ps_0},\frac{1}{\alpha^{ps_0}}\right]$. Setting $s=ps_0$ and $t=s_0(\ell-1)+i<\ell s_0\le ps_0=s$ proves the assertion, because $\alpha^{s}=\alpha^{ps_0}$ is again a PV number.

Conversely, if $\lambda\alpha^{t}\in\frac{1}{P'_{\alpha^{s}}(\alpha^{s})}\Z\left[\alpha^{s},\frac{1}{\alpha^{s}}\right]$, then for $n$ large enough $Tr_{k/\Q}(\lambda\alpha^{ns+t})\in\Z$. But, since $\alpha^s$ is a PV number, $\Vert\lambda\alpha^{ns+t}\Vert=\left|\lambda\alpha^{ns+t}-Tr_{k/\Q}(\lambda\alpha^{ns+t})\right|<c^n$ for $n$ large enough and thus $(\lambda,\alpha)\in M\cap\overline{\Q}^2$. \hfill$\Box$
\medskip

\noindent
For $s\in\N^\times$ and real $\alpha>1$, let $\alpha^{1/s}$ denote the unique  
real number $\beta>1$ such that $\beta^s=\alpha$. We now have the following
corollary which is a result in the direction of the  fourth question indicated
in the introduction (see page 3).

\begin{cor}\label{corhardy+mahler+}
The following are true.\\
1) $(\lambda,\alpha)\in M\cap\overline{\Q}^2$
if and only if  there exists $s \in \N^\times$ and $ 0\le t<s$
such that $(\lambda\alpha^t,\alpha^s)\in H\cap\overline{\Q}^2$.\\
2) If $(\lambda,\alpha)\in H\cap\overline{\Q}^2,$ then 
for all $s\in\N^\times$ and  $0\le t<s,$ we have $(\lambda\alpha^{-t/s},\alpha^{1/s})\in M\cap\overline{\Q}^2$.\\
3) For any $s \in \N^\times,  (\lambda,\alpha) \in H $
if and only if $(\lambda\alpha^t,\alpha^s)\in H$ for every $0 \le t<s$.\\
4) For any $s\in\N^\times$, $ (\lambda,\alpha)\in M$  if and only if
there exists $0\le t<s$ such that $(\lambda\alpha^t,\alpha^s)\in M$.

\end{cor}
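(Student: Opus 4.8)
The plan is to dispatch parts 3 and 4 directly from the definitions, and to obtain parts 1 and 2 as formal consequences of Theorems \ref{hardy+} and \ref{mahler+}.

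For part 3, I would argue as follows. If $(\lambda,\alpha)\in H$, then for each fixed $t\in\{0,\dots,s-1\}$ the sequence $\bigl(\Vert\lambda\alpha^t(\alpha^s)^m\Vert\bigr)_{m\in\N}=\bigl(\Vert\lambda\alpha^{sm+t}\Vert\bigr)_{m\in\N}$ is a subsequence of $\bigl(\Vert\lambda\alpha^n\Vert\bigr)_{n\in\N}$ and hence tends to $0$; since $\alpha^s>1$ and $\lambda\alpha^t\neq0$, this gives $(\lambda\alpha^t,\alpha^s)\in H$. Conversely, if $(\lambda\alpha^t,\alpha^s)\in H$ for every $0\le t<s$, then $\bigl(\Vert\lambda\alpha^n\Vert\bigr)_{n\in\N}$ tends to $0$ along each of the $s$ residue classes modulo $s$, hence tends to $0$ outright, so $(\lambda,\alpha)\in H$.

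For part 4, if $(\lambda,\alpha)\in M$ I would fix $0<c<1$ and an infinite set $N\subset\N$ with $\Vert\lambda\alpha^n\Vert<c^n$ on $N$; pigeonhole provides a residue $t$ modulo $s$ and infinitely many $n=sm+t$ in $N$, and then $\Vert\lambda\alpha^t(\alpha^s)^m\Vert<c^{sm+t}\le(c^s)^m$ with $c^s\in(0,1)$, so $(\lambda\alpha^t,\alpha^s)\in M$. For the converse I would start from $0<c'<1$ with $\Vert\lambda\alpha^{sm+t}\Vert<(c')^m$ for infinitely many $m$ and set $c=(c')^{1/(2s)}\in(0,1)$; since $0\le t<s$ one has $(c')^m\le(c')^{m/2+t/(2s)}=c^{sm+t}$ for all $m\ge1$, hence $\Vert\lambda\alpha^{sm+t}\Vert<c^{sm+t}$ infinitely often, i.e. $(\lambda,\alpha)\in M$.

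For part 1, in the forward direction Theorem \ref{mahler+} produces integers $s,t$ with $0\le t<s$, $\alpha^s$ a PV number and $\lambda\in\frac{1}{\alpha^tP'_{\alpha^s}(\alpha^s)}\Z\left[\alpha^s,\frac{1}{\alpha^s}\right]$; then $\lambda\alpha^t\in\frac{1}{P'_{\alpha^s}(\alpha^s)}\Z\left[\alpha^s,\frac{1}{\alpha^s}\right]$ and Theorem \ref{hardy+} yields $(\lambda\alpha^t,\alpha^s)\in H\cap\overline{\Q}^2$. In the reverse direction, from $(\lambda\alpha^t,\alpha^s)\in H\cap\overline{\Q}^2$ Theorem \ref{hardy+} says $\alpha^s$ is a PV number (so $\alpha$ is algebraic as a root of $X^s-\alpha^s$, whence $\lambda=(\lambda\alpha^t)/\alpha^t\in\overline{\Q}$) and $\lambda\alpha^t\in\frac{1}{P'_{\alpha^s}(\alpha^s)}\Z\left[\alpha^s,\frac{1}{\alpha^s}\right]$, hence $\lambda\in\frac{1}{\alpha^tP'_{\alpha^s}(\alpha^s)}\Z\left[\alpha^s,\frac{1}{\alpha^s}\right]$, and Theorem \ref{mahler+} gives $(\lambda,\alpha)\in M\cap\overline{\Q}^2$. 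Part 2 would then be deduced from part 1: with $\beta=\alpha^{1/s}$ and $\mu=\lambda\alpha^{-t/s}=\lambda\beta^{-t}$, one has $\beta\in(1,\infty)$ algebraic, $\mu\in\R^\times$ algebraic, and $(\mu\beta^t,\beta^s)=(\lambda,\alpha)\in H\cap\overline{\Q}^2$, so the reverse implication of part 1 applied to the pair $(\mu,\beta)$ gives $(\lambda\alpha^{-t/s},\alpha^{1/s})\in M\cap\overline{\Q}^2$. Since the substance is already carried by Theorems \ref{hardy+} and \ref{mahler+}, I do not expect a genuine obstacle; the only points needing care are the elementary comparison of $c$ and $c'$ in the converse of part 4, the pigeonhole step isolating one good residue class in part 4, and the observation that a root of $X^s-\alpha^s$ stays algebraic, which is what places $\lambda$ in $\overline{\Q}$ in the converse of part 1.
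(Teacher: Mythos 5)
Your proof is correct and follows essentially the same route as the paper: parts 3 and 4 directly from the definitions of $H$ and $M$, and parts 1 and 2 as formal translations of Theorems~\ref{hardy+} and \ref{mahler+} with the substitution $(\mu,\beta)=(\lambda\alpha^{-t/s},\alpha^{1/s})$. The only differences are that you spell out the elementary comparison of exponential rates in the converse of part 4 and the algebraicity of $\alpha$ as a root of $X^s-\alpha^s$ in the converse of part 1, both of which the paper leaves implicit.
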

\begin{proof} Here is the proof of the above statements.

\smallskip
\noindent
 1) By Theorem~\ref{mahler+}, $(\lambda,\alpha)$ belongs to $M\cap\overline{\Q}^2$ if and only if there exists integers $0\le t<s$ such that $\alpha^s$ is a PV number and $\lambda\alpha^t\in\frac{1}{P'_{\alpha^{s}}(\alpha^{s})}\Z\left[\alpha^{s},\frac{1}{\alpha^{s}}\right]$, but these are exactly the conditions in Theorem~\ref{hardy+} for $(\lambda\alpha^t,\alpha^s)$ to belong to $H\cap\overline{\Q}^2$.

\smallskip
\noindent
2) It follows  from the previous assertion. For $(\lambda,\alpha)\in H\cap\overline{\Q}^2$ and
integers $s, t$ with  $0\le t<s$, let $(\mu,\beta)=(\lambda\alpha^{-t/s},\alpha^{1/s})$. Then $(\mu\beta^t,\beta^s)=(\lambda,\alpha)$. By the reverse implication in the above proposition, we get $(\mu,\beta)\in M\cap\overline{\Q}^2$.

\smallskip
\noindent
3) This follows from the definition of $H$ since for $s\in\N^\times$, each of
the following $s$  subsequences $$(\Vert \lambda\alpha^{ms+t}\Vert)_{m\in\N}, \phantom{ma} 0\le t<s$$ converges to $0$ if and only if the sequence $(\Vert \lambda\alpha^n\Vert)_{n\in\N}$ 
converges to $0$.

\smallskip
\noindent
4) For any $s\in\N^\times$ and  $(\lambda,\alpha)\in M$, there exists an integer $0\le t<s$ such that $\Vert\lambda\alpha^{ms+t}\Vert<c^m$ for infinitely many $m\in\N$ and thus $(\lambda\alpha^t,\alpha^s)\in M$. Reciprocally, if there exists an integer $0\le t<s$ such that $(\lambda\alpha^{t},\alpha^{s})\in M$, then $\Vert\lambda\alpha^{ms+t}\Vert<c^m$ for infinitely many $m\in\N$ and hence $(\lambda,\alpha)\in M$.
\end{proof}

\bigbreak

\smallskip
\noindent
{\bf Acknowledgments.} This work was initiated  during the visits
of the  second author to University of Paris VI and the first at IMSc. Both authors are grateful for the support under the  MODULI program as well as the Indo-French Program for Mathematics (belonging to CNRS) which facilitated these visits.

\bigskip

\address
[Patrice Philippon]
{Institut de Math\'ematiques de Jussieu,
UMR 7586 du CNRS,
4, place Jussieu,
75252 PARIS Cedex 05,
France.}\\
Email: {patrice.philippon@imj-prg.fr}
     
\address[Purusottam Rath]
     {Chennai Mathematical Institute,
     Plot No H1, SIPCOT IT Park,
     Padur PO, Siruseri 603103,
     Tamil Nadu, 
      India.}   \\
Email: {rath@cmi.ac.in}

\begin{thebibliography}{100}

\bibitem{aki} S. Akiyama and Y. Tanigawa,
{\it Salem numbers and uniform distribution modulo 1,}
Publ. Math. Debrecen 64 (2004), no. 3-4, 329-341. 

\bibitem{bugeaud} Y. Bugeaud and A.  Dubickas,
{\it On a problem of Mahler and Szekeres on approximation by roots of integers.}
Michigan Math. J. 56 (2008), no. 3, 703-715. 

\bibitem{cz}  P. Corvaja and U. Zannier, {\it On the rational approximations to the powers of an algebraic number: solution of two problems of Mahler and Mend\`es France}, Acta Math. {\bf 193} (2004), no. 2, 175-191.

\bibitem{de} B. de Smit, {\it Algebraic numbers with integral power traces}, J. Number Theory {\bf 45} (1993), no. 1, 112-116. 


\bibitem{doche} C. Doche, M. Mend\`es France and J.J. Ruch,
{\it Equidistribution modulo 1 and Salem numbers,}
Funct. Approx. Comment. Math. 39 (2008), part 2,

\bibitem{dubickas} A. Dubickas, {\it Integer parts of powers of Pisot and Salem numbers}, Arch. Math. {\bf 79} (2002), 252-257. 

\bibitem{hardy}  G. H. Hardy, {\it A problem of Diophantine approximation}, J. Indian Math. Soc. {\bf 11}, (1919), 205-243.

\bibitem{lang} S. Lang, {\it Algebraic Number Theory}, Graduate Texts in Math. {\bf 110}, Spinger, New York (1986).
 
\bibitem{langalgebra} S. Lang, {\it Algebra,} Revised Third Edition, Graduate Texts in Math. {\bf 211}, Spinger, New York (2005).
 
 \bibitem{mahler} K. Mahler, {\it On the fractional parts of the powers of a rational number, II }, Mathematika {\bf 4} (1957), 122-124.
 
 
\bibitem{see}  K. Mahler and G. Szekeres, {\it On the approximation of real numbers by roots of integers,} Acta Arith. 12 (1967), 315-320. 
 
 \bibitem{salem} R. Salem, {\it Algebraic numbers and Fourier analysis}, D. C. Heath and Co., Boston, Mass. (1963).

\bibitem{schmidt} W. M. Schmidt, {\it Diophantine Approximations and Diophantine Equations}, Lecture Notes in Math. {\bf 1467}, Springer (1991).

\bibitem{cetraro} W. M. Schmidt, {\it Linear Recurrence Sequences} in Diophantine Approximation, Lectures given at the C.I.M.E. Summer School held in Cetraro, Italy, June 28 - July 6, 2000, F. Amoroso \& U. Zannier eds, Lecture Notes in Math. {\bf 1819}, Springer (2003), 171-247.

\bibitem{zaimi} T. Zaimi, {\it On integer and fractional parts of powers of Salem numbers}, Arch. Math. {\bf 87} (2006), 124-128. 

\end{thebibliography}
\end{document}